\newtheorem{theorem}{Theorem}[section]
\newtheorem{thm}[theorem]{Theorem}
\newtheorem{prop}[theorem]{Proposition}
\newtheorem{lem}[theorem]{Lemma}
\newtheorem{cor}[theorem]{Corollary}
\theoremstyle{remark}
\theoremstyle{definition}
\newtheorem{defn}[theorem]{Definition}
\theoremstyle{definition}
\theoremstyle{remark}
\newcommand{\FillRad}{\operatorname{FillRad}}
\newcommand{\dil}{\operatorname{dil}}
\newcommand{\diam}{\operatorname{diam}}
\newcommand{\Vol}{\operatorname{Vol}}
\DeclareMathOperator{\Nil}{{\rm Nil}^3}
\DeclareMathOperator{\Hig}{Hig} 
\newcommand{\N}{\mathbb{ N}}
\newcommand{\Z}{\mathbb{ Z}}
\newcommand{\Q}{\mathbb{ Q}}
\newcommand{\R}{\mathbb{ R}}
\newcommand{\rk}{\operatorname{rk}}
\newcommand{\id}{\operatorname{id}}
\global\let\c@equation=\c@theorem}
\date{\today}
\keywords{metric largeness, group homology}
\subjclass[2000]{Primary 53C23; Secondary 20J06}
\begin{document}

\title{Large and small group homology}

\author{Michael Brunnbauer}
\address{Mathematisches Institut, Ludwig-Maximilians-Universit\"at M\"unchen, Theresienstr.\ 39, 
D-80333 M\"unchen, Germany}
\email{michael.brunnbauer@mathematik.uni-muenchen.de}

\author{Bernhard Hanke}
\address{Zentrum Mathematik, Technische Universit\"at M\"unchen, Boltzmannstr. 3, D-85748 Garching, Germany}
\email{hanke@ma.tum.de}

\begin{abstract}
For several instances of metric largeness like enlargeability 
or having hyperspherical universal covers, we 
construct non-large vector subspaces in the rational 
homology of finitely generated groups. The functorial properties of 
this construction imply that the corresponding largeness 
properties of closed manifolds depend only on 
the image of their fundamental classes under the 
classifying map. 

This is applied to construct examples of essential manifolds 
whose universal covers are not hyperspherical, thus answering 
a question of Gromov (1986), and, more generally,  essential manifolds which are not enlargeable. 

\end{abstract}

\maketitle


\section{Introduction}

Gromov and others \cite{Gromov(1986), GL(1980), GL(1983)} introduced notions of 
largeness for Riemannian manifolds. These include enlargeability and having 
hypereuclidean or hyperspherical universal covers, and universal covers with 
infinite filling radii. 
While precise definitions are given later in this paper, we point out 
that in spite of their reference to Riemannian metrics these 
properties are independent of the chosen metrics in the case of 
closed manifolds.   

In this paper we will elaborate on the topological-homological nature of several largeness 
properties of closed manifolds and more generally of homology classes of simplicial complexes with 
finitely generated fundamental groups.

To explain some of our results in greater detail, let us here recall the definition of enlargeability
(cf. \cite{GL(1983)}).  In this paper all manifolds are assumed to be smooth and connected unless otherwise stated.  

\begin{defn} Let $M$ be a closed 
orientable manifold of dimension $n$, and let $g$ be a Riemannian metric on $M$. Then $(M,g)$ is called 
\emph{enlargeable}, 
if for every $\varepsilon>0$ there is a Riemannian cover $\overline{M}_\varepsilon \to M$ and an 
$\varepsilon$-contracting (i.\,e.\ $\varepsilon$-Lipschitz) map $\overline{M}_\varepsilon \to S^n$ 
to the $n$-dimensional unit sphere which is constant outside a compact set and of nonzero degree. 
\end{defn} 

On closed manifolds all Riemannian metrics are in bi-Lipschitz correspondence and hence the 
described property does not depend on the particular choice of $g$. However, it is important
that $g$ remains fixed for the different choices of $\varepsilon$. Examples of enlargeable manifolds 
include tori and more generally manifolds admitting Riemannian metrics of nonpositive 
sectional curvature.

\begin{defn}[\cite{Gromov(1982)}]
Let $M$ be a closed oriented manifold with fundamental class $[M]$, and let $\Phi:M\to B\pi_1(M)$ be the classifying map
of the universal cover. We call  $M$ \emph{essential} if $\Phi_*[M]\neq 0\in H_*(B\pi_1(M);\mathbb{Q})$.
\end{defn} 

In \cite{HS(2006)} it was shown by index theoretic methods that 
enlargeable manifolds are essential, if the cover $\overline{M}_{\varepsilon} \to M$ can always be 
assumed to be finite. 
Relying on ideas from coarse geometry, \cite[Corollary 1.3]{HKRS(2007)} states
essentialness for manifolds with  
hyperspherical universal covers (these are manifolds where $\overline{M}_\varepsilon$ 
may always be chosen as the universal cover).  Elaborating on the methods from \cite{HS(2006)}, 
essentialness of all enlargeable manifolds was proven in \cite{HS(2007)}.

Extending the results of \cite{HKRS(2007), HS(2006), HS(2007)} we will show that manifolds which 
are either enlargeable or have universal covers which are (coarsely) hyperspherical, (coarsely) hypereuclidean or 
macroscopically large (these notions will be defined in Section \ref{largeriem}) are essential. 
Our approach is independent of index theory or coarse geometry (unless
the largeness condition under consideration refers to coarse notions). 

We will moreover prove that each of these largeness properties
depends only on the image $\Phi_*[M] \in H_*(B \pi_1(M) ; \mathbb{Q})$ 
of the fundamental class under the classifying map. This property may be  
called \emph{homological invariance of largeness}. 

Both conclusions are implied by the following theorem which will be proved in Section \ref{essential_and_homol_inv}
of our paper. 

\begin{thm}\label{intro_p4_subsp_thm}
Let $\Gamma$ be a finitely generated discrete group and let $n$ be a natural number. 
Let $P$ denote one of the properties of being enlargeable or having a universal cover which 
is (coarsely) hyperspherical, (coarsely) hypereuclidean or macroscopically large, there is a rational vector subspace 
\[
   H_n^{\rm sm(P)}(B\Gamma; \mathbb{Q}) \subset H_n(B\Gamma;\mathbb{Q})
\]
of ``non-large'' (i.\,e.\ small) classes with respect to $P$.  In the case when $\Gamma$ is finitely presentable, the following holds: 
If $M$ is a closed oriented $n$-dimensional manifold with fundamental 
group $\Gamma$ and classifying map $\Phi:M\to B \Gamma$, then 
$\Phi_* [M] \in H_n^{\rm sm(P)}(B\Gamma;\Q)$ if and only if $M$ is not large in the respective sense.
\end{thm}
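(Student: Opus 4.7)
The plan is to define $H_n^{\mathrm{sm}(P)}(B\Gamma;\mathbb{Q})$ as the $\mathbb{Q}$-linear span in $H_n(B\Gamma;\mathbb{Q})$ of classes of the form $f_*[N]$, where $(N,f)$ ranges over pairs consisting of a closed oriented $n$-cycle $N$ (a pseudomanifold, or a smooth manifold, equipped with the Riemannian-type structure on which property $P$ can be tested) which is \emph{not} large with respect to $P$, and a continuous map $f\colon N\to B\Gamma$. This is manifestly a $\mathbb{Q}$-vector subspace, and it is functorial in $\Gamma$: a group homomorphism $\phi\colon \Gamma\to\Gamma'$ induces $(N,f)\mapsto (N,B\phi\circ f)$, which preserves the underlying cycle and hence its non-largeness.

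The ``only if'' direction of the iff is immediate: if $M$ is non-large, then $(M,\Phi)$ is itself a non-large representing cycle, so $\Phi_*[M]\in H_n^{\mathrm{sm}(P)}$.

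For the ``if'' direction, assume $\Phi_*[M]=\sum q_i (f_i)_*[N_i]$ with each $N_i$ non-large and $q_i\in\mathbb{Q}$. After clearing denominators, a disjoint union of sufficiently many copies of $M$ and a signed disjoint union of the $N_i$ represent the same class in $H_n(B\Gamma;\mathbb{Z})$ modulo torsion. Lifting this equality from homology to oriented bordism via the rational splitting
\[
   \Omega^{\SO}_n(B\Gamma)\otimes\mathbb{Q} \;\cong\; \bigoplus_{i+4j=n} H_i(B\Gamma;\mathbb{Q})\otimes \Omega^{\SO}_{4j}\otimes\mathbb{Q},
\]
one obtains, after a further integer multiplication, a compact oriented bordism $W$ over $B\Gamma$ between the disjoint union of copies of $M$ and a disjoint union consisting of the $N_i$ (with multiplicities) together with correction terms $R_j\times L_j$, where $L_j$ is a closed oriented manifold of positive dimension divisible by $4$. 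The $L_j$ can be chosen from a standard family (such as products of $\C P^{2k}$) that is non-large for $P$; combined with a careful choice of representatives and an induction on dimension whose inductive hypothesis is the theorem itself in lower dimensions, one arranges the correction terms to be non-large. Since non-largeness is closed under disjoint union (for enlargeability: a contracting map on a disjoint union of nonzero degree must have nonzero degree on some component), the entire boundary of $W$ opposite to the copies of $M$ is non-large.

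The heart of the argument, and the main obstacle, is to establish that non-largeness with respect to $P$ is preserved under oriented cobordism over $B\Gamma$ in the situation arising above (so crucially when one side is the classifying map of a manifold $M$ with $\pi_1(M)=\Gamma$). For enlargeability this is a direct geometric argument: an $\varepsilon$-contracting nonzero-degree map from a cover of one boundary component extends, using the reference map $W\to B\Gamma$ and a collar, to the corresponding cover of $W$; compactness of $W$ controls the Lipschitz distortion uniformly, and restriction to the other boundary component produces a map of the same nonzero degree, because the two boundaries together bound $W$. Property-specific variants of this extension/restriction handle the (coarsely) hyperspherical, (coarsely) hypereuclidean, and macroscopically large universal cover cases. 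The finite presentability of $\Gamma$ enters through a surgery-below-middle-dimension step that lets any representing cycle of a class in $H_n(B\Gamma;\mathbb{Q})$ be replaced by a closed $n$-manifold with fundamental group $\Gamma$ without affecting non-largeness---this is what pins the ``if'' direction down to the specific manifold $M$ in the statement.
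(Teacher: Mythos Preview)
Your definition of $H_n^{\mathrm{sm}(P)}$ as the span of all $f_*[N]$ with $N$ non-large and $f$ \emph{arbitrary} is too large for the properties referring to the universal cover, and this breaks the ``if'' direction. Here is a concrete failure drawn from the paper itself. Let $\Gamma=\Z^n$ with $n\ge 4$ and let $P$ be ``universal cover hyperspherical''. The paper constructs (Theorem~\ref{interesting} and the paragraph after it) closed oriented $n$-manifolds $N$ with $\pi_1(N)=\Z^k$, $k>n$, which are essential but whose universal covers are \emph{not} hyperspherical; by construction the classifying map $N\to T^k$ followed by the projection $T^k\to T^n$ onto the first $n$ coordinates sends $[N]$ to the generator $[T^n]\in H_n(T^n;\Q)$. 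Thus $[T^n]$ lies in your span of non-large pushforwards. But $T^n$ itself has $\pi_1=\Gamma$ and hyperspherical universal cover $\R^n$, so with the correct subspace one must have $\Phi_*[T^n]=[T^n]\notin H_n^{\mathrm{sm}(P)}$. The same example shows that your surgery step ``without affecting non-largeness'' is false: performing surgery on $N$ over $T^n$ to force $\pi_1=\Z^n$ produces a manifold whose (now equivariant) classifying map lifts to a proper Lipschitz degree-one map $\widetilde N'\to\R^n$, so $\widetilde N'$ \emph{is} hyperspherical. In short, for the universal-cover properties largeness is genuinely sensitive to which cover one tests, and allowing representatives with the ``wrong'' $\pi_1$ contaminates the span.

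The paper avoids this entirely by defining largeness of a class $c\in H_n(X;\Q)$ intrinsically: one fixes a finite connected $\pi_1$-\emph{surjective} subcomplex $S\subset X$ carrying $c$ and tests the relevant property on covers of $S$ (Definition~\ref{enlhom}); the technical core is an extension lemma (Lemma~\ref{extend}) showing this is independent of $S$. Functoriality under $\pi_1$-isomorphisms (Proposition~\ref{funktoreins}) then gives the ``iff'' for manifolds immediately, and a short pigeonhole argument (Theorem~\ref{subsp_thm}) shows the non-large classes form a subspace. Your bordism-extension sketch for enlargeability, if completed, would need precisely this extension-lemma control of Lipschitz constants over infinitely many lifted cells, so even in that case you are not bypassing the paper's main work; and your treatment of the bordism correction terms $R_j\times L_j$ via ``induction on dimension'' is not made precise.
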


In Definition \ref{enlhom}  we will introduce largeness properties in general 
for homology classes of connected simplicial complexes with finitely generated fundamental groups
by adapting the classical definitions accordingly. The hard part of 
this approach lies in the verification that the required properties depend on the 
given  homology class only. In this respect  homological 
invariance of largeness is built into the definition right away. Once
this has been achieved it will be easy to show that the classes which are ``small'', i.\,e.\ not large in the respective sense, 
enjoy the nice algebraic property of forming a vector subspace, see Theorem \ref{subsp_thm}.   

If the simplicial complex in question is the classifying space of a finitely generated group, this 
approach emphasizes our point of view that the largeness properties in Theorem \ref{intro_p4_subsp_thm}
should be regarded as metric properties of finitely generated groups (respectivelly their rational homology) 
much like the quasi-isometry type of the word metric itself. 

Homological invariance of largeness for the classical 
case of closed manifolds is a simple consequence of the functorial properties 
for large homology classes proven in Proposition \ref{funktoreins}. Together with 
the fact that the non-large classes form a vector subspace (and hence contain the class $0$ in each degree), 
this shows  indeed that enlargeable manifolds and more generally manifolds with 
(coarsely) hyperspherical, (coarsely) hypereuclidean or macroscopically large universal 
covers are essential. 

In Section \ref{bspiele} we will illustrate by examples that the subspaces $H_*^{\rm sm(P)}$ 
are in general different from zero and may even 
depend on the specific largeness property $P$. In particular we will prove the following 
consequence of Theorem \ref{intro_p4_subsp_thm}:

\begin{thm} \label{example1} For all $n \geq 4$ there are enlargeable (hence essential) manifolds of dimension $n$ 
whose universal covers are neither (coarsely) hyperspherical, (coarsely) hypereuclidean nor macroscopically large. 
\end{thm}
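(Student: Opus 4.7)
The plan is to derive the theorem from Theorem \ref{intro_p4_subsp_thm} together with an explicit construction carried out in Section \ref{bspiele}. By Theorem \ref{intro_p4_subsp_thm}, for each largeness property $P$ in the list, a closed oriented $n$-manifold $M$ with finitely presentable fundamental group $\Gamma$ is $P$-large if and only if $\Phi_*[M] \notin H_n^{\rm sm(P)}(B\Gamma;\Q)$. Hence it suffices to exhibit a finitely presentable group $\Gamma$ and a rational class $\alpha\in H_n(B\Gamma;\Q)$ that lies in $H_n^{\rm sm(P)}$ for every $P$ in the list \emph{except} enlargeability, and then to realize a nonzero integer multiple of $\alpha$ as the image of a closed oriented manifold's fundamental class under its classifying map.

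The essential input is therefore the construction of the pair $(\Gamma,\alpha)$. The room for such an example stems from a structural asymmetry between the definitions: enlargeability permits the cover $\overline{M}_\varepsilon$ to vary with $\varepsilon$, whereas (coarsely) hyperspherical, (coarsely) hypereuclidean and macroscopically large all refer to a single fixed cover (the universal one, or its coarse analogue). One therefore looks for groups $\Gamma$ whose finite quotients detect enlargeability by descent from a classical enlargeable model (such as a torus or a nilmanifold), while the universal cover of $B\Gamma$, or of any closed manifold with $\pi_1=\Gamma$ realizing $\alpha$, has controlled macroscopic geometry---for instance finite filling radius, or the absence of $\varepsilon$-small-dilation nonzero-degree maps to $S^n$ for sufficiently small $\varepsilon$.

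Granted such $(\Gamma,\alpha)$, I would realize a nonzero integer multiple of $\alpha$ as $\Phi_*[M]$ for a closed oriented $n$-manifold $M$ as follows. By Thom's theorem on representability of rational homology classes, there exist a closed oriented $n$-manifold $X$ and a map $\varphi\colon X\to B\Gamma$ with $\varphi_*[X]=k\alpha$ for some nonzero integer $k$. Since $n\geq 4$, surgery below the middle dimension on $\varphi$ converts $X$ into a closed oriented $n$-manifold $M$ with $\pi_1(M)\iso\Gamma$ and classifying map $\Phi$ satisfying $\Phi_*[M]=k\alpha$; the dimensional hypothesis $n\geq 4$ enters precisely to provide room for this $\pi_1$-surgery. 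Theorem \ref{intro_p4_subsp_thm} then immediately yields that $M$ is enlargeable but has universal cover which is neither (coarsely) hyperspherical, (coarsely) hypereuclidean, nor macroscopically large; essentialness of $M$ is inherited from enlargeability as observed in the introduction.

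The main obstacle is the construction of $(\Gamma,\alpha)$ with the prescribed dichotomy. The difficulty is that one must simultaneously certify $P$-largeness when $P$ is enlargeability (a positive statement, requiring many finite covers together with small-dilation, nonzero-degree maps to spheres) and rule out largeness in the four other senses (a negative statement, asserting the \emph{absence} of any such geometric witness on a single fixed cover). Both statements must be made at the level of the group $\Gamma$ and the class $\alpha$, without reference to any particular manifold realization; this is precisely the situation that the subspace formalism and the functoriality in Proposition \ref{funktoreins}, developed in the preceding sections, are designed to handle.
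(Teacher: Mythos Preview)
Your framework is correct and matches the paper: reduce to finding a pair $(\Gamma,\alpha)$ with $\alpha\notin H_n^{\rm sm(enl)}(B\Gamma;\Q)$ but $\alpha\in H_n^{\rm sm(P)}(B\Gamma;\Q)$ for the other properties, then realize a nonzero multiple of $\alpha$ by a closed manifold via surgery. The realization step via Thom plus $\pi_1$-surgery in dimension $\geq 4$ is fine; the paper instead writes down an explicit model $T^n\,\sharp\,(\sharp^{k-n}(S^1\times S^{n-1}))$ and then surgers the kernel of the abelianization map, but your argument works just as well.

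The genuine gap is that you never construct $(\Gamma,\alpha)$; you only describe the tension such a pair must resolve and then stop. The paper's choice is surprisingly simple: take $\Gamma=\Z^k$ with $k>n$ and let $\alpha$ be any nonzero class in $H_n(T^k;\Q)$. The two halves are then verified by direct computation (Theorem~\ref{calculation}). For the non-enlargeable-type properties, the universal cover is $\R^k$, and Poincar\'e duality gives $H_n^{\rm lf}(\R^k;\Q)\cong H^{k-n}(\R^k;\Q)=0$ for $n<k$; hence the transfer $p^!(\alpha)$ already vanishes, so $\alpha$ is automatically small for every property that refers only to the universal cover. For enlargeability, one uses the \emph{intermediate} cover $\R^n\times T^{k-n}\to T^k$ (not the universal cover): project to $\R^n$ and compose with an $\varepsilon$-contracting degree-nonzero map $\R^n\to S^n$; this detects exactly one basis element $t_{1,\ldots,n}$ and shows every nonzero class is enlargeable.

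Two remarks on your heuristics. First, your phrase ``finite quotients detect enlargeability'' is off: the relevant covers here are infinite (indeed $\R^n\times T^{k-n}$), and the point is precisely that enlargeability allows covers other than the universal one. Second, no delicate filling-radius or coarse estimates are needed on the negative side; the vanishing of $H_n^{\rm lf}(\R^k;\Q)$ for $n<k$ kills everything at once.
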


Gromov asked in \cite[page 113]{Gromov(1986)}  whether universal covers of essential 
manifolds were always hyperspherical. Theorem \ref{example1} gives a negative answer. It also 
shows the remarkeable fact that enlargeable manifolds do not always have hyperspherical 
universal covers. This provides a late justification for the organisation of an 
argument in  \cite{HKRS(2007)}: In that paper the proof that enlargeable manifolds are Baum-Connes essential 
was much easier for manifolds with hyperspherical universal covers (these 
were called {\em universally enlargeable} in \cite{HKRS(2007)}) than for 
general enlargeable manifolds. Now we see that the general case cannot be 
reduced to the case of manifolds with hyperspherical universal covers.    

By a refinement of our methods we also get the following result, showing that even the 
more flexible property of enlargeability is not always implied by essentialness: 

\begin{thm} \label{example2} For all $n \geq 4$, there are $n$-dimensional closed manifolds which are essential, but not 
enlargeable. 
\end{thm}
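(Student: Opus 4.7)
The plan is to reduce the theorem to the construction of a single nontrivial small class in group homology.

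\medskip

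\noindent\emph{Reduction.} It suffices to exhibit a finitely presentable group $\Gamma$ and a nonzero class $\alpha \in H_n^{\rm sm(enl)}(B\Gamma;\Q)$. By Thom's theorem on rational realizability, some nonzero integer multiple $k\alpha$ equals $f_*[N]$ for a closed oriented smooth $n$-manifold $N$ and a continuous map $f\colon N \to B\Gamma$. Since $\Gamma$ is finitely presentable and $n \geq 4$, we may perform connect sums with $S^1 \times S^{n-1}$ to make $f_*\colon \pi_1 N \to \Gamma$ surjective, and then surgeries on embedded $1$- and $2$-spheres to kill the (now finitely normally generated) kernel of $f_*$ and install the relations of $\Gamma$. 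Each surgery extends over the corresponding handle mapped into $B\Gamma$, so the pushforward of the fundamental class is unchanged. The resulting closed oriented $n$-manifold $M$ has $\pi_1 M = \Gamma$, and its classifying map $\Phi$ satisfies $\Phi_*[M] = k\alpha$. Since $k\alpha \neq 0$, $M$ is essential; since $H_n^{\rm sm(enl)}(B\Gamma;\Q)$ is a $\Q$-vector subspace by Theorem~\ref{intro_p4_subsp_thm}, the class $k\alpha$ still lies in it, so Theorem~\ref{intro_p4_subsp_thm} implies that $M$ is not enlargeable.

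\medskip

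\noindent\emph{Construction of $(\Gamma, \alpha)$.} This is the heart of the proof and refines the construction underlying Theorem~\ref{example1}. There one obtains a finitely presentable group $\Gamma_0$ carrying a homology class that is small with respect to the coarse largeness notions but still large with respect to enlargeability. To upgrade, we pass to a quotient $\Gamma = \Gamma_0/K$ by a carefully chosen normal subgroup $K$, so that (i) the image $\alpha$ of the original class under $H_n(B\Gamma_0;\Q) \to H_n(B\Gamma;\Q)$ is nonzero; (ii) by functoriality of smallness (Proposition~\ref{funktoreins}), smallness with respect to the other largeness notions is inherited from $\Gamma_0$; and (iii) the combinatorial geometry of $\Gamma$ precludes, uniformly in $\varepsilon > 0$, the existence of any Riemannian cover of a manifold representing $\alpha$ equipped with an $\varepsilon$-contracting degree-nonzero map to $S^n$.

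\medskip

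\noindent\emph{Main obstacle.} The crux is arranging (iii). The coarse largeness notions depend only on the universal cover of $M$ and are therefore destroyed by a single large-scale rigidity condition imposed on $B\Gamma$; enlargeability, by contrast, allows an arbitrary sequence of Riemannian covers $\overline{M}_\varepsilon$, so one must enforce \emph{uniform} rigidity across the entire lattice of subgroups of $\Gamma$. Organizing the group-theoretic structure of the quotient $\Gamma$ so as to achieve this uniform rigidity, while simultaneously preserving the nontriviality of $\alpha$ and the smallness properties inherited from $\Gamma_0$, is the technical heart of the refinement; once this is in place, the reduction above delivers the asserted essential, non-enlargeable manifold $M$.
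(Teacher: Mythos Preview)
Your reduction is correct and matches the paper's endgame: represent a nonzero small class by a manifold via Thom, then surger to make the classifying map a $\pi_1$-isomorphism, and invoke Theorem~\ref{intro_p4_subsp_thm}.

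However, your ``Construction of $(\Gamma,\alpha)$'' is not a construction; it is a description of what a construction would have to accomplish. You never specify the normal subgroup $K$, never verify (i), and never indicate any mechanism for (iii). This is precisely where all the content of the theorem lies, and you have stopped before providing it.

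Worse, the route you suggest cannot succeed. The group $\Gamma_0$ underlying Theorem~\ref{example1} is $\Z^k$ (see Theorem~\ref{calculation} and the construction following Theorem~\ref{interesting}), so any quotient $\Gamma = \Gamma_0/K$ is a finitely generated abelian group, and $B\Gamma$ is rationally a torus. But Theorem~\ref{calculation} shows that \emph{every} nonzero class in $H_*(T^m;\Q)$ is enlargeable, so no such quotient can carry a nonzero class in $H_*^{\rm sm(enl)}$. (As an aside, your item (ii) is both irrelevant to the theorem and incorrectly argued: Proposition~\ref{funktoreins} gives the coarse largeness equivalence only for $\pi_1$-isomorphisms, not for $\pi_1$-surjections.)

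The paper's construction is not a quotient of the Theorem~\ref{example1} group at all. The key missing idea is to bring in a group with \emph{no proper subgroups of finite index}: the Higman $4$-group $\Hig$. One forms $K = \Hig *_{\langle z\rangle} \Hig$ (still with no proper finite-index subgroups, and with $H_*(K;\Z)\cong H_*(S^2;\Z)$), builds a central extension $1\to\Z\to L\to K\to 1$ by a generator of $H^2(K;\Z)$, and sets $G = (N\times L)/\Z$ where $N$ is the integer Heisenberg group and $\Z$ is embedded diagonally in the centers. The crucial group-theoretic fact (Theorem~\ref{besonders}) is that any subgroup $H\subset G$ surjecting onto $K$ must contain the central generator $e$; this is what gives the \emph{uniform} control over all covers that you correctly identified as the obstacle. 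Combined with a geometric lemma (Lemma~\ref{referee}) showing that covers of $\Nil/N$ whose fundamental group contains $e$ admit no $\varepsilon$-contracting degree-nonzero map to $S^3$ for a fixed $\varepsilon$, one proves (Theorem~\ref{nonenl}) that $H_3(G;\Q)\cong\Q$ consists entirely of non-enlargeable classes; taking $\Gamma = G\times\Z^{n-3}$ handles $n\ge 4$.
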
 

Because arbitrary covers of these manifolds need to be controlled, the proof of this  
result is technically much harder than the one of Theorem \ref{example1}. Our 
argument makes essential use of the Higman $4$-group \cite{Hig}. 
 
These examples are interesting because enlargeability is the 
most flexible of the largeness properties considered in this paper 
in terms of which coverings of the given manifold can be used. 
In this respect they point to a principal limitation of the use  of 
largeness of closed manifolds for proving the strong Novikov conjecture in full generality,
cf. \cite{HKRS(2007), HS(2006)}. 

We remark however that none of our 
examples appearing in Theorem \ref{example1} or \ref{example2} is aspherical. 
In the case of aspherical manifolds there is apparently room 
to use metric largeness properties in order to prove general theorems on the non-existence 
of positive scalar curvature metrics and related properties, notably if the 
fundamental group has finite asymptotic dimension \cite{Dranish1, Dranish2}. 

The characterization of metric largeness by certain subspaces 
of group homology has been remarked before in the context of vanishing simplicial 
volume \cite[Section 3.1]{Gromov(1982)} and -- in some more restricted setting -- in the context of 
positive scalar curvature metrics on high-dimensional manifolds with 
non-spin universal covers \cite{RosenbergStolz}.  Recently \cite{Br} the first author 
of the present article showed that the systolic constant, the minimal volume entropy, and the 
spherical volume of closed manifolds only depend on the image of their fundamental 
classes in the integral homology of their fundamental groups under the classifying maps 
of their universal covers.  

In the definition of enlargeability  the maps $\overline{M}_\varepsilon\to S^n$ are assumed to contract distances. 
If they are only required to contract volumes of $k$-dimensional submanifolds, 
then $M$ is called \emph{$k$-enlargeable}. In the case $k=2$, this property is also called 
\emph{area-enlargeability} \cite{GL(1983)}.

Relying on index theory it was shown in \cite{HS(2007)} that area-enlargeable manifolds are  
essential. In Section \ref{higher_enl} of the paper at hand we will prove the following 
more general statement by methods similar to those employed in Section \ref{essential_and_homol_inv}.

\begin{thm}\label{intro_p4_higher_enl_impl_ess}
Let $M$ be a closed oriented $n$-dimensional manifold. If $M$ is $k$-enlargeable and satisfies
\[ 
    \pi_i(M)=0 \;\;\;\mbox{for}\;\;\; 2\leq i\leq k-1, 
\]
then $M$ is  essential. In particular, area-enlargeable manifolds are  essential.
\end{thm}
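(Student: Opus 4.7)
The plan is to mimic the subspace construction of Theorem \ref{intro_p4_subsp_thm} in the setting of $k$-enlargeability, and to reduce essentialness of a $k$-enlargeable $M$ to the statement that $\Phi_{*}[M]$ lies outside this new subspace. The hypothesis $\pi_{i}(M)=0$ for $2\le i\le k-1$ will enter precisely where one transfers $\varepsilon$-area contraction on $k$-submanifolds through the classifying map $\Phi:M\to B\Gamma$.

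Following Definition \ref{enlhom}, I would first introduce a notion of $k$-enlargeability for rational homology classes of simplicial complexes with finitely generated fundamental group, by replacing ``$\varepsilon$-Lipschitz'' in the definition of enlargeability by ``$\varepsilon$-area-contracting on every $k$-dimensional submanifold''. In analogy with Proposition \ref{funktoreins}, the first task is to prove that this notion is well-defined on $\Q$-homology and functorial under continuous maps. The second, parallel to Theorem \ref{subsp_thm}, is to verify that the $k$-small classes (those not $k$-enlargeable) form a rational subspace $H_{n}^{{\rm sm}(k\text{-enl})}(X;\Q)\subset H_{n}(X;\Q)$ containing zero. Both should be direct adaptations of the arguments for ordinary enlargeability, since the $k$-area condition behaves well under disjoint union, pinching, and subdivision.

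The core new step is to prove that, under the connectivity hypothesis, $k$-enlargeability of $M$ implies $k$-enlargeability of $\Phi_{*}[M]\in H_{n}(B\Gamma;\Q)$ in the above homological sense; combined with the fact that $0$ is $k$-small, this gives $\Phi_{*}[M]\neq 0$. The assumption $\pi_{i}(M)=0$ for $2\le i\le k-1$ makes $\Phi$ a $k$-equivalence, so by obstruction theory the $k$-dimensional geometry of $M$ is reflected faithfully inside $B\Gamma$: a representative of $\Phi_{*}[M]$ can be chosen so that $k$-submanifolds in it pull back to $k$-submanifolds in $M$ without blow-up in $k$-volume. Covers of $M$ are in natural bijection with covers of $B\Gamma$ because $\Phi$ induces an isomorphism on $\pi_{1}$, so a cover $\overline M_{\varepsilon}\to M$ together with its contracting map to $S^{n}$ provides analogous data on the pullback cover of this representative, up to a bounded loss in the contraction constant absorbed by shrinking $\varepsilon$. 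For $k=2$ the connectivity hypothesis is vacuous, yielding the corollary on area-enlargeable manifolds.

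The main obstacle is making rigorous this transfer of the $k$-area contraction estimate from $M$ to a simplicial model of $\Phi_{*}[M]$ in $B\Gamma$. Without the vanishing of $\pi_{i}(M)$ up to $i=k-1$ the classifying map could fold nontrivial $k$-dimensional homotopy, so that $k$-submanifolds of the model would pull back to $k$-chains in $M$ of uncontrolled area, destroying the contraction estimate. Identifying this connectivity condition as the correct obstruction-theoretic input that rules out such folding is the conceptual heart of the argument; once it is in place, the remainder of the proof follows the formal template of Section \ref{essential_and_homol_inv}.
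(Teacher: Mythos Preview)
Your plan tries to carry the template of Section \ref{essential_and_homol_inv} over to $k$-enlargeability wholesale: define $k$-enlargeable homology classes, prove functoriality under $\pi_1$-isomorphisms, and conclude $\Phi_*[M]\neq 0$. The paper does \emph{not} do this, and in fact explicitly warns that for $k\ge 2$ homological invariance of $k$-enlargeability ``seems to behave less favourably''. The obstruction is exactly the step you flag as the main obstacle, and your proposed resolution does not work.

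The difficulty is one of direction. To show that $\Phi_*[M]$ is $k$-enlargeable in $B\Gamma$, you must, given a cover $\overline M_\varepsilon$ with an almost proper map $f_\varepsilon:\overline M_\varepsilon\to S^n$ of $k$-dilation $\le\varepsilon$, produce an analogous map on a cover of a finite subcomplex $S\supset\Phi(M)$ of $B\Gamma$. Since $\Phi$ is a $k$-equivalence, $S$ is built from $M$ by attaching cells of dimension $\ge k+1$, so the task is to \emph{extend} $f_\varepsilon$ over lifts of these cells while keeping the $k$-dilation small. Your phrase ``pull back $k$-submanifolds'' points the wrong way---there is no map $S\to M$ to pull back along---and in any case an extension of $f_\varepsilon$ over an $(l+1)$-cell with $l\ge k$ need not have small $k$-dilation: the attaching $l$-sphere can map to a subset of $S^n$ of small $l$-volume but large diameter, so any filling will be $k$-thick. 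The analogue of Lemma \ref{extend} simply fails for $k\ge 2$.

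The paper's proof accepts this loss and makes it the engine of the argument. The key lemma (Proposition \ref{extension}) shows that attaching $(l+1)$-cells to a complex carrying an $l$-enlargeable class yields an $(l+1)$-enlargeable class: Gromov's estimate (Lemma \ref{bound_on_Rad}) bounds the radius of the image of the attaching sphere by its $l$-volume, so one can cone it off by a map whose $(l+1)$-dilation---not $l$-dilation---is controlled. The connectivity hypothesis guarantees that the cells attached to $M$ to reach $B\pi_1(M)$ start only in dimension $k+1$; inducting from $l=k$ up to $l=n+1$, the image of $[M]$ becomes $(n+1)$-enlargeable in any finite $(n+1)$-dimensional subcomplex $X\subset B\pi_1(M)$ containing $M$. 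Were $M$ inessential, $[M]$ would vanish in such an $X$, and the zero class is never $(n+1)$-enlargeable. This direct contradiction replaces the entire subspace machinery; no functoriality or well-definedness of $k$-enlargeable homology is needed or claimed.
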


For $k\leq 2$ the condition on the homotopy groups is to be understood as empty. Note that for $k>2$ the condition 
is in fact necessary: 
Let $M$ be an enlargeable manifold. Then the product $M\times S^2$ is $3$-enlargeable, but the classifying map 
$M\times S^2 \to B\pi_1(M)$ 
sends the fundamental class to zero, i.\,e.\ $M\times S^2$ is not  essential. This is in accordance
with Theorem \ref{intro_p4_higher_enl_impl_ess} as $\pi_2(M \times S^2) \neq 0$. 

For $k\geq n+1$ the $k$-dimensional volume of any subset of $S^n$ is zero, of course. Therefore, in this case 
the assumption of $k$-enlargeability in Theorem \ref{intro_p4_higher_enl_impl_ess} can be dropped and 
the remaining nontrivial requirement is $\pi_i(M)=0$ for $2\leq i \leq k-1$. The inequality
$k \geq n+1$ and the Hurewicz theorem then imply
that all homotopy groups of the universal cover of $M$ vanish.  
In other words, Theorem \ref{intro_p4_higher_enl_impl_ess} includes the well known statement that aspherical 
manifolds are essential.

Hence the conditions in Theorem \ref{intro_p4_higher_enl_impl_ess} interpolate between two extreme cases: 
enlargeable and area-enlargeable manifolds on the one side and aspherical ones on the other side. 
It was shown in  \cite{GL(1983)} that area-enlargeable spin manifolds do not carry metrics of positive scalar curvature 
and it has been conjectured that the conclusion is valid for all 
aspherical manifolds. In this respect it seems reasonable to  conjecture that the conditions in Theorem
\ref{intro_p4_higher_enl_impl_ess} are also obstructions to the existence of 
positive scalar curvature metrics. In fact the strong Novikov conjecture implies 
that essential spin manifolds do not admit positive scalar curvature 
metrics, cf. \cite{Rosenberg(1983)}.

Concerning homological invariance, $k$-enlargeability for $k \geq 2$ seems 
to bahave less favourably than the other largeness properties considered in our work. However, 
we will not pursue this question further in this paper. Theorem \ref{intro_p4_higher_enl_impl_ess} is related in 
spirit to Theorem 2.5 in \cite{BrKo} which deals with functorial properties of hyperbolic cohomology classes. 

This article is based in part on a chapter of the first author's thesis \cite{Br2}. In particular, homological 
invariance of largeness properties (which is part of Theorem \ref{intro_p4_subsp_thm} of the present article) and 
 Theorem \ref{intro_p4_higher_enl_impl_ess} were first proved there. The most important novelties 
 of the present paper are a more systematic treatment of large homology classes  
 in Section \ref{essential_and_homol_inv} and - based on that - 
 the construction of  interesting examples of enlargable manifolds without hyperspherical 
 universal covers (Theorem \ref{example1}) and of essential manifolds that are not enlargeable (Theorem \ref{example2}). 

{\em Acknowledgements:} 
The first author would like to thank his thesis advisor D.\,Kotschick for continuous support and 
encouragement. Both authors gratefully acknowledge financial support from the 
\emph{Deutsche Forschungsgemeinschaft}, useful comments by D.\,Kotschick concerning 
a preliminary version of this paper and numerous helpful remarks by the referee.

\section{Large Riemannian manifolds} \label{largeriem} 

In this section we will recall classical notions of metric largeness for Riemannian manifolds, most 
of which were first formulated by Gromov, see for example \cite{Gromov(1986), Gromov(1996a), GL(1983)},  and also 
\cite{Cai(1994), Guth(2006p2)}. They include 
the properties of being
\begin{itemize}
   \item $k$-hypereuclidean and $k$-hyperspherical, see Definition \ref{erste}, 
   \item $k$-enlargeable, see Definition \ref{defnenl},
   \item having infinite filling radius, see Definition \ref{inffill} and 
   \item being macroscopically large, see Definition \ref{macrlarge}. 
\end{itemize}
In Proposition \ref{charballoon} we will characterise hypersphericy in terms of  the existence 
of a Lipschitz map of nonzero degree to the {\em balloon space}, which 
was introduced in \cite{HKRS(2007)}. Upon passing from Lipschitz to large scale Lipschitz maps this allows us to define 
coarsely hyperspherical manifolds, a notion similar to coarsely hypereuclidean manifolds, which by 
definition admit coarse maps to Euclidean space of nonzero degree. 
Furthermore we will discuss several implications between these largeness properties. 
In particular, in Proposition \ref{fill=macro} we will show that the classes of macroscopically large manifolds and 
of manifolds with infinite filling radii coincide. This is remarkable because it relates a coarse (i.\,e.\ quasi-isometric)  
property to a bi-Lipschitz one.

Let $f:(M,g_M)\to (N,g_N)$ be a smooth map between (not necessarily compact) Riemannian manifolds, 
and let $k$ be a positive integer.

\begin{defn}\label{dilation}
The \emph{$k$-dilation} of $f$ is defined as
\[ 
         \dil_k(f) := \sup_{p\in M} \| \Lambda^k Df_p \| \in \R \cup \{\infty\} 
 \]
the supremum of the norms of the $k$-fold exterior product of the differential $Df$. 
\end{defn}

Said differently, the $k$-dilation is the smallest number $\varepsilon$ such that for any $k$-dimensional 
submanifold $A \subset M$ the $k$-dimensional volume $\Vol_k(f(A))$ of the image $f(A)\subset N$ is bounded by 
$\varepsilon\cdot\Vol_k(A)$. The $1$-dilation is the smallest Lipschitz constant for $f$.

Let $p\in M$ be a point, and let $n$ be the dimension of $M$. Denote by $\lambda_1\geq\ldots\geq \lambda_n >  0$ the 
eigenvalues of the Gram matrix of the pullback $(Df_p)^*(g_N)_{f(p)}$ with respect to $(g_M)_p$. Then $\|\Lambda^k Df_p\| = 
\lambda_1\cdot\ldots\cdot\lambda_k$. Therefore, the inequality 
\[
  \dil_l(f)^{1/ l} \leq \dil_k(f)^{1/k} 
\]
holds for all $l\geq k$.

Let $(V,g)$ be a complete orientable Riemannian manifold of dimension $n$. A choice of orientation for 
$V$ defines a fundamental class $[V]\in H_n^{\rm lf}(V;\mathbb{Z})$ in locally finite homology. In  this context the 
 mapping degree
is well-defined for proper maps to oriented manifolds and for maps to closed oriented manifolds $Z$ that 
are {\em almost proper}, i.e. constant outside a compact set. This can be made rigorous by 
adding an infinite whisker to $Z$, extending the given almost proper map to a proper map with 
target $Z \cup {\rm whisker}$, and observing that $H^{\rm lf}_*(Z \cup {\rm whisker}) = \widetilde H_*(Z)$. 

In the following, we equip Euclidean spaces and unit spheres with their standard metrics.  

\begin{defn} \label{erste}
We call $(V,g)$ \emph{$k$-hypereuclidean} if there is a proper map
\[ 
   f:V \to\mathbb{R}^n
\]
of nonzero degree and of finite $k$-dilation. 
It is called \emph{$k$-hyperspherical} if for every $\varepsilon>0$ there is an almost proper map
\[ 
     f_\varepsilon : V \to S^n
\]
of nonzero degree such that $\dil_k(f_\varepsilon)\leq\varepsilon$. 
For $k=1$ we will omit the number, and for $k=2$ we will speak of \emph{area-hypereuclidean} and 
\emph{area-hyperspherical} manifolds.
\end{defn}

By the above inequality, every $k$-hypereuclidean or $k$-hyperspherical manifold is also 
$l$-hyper\-euclidean respectively $l$-hyperspherical for any $l \geq k$. Since $\mathbb{R}^n$ 
is obviously hyperspherical, any $k$-hypereuclidean manifold is also $k$-hyper\-spherical. Note also that
both notions depend only on the bi-Lipschitz type of the metric $g$.

Closely related is the notion of enlargeability. It was introduced by Gromov and 
Lawson in \cite{GL(1980)} and in the following more general form in \cite{GL(1983)}.

\begin{defn} \label{defnenl}
An orientable $n$-dimensional manifold $V$ is called \emph{$k$-enlargeable} if for every complete 
Riemannian metric $g$ on  $V$ and every $\varepsilon>0$ there is a Riemannian cover $\overline{V}_\varepsilon$ 
of $V$ and an almost proper map 
\[ 
   f_\varepsilon: \overline V_\varepsilon \to S^n
\]
of nonzero degree such that $\dil_k(f_\varepsilon)\leq\varepsilon$. 
As before, we will omit the number $k$ in the case $k=1$ and speak of \emph{area-enlargeable} manifolds in the case $k=2$.
\end{defn}

If $V$ is closed, then all Riemannian metrics on $V$ are bi-Lipschitz to each other and 
it is enough that $V$ satisfies the above 
condition with respect to one Riemannian metric. 

The significance of the notion of enlargeability is demonstrated by the following theorem, see 
\cite[Theorem 6.1]{GL(1983)}. 

\begin{thm}
If $V$ is area-enlargeable and the covers $\overline{V}_\varepsilon$ in Definition \ref{defnenl} 
may be chosen spin, then $V$ does not carry a complete Riemannian metric of uniformly positive scalar curvature.
\end{thm}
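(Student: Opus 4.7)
The plan is a contradiction argument via the twisted Dirac operator, following Gromov--Lawson \cite{GL(1983)}. Assume that $V$ carries a complete Riemannian metric $g$ with $\scal_g \geq \kappa_0 > 0$. For every $\varepsilon > 0$, area-enlargeability together with the spin hypothesis supplies a Riemannian spin cover $p_\varepsilon \colon \overline V_\varepsilon \to V$ (so that $p_\varepsilon^* g$ remains complete and of scalar curvature $\geq \kappa_0$) and an almost proper map $f_\varepsilon \colon \overline V_\varepsilon \to S^n$ of nonzero degree with $\dil_2(f_\varepsilon) \leq \varepsilon$. Fix once and for all a Hermitian bundle with unitary connection $(E,\nabla^E) \to S^n$ whose top Chern character $\ch_n(E) \in H^n(S^n;\Q)$ is nonzero; such an $E$ may be taken, after stabilisation, from a generator of $\widetilde K^0(S^n)$ for even $n$, the odd case being reduced to the even one by taking the product with $S^1$.

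Set $E_\varepsilon := f_\varepsilon^* E$ with the pull-back connection. Two facts will be essential. First, since the curvature $R^E$ is a $2$-form, the pointwise operator norm of $R^{E_\varepsilon}$ is controlled by $C \cdot \dil_2(f_\varepsilon) \leq C \varepsilon$, where $C$ depends only on $(E,\nabla^E)$. Second, because $f_\varepsilon$ is constant outside a compact set $K \subset \overline V_\varepsilon$, the bundle $E_\varepsilon$ is canonically trivialised with flat pull-back connection on $\overline V_\varepsilon \setminus K$, so that it can be compared with the trivial bundle of rank $\rk E$ at infinity.

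Let $D_\varepsilon$ (respectively $D_\varepsilon^{\mathrm{triv}}$) denote the spin Dirac operator on $\overline V_\varepsilon$ twisted by $E_\varepsilon$ (respectively by the trivial bundle of rank $\rk E$). The Lichnerowicz--Weitzenb\"ock identity gives
\[
  D_\varepsilon^2 \;=\; \nabla^*\nabla + \tfrac{1}{4}\scal_g + \mathcal R^{E_\varepsilon},
\]
with $\|\mathcal R^{E_\varepsilon}\|_{L^\infty} \leq C' \varepsilon$. Choosing $\varepsilon$ so small that $C'\varepsilon < \kappa_0/8$ yields $D_\varepsilon^2 \geq \kappa_0/8$ pointwise on $\overline V_\varepsilon$, which by completeness forces $\ker_{L^2} D_\varepsilon = 0$; the same applies verbatim to $D_\varepsilon^{\mathrm{triv}}$. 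On the other hand, since the two operators coincide outside $K$, the Gromov--Lawson relative index theorem \cite{GL(1983)} says that their relative index is well-defined and given by the topological formula
\[
   \ind(D_\varepsilon, D_\varepsilon^{\mathrm{triv}}) \;=\; \int_{\overline V_\varepsilon} \hat A(T\overline V_\varepsilon) \wedge \bigl( \ch(E_\varepsilon) - \rk E \bigr),
\]
which, by the nonvanishing of $\deg(f_\varepsilon)$ and of $\int_{S^n} \ch_n(E)$, is nonzero. This contradicts the vanishing of both $L^2$-kernels established above, completing the proof.

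The main obstacle is the noncompact index theory underlying the final step: on a complete open spin manifold, Dirac operators are generally not Fredholm, and one must instead work with relative indices for operators that agree at infinity, whose equality with the above topological formula requires a careful heat-kernel analysis at infinity as carried out in \cite{GL(1983)}. All remaining ingredients---the Weitzenb\"ock identity, the passage from uniform pointwise positivity to vanishing of $L^2$-kernels on complete manifolds, and the $\dil_2$-controlled curvature bound for $R^{E_\varepsilon}$---are by comparison essentially routine.
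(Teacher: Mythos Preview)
The paper does not actually prove this theorem; it is quoted verbatim as \cite[Theorem~6.1]{GL(1983)} and simply cited as a known result of Gromov and Lawson. Your proposal correctly reconstructs the standard Gromov--Lawson argument (twist the Dirac operator by a pull-back bundle with small curvature controlled by the $2$-dilation, use Lichnerowicz--Weitzenb\"ock to force vanishing $L^2$-kernel, and derive a contradiction from the relative index theorem), so it is in full agreement with the source the paper defers to.
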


Next, we will revisit the notion of filling radius. Recall that every Riemannian metric $g$ on $V$ induces a 
path metric $d_g$ on $V$. Denote by $L^\infty(V)$ the vector space of all functions $V \to \R$ with the uniform `norm' 
$\|-\|_\infty$. This is not a norm proper since it may take infinite values. Therefore the induced `metric' 
is not an actual metric. Nevertheless, the \emph{Kuratowski embedding}
\begin{align*} 
\iota_g :(V,d_g) &\hookrightarrow L^\infty(V), \\
v &\mapsto d_g(v,- )
\end{align*}
is an isometric embedding by the triangle inequality. 

One could replace $L^\infty(V)$ by its affine subspace $L^\infty(V)_b$ that is parallel to the Banach space of all
bounded functions on $V$ and contains the distance function $d_g(v,-)$ for some $v\in V$. Then the image of 
the Kuratowski embedding is contained in $L^\infty(V)_b$, and the `norm' $\|-\|_\infty$ induces an actual 
metric on $L^\infty(V)_b$. Since all points of $L^\infty(V)$ outside of this affine subspace are already 
infinitely far away from it, 
this would not change the following definition.

\begin{defn} \label{inffill}
The \emph{filling radius} of $(V,g)$ is defined as
\[ 
   \FillRad(V,g) := \inf\{ r>0 ~|~ \iota_{g*}[V] = 0 \in H_n^{\rm lf} (U_r(\iota_g V);\mathbb{Q})\} 
\]
where $U_r(\iota_g V)\subset L^\infty(V)$ denotes the open $r$-neighborhood of the image $\iota_g V \subset L^{\infty}(V)$. 
If the set on the right hand side is empty, we say that $(V,g)$ has {\em infinite filling radius}.  
\end{defn}

Note that for closed manifolds $L^\infty(V)_b$ is the vector space of all bounded functions on $V$ and 
the above definition of the filling radius coincides with the usual definition (see \cite{Gromov(1983)}, 
Section 1). For noncompact manifolds the filling radius need not be finite. For instance
the filling radius of the Euclidean space is infinite.

It follows from the definition that the property of having infinite filling radius depends only 
on the bi-Lipschitz type of the metric $g$.

We recall the following implication that 
was shown in \cite{Gromov(1986)} (see also \cite{Cai(1994)}).

\begin{prop} \label{hypspher-fill}
If $(V,g)$ is hyperspherical, then its filling radius is infinite.
\end{prop}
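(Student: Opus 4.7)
The plan is to argue by contradiction, following the strategy that goes back to Gromov. Suppose $\FillRad(V,g) = R < \infty$ and choose $r > R$ so that $\iota_{g*}[V] = 0$ in $H_n^{\rm lf}(U_r(\iota_g V); \mathbb{Q})$. The filling radius $c_n := \FillRad(S^n)$ of the round unit sphere is a positive constant (a classical computation). Pick $\varepsilon > 0$ with $\varepsilon r < c_n$, and use hypersphericity to obtain an almost proper map $f_\varepsilon : V \to S^n$ of nonzero degree with $\dil_1(f_\varepsilon) \leq \varepsilon$.

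The key construction is to extend $f_\varepsilon$ to a Lipschitz map on the neighborhood $U_r(\iota_g V)$ so that the bounding chain can be pushed into a neighborhood of $S^n$ that is too thin to fill. To bypass the obstruction that $S^n$ is not an injective metric space, compose with the Kuratowski embedding $\iota_{S^n} : S^n \hookrightarrow L^\infty(S^n)$ to obtain an $\varepsilon$-Lipschitz map $\iota_{S^n} \circ f_\varepsilon : \iota_g V \to L^\infty(S^n)$. Because $L^\infty$-spaces are injective for Lipschitz extensions (the Aronszajn--Panitchpakdi theorem, applied coordinate-wise via McShane--Whitney), this map admits an $\varepsilon$-Lipschitz extension $\tilde F : U_r(\iota_g V) \to L^\infty(S^n)$. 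Since $\tilde F$ carries $\iota_g V$ into $\iota_{S^n}(S^n)$ and is $\varepsilon$-Lipschitz, its image is automatically contained in $U_{\varepsilon r}(\iota_{S^n}(S^n))$.

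Functoriality of $H_n^{\rm lf}$ under $\tilde F$ then gives the contradiction. Concretely, $\iota_{g*}[V] = 0$ in $H_n^{\rm lf}(U_r(\iota_g V); \mathbb{Q})$ implies $\tilde F_* \iota_{g*}[V] = 0$ in $H_n^{\rm lf}(U_{\varepsilon r}(\iota_{S^n}(S^n)); \mathbb{Q})$. But this pushforward equals $\iota_{S^n *}(f_\varepsilon)_*[V] = \deg(f_\varepsilon) \cdot \iota_{S^n *}[S^n]$, which is nonzero by definition of the filling radius since $\varepsilon r < c_n = \FillRad(S^n)$.

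The main obstacle is the Lipschitz-extension step together with ensuring that the locally finite pushforward under $\tilde F$ is well defined. The first is handled by the injective-metric-space property of $L^\infty$. The second requires exploiting almost-properness of $f_\varepsilon$: since $f_\varepsilon$ is constant equal to some $p \in S^n$ outside a compact set $K \subset V$, one modifies $\tilde F$ to be the constant $\iota_{S^n}(p)$ outside a large bounded neighborhood of $\iota_g(K)$ and interpolates on a collar of fixed width; this costs at most a bounded factor in the Lipschitz constant, which may be absorbed by shrinking $\varepsilon$, and makes $\tilde F$ almost proper as required.
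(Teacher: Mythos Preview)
The paper does not supply its own proof of this proposition; it is simply quoted from Gromov \cite{Gromov(1986)} with a further pointer to Cai \cite{Cai(1994)}. Your argument is precisely the classical one found in those references: compose $f_\varepsilon$ with the Kuratowski embedding of $S^n$, use the injectivity of $L^\infty$ to extend $\varepsilon$-Lipschitz over $U_r(\iota_g V)$, and push a filling of $\iota_{g*}[V]$ into $U_{\varepsilon r}(\iota_{S^n}(S^n))$ with $\varepsilon r < \FillRad(S^n)$, contradicting positivity of the filling radius of the round sphere.

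Your handling of the locally finite pushforward is also the standard device, though it can be streamlined: no modification of $\tilde F$ is needed, since every point of $U_r(\iota_g V)\setminus U_r(\iota_g K)$ lies within $r$ of some $\iota_g(v)$ with $v\notin K$, so the unmodified $\varepsilon$-Lipschitz extension already sends this region into the contractible ball $B_{\varepsilon r}(\iota_{S^n}(p))$. Working at the chain level, the image of any locally finite filling is then, modulo simplices landing in that ball, a finite chain bounding $\deg(f_\varepsilon)\cdot \iota_{S^n*}[S^n]$, which gives the contradiction directly and avoids the interpolation step.
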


We have seen that hypereuclidean manifolds are hyperspherical and that
hyperspherical manifolds have infinite filling radius. It is not known whether these 
implications are equivalences or not.

In \cite{GY(2000)}  Gong and Yu used coarse algebraic topology to define another notion of largeness, 
which is thought to be closely related to Gromov's definitions. In fact we will show that it is equivalent 
to the property of having infinite filling radius.

First we will recall some basics on coarse geometry. For more details we refer to
 Roe's book \cite{Roe(2003)}, in  particular to Chapter 5 on coarse algebraic topology. 

Let $X$ be a metric space. 
A cover $\mathcal{U}$ of $X$ is called \emph{uniform} if the diameters 
of its members are uniformly bounded and if every bounded set in $X$ meets only finitely many members of 
$\mathcal{U}$. A familiy $(\mathcal{U})_{i\in I}$ of uniform covers is called \emph{anti-\v{C}ech system} 
if for every $r>0$ there exists a cover $\mathcal{U}_i$ with Lebesgue number at least $r$.

The \emph{nerve} of a cover $\mathcal{U}$ will be denoted by $|\mathcal{U}|$. It is the simplicial complex whose simplices 
are finite subsets of $\mathcal{U}$ with nonempty intersection in $X$. In particular the set of vertices is equal to 
$\mathcal{U}$.
The nerve of a uniform cover is locally finite. 

If $\mathcal{U}$ and $\mathcal{V}$ are two uniform covers such that the Lebesgue number of $\mathcal{V}$ is bigger 
than the uniform bound on the diameters of the sets of $\mathcal{U}$, we write $\mathcal{U} \leq \mathcal{V}$. In this 
way the set of uniform covers of $X$ becomes directed. If $\mathcal{U} \leq \mathcal{V}$, then there is a proper simplicial 
map $|\mathcal{U}|\to |\mathcal{V}|$ mapping each vertex $U\in\mathcal{U}$ to some vertex $V\in\mathcal{V}$ 
that contains $U$. The proper homotopy class of this map 
is uniquely determined.

If $X$ is proper (i.e. bounded subsets are precompact) then 
anti-\v{C}ech systems alway exist. Given an anti-\v{C}ech system 
one defines the \emph{coarse homology} of $X$ as
\[ 
H\!X_k (X;\mathbb{Q}) := \varinjlim H_k^{\rm lf}(|\mathcal{U}_i|;\mathbb{Q}). 
\]
This is independent of the choice of the anti-\v{C}ech system.

For proper $X$ and for any uniform cover $\mathcal{U}$ 
there is a proper map $X\to |\mathcal{U}|$ that sends each point $x\in X$ to a point in the simplex spanned 
by those $U\in\mathcal{U}$ that contain $x$. Moreover, the proper homotopy class of such a map is 
uniquely determined. Therefore, one gets an induced homomorphism
\[ 
    c: H_k^{\rm lf}(X;\mathbb{Q}) \to H\!X_k (X;\mathbb{Q}), 
\]
which will be called the \emph{character homomorphism} of $X$.

\begin{defn}[\cite{GY(2000)}] \label{macrlarge}
A complete oriented $n$-dimensional Riemannian manifold $V$ is called \emph{macroscopically large}, if
\[ 
  c[V] \neq 0 \in H\!X_n(V;\mathbb{Q}). 
\]
\end{defn}

Note that this property depends only on the quasi-isometry class of the metric. We will show that 
macroscopic largeness is equivalent to having infinite filling radius. This proves
that the property of having infinite filling radius depends only on the quasi-isometry class of the Riemannian metric. Note that the quasi-isometry class strictly includes the bi-Lipschitz class. It is not known whether hypereuclideaness or hypersphericity are 
also invariant under quasi-isometries.

\begin{prop}\label{fill=macro}
Let $V$ be a complete orientable Riemannian manifold. Then $V$ is macroscopically 
large if and only if its filling radius is infinite.
\end{prop}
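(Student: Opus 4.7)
The strategy is to compare the neighborhoods $U_r(\iota_g V) \subset L^\infty(V)$ of the Kuratowski embedding with the nerves of uniform covers of $V$, building explicit maps going in both directions so that the vanishing of $\iota_{g*}[V]$ in some $U_R(\iota_g V)$ becomes interchangeable with its vanishing in some $|\mathcal{U}_i|$. Fix an anti-\v{C}ech system $(\mathcal{U}_i)_{i \in I}$ for $V$, with $\mathcal{U}_i$ of Lebesgue number $r_i\to \infty$ and diameters bounded by $D_i$.

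The forward comparison is as follows. For each $i$, choose a representative $x_U \in U$ for every $U \in \mathcal{U}_i$, and define a proper simplicial map $\varphi_i \colon |\mathcal{U}_i| \to L^\infty(V)$ by $\varphi_i(U) = \iota_g(x_U)$, extended affinely over simplices. Since any simplex of $|\mathcal{U}_i|$ comes from sets $U_0, \ldots, U_k$ sharing a point $p \in V$, the vertices $\iota_g(x_{U_j})$ lie within $L^\infty$-distance $D_i$ of $\iota_g(p)$ and hence $\varphi_i(|\mathcal{U}_i|) \subset U_{D_i}(\iota_g V)$. With $\psi_i\colon V \to |\mathcal{U}_i|$ the canonical character map obtained from a partition of unity subordinate to $\mathcal{U}_i$, the straight-line homotopy from $\iota_g$ to $\varphi_i \circ \psi_i$ stays inside $U_{D_i}(\iota_g V)$ and is proper. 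This at once proves one direction of the equivalence: if $V$ is not macroscopically large, then $(\psi_i)_*[V] = 0$ in $H_n^{\rm lf}(|\mathcal{U}_i|;\mathbb{Q})$ for some $i$, so $\iota_{g*}[V] = (\varphi_i)_*(\psi_i)_*[V] = 0$ in $H_n^{\rm lf}(U_{D_i}(\iota_g V);\mathbb{Q})$ and $\FillRad(V) \leq D_i < \infty$.

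For the converse, suppose $\iota_{g*}[V] = 0$ in $H_n^{\rm lf}(U_R(\iota_g V);\mathbb{Q})$ and pick $\mathcal{U}_i$ with $r_i \gg R$. Consider the cover $\widetilde{\mathcal{U}}_i = \{U_R(\iota_g U) \cap U_R(\iota_g V)\}_{U \in \mathcal{U}_i}$ of $U_R(\iota_g V)$; its nerve $|\widetilde{\mathcal{U}}_i|$ contains $|\mathcal{U}_i|$ as the subcomplex of simplices arising from intersections already realised in $V$. The character map of $\widetilde{\mathcal{U}}_i$ supplies a proper map $\alpha_i\colon U_R(\iota_g V) \to |\widetilde{\mathcal{U}}_i|$ such that $\alpha_i\circ \iota_g$ is properly homotopic to the composition $V \xrightarrow{\psi_i} |\mathcal{U}_i|\hookrightarrow |\widetilde{\mathcal{U}}_i|$. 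Hence the hypothesis forces $(\psi_i)_*[V]=0$ in $H_n^{\rm lf}(|\widetilde{\mathcal{U}}_i|;\mathbb{Q})$.

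The main obstacle is to upgrade this to vanishing of $(\psi_i)_*[V]$ in $H_n^{\rm lf}(|\mathcal{U}_i|;\mathbb{Q})$ itself, since $|\widetilde{\mathcal{U}}_i|$ a priori carries extra simplices (whenever thickenings $U_R(\iota_g U_j)$ meet in $L^\infty$ without the $U_j$ themselves sharing a point in $V$). The Lebesgue number condition $r_i \gg R$ guarantees that such extra simplices correspond to tuples of pairwise $2R$-close points in $V$, each contained in a single member of $\mathcal{U}_i$. A nerve-theorem-type retraction $|\widetilde{\mathcal{U}}_i|\to |\mathcal{U}_i|$ (or an explicit comparison of anti-\v{C}ech systems for $V$ and for $U_R(\iota_g V)$ as $i$ varies) should then show that $|\mathcal{U}_i|\hookrightarrow |\widetilde{\mathcal{U}}_i|$ is injective on $n$-th locally finite rational homology, completing the argument.
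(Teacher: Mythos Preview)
Your argument for the direction ``not macroscopically large $\Rightarrow$ finite filling radius'' is essentially the paper's: map the nerve affinely into $L^\infty(V)$, check the image lands in a bounded neighbourhood of $\iota_g V$, and verify that the straight-line homotopy to $\iota_g$ is proper and stays in that neighbourhood. This part is fine.

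The converse direction, however, has two genuine gaps, and the paper proceeds quite differently here.

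First, the space $U_R(\iota_g V)$ is not proper (bounded sets are not precompact in $L^\infty(V)$), so your character map $\alpha_i\colon U_R(\iota_g V)\to |\widetilde{\mathcal{U}}_i|$ is not a proper map in the usual sense: preimages of compact subsets of the nerve are bounded but not compact. It is therefore unclear that $\alpha_i$ induces a homomorphism on locally finite homology at all, and you use exactly this to push $\iota_{g*}[V]=0$ forward. The paper flags this issue explicitly and avoids it: rather than working with the whole neighbourhood, it chooses a \emph{locally finite} subcomplex $X\subset U_r(\iota_g V)$ containing $V$ in which $[V]$ already bounds (such an $X$ exists once the filling radius is finite, since a bounding chain uses only countably many simplices). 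The inclusion $V\hookrightarrow X$ is then a coarse equivalence, with coarse inverse sending $x\in X$ to any $v\in V$ with $d(x,v)\le r$, and functoriality of the character homomorphism on proper metric spaces immediately gives $c[V]=0$ in $H\!X_n(V;\Q)$.

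Second, even granting a well-defined $(\alpha_i)_*$, your last paragraph is only a sketch: the assertion that $|\mathcal{U}_i|\hookrightarrow |\widetilde{\mathcal{U}}_i|$ is injective on $H_n^{\rm lf}$, or that a retraction exists, is precisely the content that needs proof, and ``should then show'' is not an argument. A workable repair along your lines would be to map $|\widetilde{\mathcal{U}}_i|$ not back to $|\mathcal{U}_i|$ but forward to $|\mathcal{U}_j|$ for some coarser member of the anti-\v{C}ech system with Lebesgue number exceeding $D_i+2R$; then the relevant composite is properly homotopic to the structure map $|\mathcal{U}_i|\to|\mathcal{U}_j|$, and vanishing in the direct limit follows. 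But the paper's coarse-equivalence argument bypasses all of this in two lines and is the cleaner route.
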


For the proof we will need the notion of a coarse map. The general definition is a bit involved. 
But recall from \cite[Section 1.3]{Roe(2003)} that a map $f:X\to Y$ from a path metric space to a 
metric space is \emph{coarse} if and only if it is large scale Lipschitz and (metrically)  proper.

\begin{proof}
Identify $V$ with its image under the Kuratowski embedding, and let $n$ be the dimension of $V$.

First assume that $\FillRad(V,g)<r$ for some finite $r$. Then there is a locally finite complex $X\subset U_r(V)$
containing $V$ such that $[V]=0\in H_n^{\rm lf}(X;\mathbb{Q})$. Moreover, the inclusion $V\hookrightarrow X$ is a 
coarse equivalence since the coarse map that assigns to a point $x\in X$ a point $v\in V$ with 
$d(x,v)\leq r$ is an inverse. The commutative diagram
\[
     \xymatrix{
H_n^{\rm lf}(V;\mathbb{Q}) \ar[r] \ar[d]_-c & H_n^{\rm lf}(X;\mathbb{Q}) \ar[d]^-c \\
H\!X_n(V;\mathbb{Q}) \ar[r]^-\cong & H\!X_n(X;\mathbb{Q})
}
\]
shows that $c[V]=0\in H\!X_n(V;\mathbb{Q})$, i.\,e.\ $(V,g)$ is not macroscopically large. 
(Note that $U_r(V)$ is also coarsely equivalent to $V$ but it is not proper. Therefore, it is not clear whether 
it admits a character homomorphism.)

To prove the converse implication, assume that $(V,g)$ is not macroscopically large. By the definition of 
the direct limit there is a uniform cover $\mathcal{U}$ of $V$ such that 
$\phi_*[V]=0\in H_n^{\rm lf}(|\mathcal{U}|;\mathbb{Q})$ where $\phi:V\to|\mathcal{U}|$ is a proper map that 
sends each point $v\in V$ to a point in the simplex spanned by those $U\in\mathcal{U}$ that contain $v$. 
Let $r>0$ be an upper bound on the diameters of the sets of $\mathcal{U}$. 

Define a map $\psi: |\mathcal{U}|\to L^\infty(V)$ by sending each vertex $U\in\mathcal{U}$ to 
some point $\psi(U)\in U\subset V$ and by extending this linearly over each simplex of the nerve. 

Let $p$ be a point in $|\mathcal{U}|$. It may be written as $p=\sum  \lambda_i U_i$ with $\sum \lambda_i = 1$, 
$\lambda_i>0$, and $U_i\in\mathcal{U}$ such that $\bigcap U_i\neq\emptyset$. Then $\psi(p)=\sum \lambda_i\psi(U_i)$ and
\begin{align*}
 d(\psi(p), \psi(U_1)) &= \left\|\sum \lambda_i \psi(U_i)-\psi(U_1) \right\|_\infty \\
&\leq \sum \lambda_i \|\psi(U_i)-\psi(U_1)\|_\infty \\
&\leq 2r
\end{align*}
since $U_i\cap U_1\neq\emptyset$ for all $i$. 
This shows that the image of $\psi$ lies in the $2r$-neighborhood of $V$ in $L^\infty(V)$. Hence
\[ (\psi\circ\phi)_*[V] = 0 \in H^{\rm lf}_n(U_{2r}(V);\mathbb{Q}). \]

Let $v\in V$ be a point. Say $v$ lies in the sets $U_1,\ldots,U_m\in\mathcal{U}$ and in no other set of $\mathcal{U}$.
 Then $\phi(v) = \sum_{i=1}^m \lambda_i U_i$ for some $\lambda_i\geq 0$ with $\sum\lambda_i=1$. Therefore
\begin{align*}
d(\psi(\phi(v)),v) &= \left\|\sum\lambda_i \psi(U_i) -v \right\|_\infty \\
&\leq \sum\lambda_i \|\psi(U_i)-v\|_\infty \\
&\leq r
\end{align*}
since $v\in U_i$ for all $i = 1, \ldots, m$. Thus the linear homotopy from the inclusion $V\hookrightarrow L^\infty(V)$ to $\psi\circ\phi$ 
is proper and lies entirely in $U_r(V)$. We conclude
\[ 
      [V] = (\psi\circ\phi)_*[V] \in H_n^{\rm lf} (U_{2r}(V);\mathbb{Q}), 
\]
and consequently $[V]=0\in H_n^{\rm lf}(U_{2r}(V);\mathbb{Q})$, hence $\FillRad(V,g)\leq 2r <\infty$.
\end{proof}

Propositions  \ref{hypspher-fill} and \ref{fill=macro} show that complete hyperspherical manifolds are macroscopically large. 
If the given hyperspherical manifold is the universal cover of a closed manifold,   this is proved directly in \cite[Proposition 3.1]{HKRS(2007)} using the \emph{balloon space} 
$B^n$. This path metric space is defined as a real half-line $[0,\infty)$ with an $n$-dimensional round sphere $S^n_i$ 
of radius $i$ attached at each positive integer $i\in[0,\infty)$.

\begin{prop}[\cite{HKRS(2007)}, Proposition 2.2]
The $n$-dimensional coarse homology of the balloon space is given by
\[
 H\!X_n(B^n;\mathbb{Q}) = \left(\prod_{i=1}^\infty \mathbb{Q}\right) / \left(\bigoplus_{i=1}^\infty \mathbb{Q}\right). 
\]
Moreover, for the  locally finite homology we have $H_n^{\rm lf}(B^n;\mathbb{Q}) = \prod_{i=1}^\infty \mathbb{Q}$, 
and the character homomorphism $c:H_n^{\rm lf}(B^n;\mathbb{Q})\to H\!X_n(B^n;\mathbb{Q})$ is the canonical projection.
\end{prop}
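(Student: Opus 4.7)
The plan is to compute everything from an explicit anti-\v{C}ech system. For each positive integer $j$, choose a uniform cover $\mathcal{U}_j$ of $B^n$ whose members have diameters bounded by $2^{j+1}$ and Lebesgue number at least $2^j$ (such covers can be obtained from $\tfrac{1}{2} 2^j$-nets, taking balls of radius $2^j$). The point is that the geometry of $B^n$ naturally splits into the ray and the bouquet of spheres: for any sphere $S^n_i$ with intrinsic diameter $\pi i < 2^j$, the entire sphere is contained in a single member of $\mathcal{U}_j$ and the corresponding portion of $|\mathcal{U}_j|$ is contractible; while for $i$ sufficiently large the cover $\mathcal{U}_j$ restricts to a fine uniform cover of $S^n_i$ with convex intersections, so that a nerve-theorem argument identifies the corresponding portion of $|\mathcal{U}_j|$, up to proper homotopy, with $S^n$. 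Let $N_j$ be the threshold above which the spheres contribute.

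Since the ray itself contributes nothing to $H_n^{\rm lf}$ and the spheres sit at well-separated locations, the nerve $|\mathcal{U}_j|$ has the proper homotopy type of the wedge of a ray with the spheres $S^n_i$ for $i \geq N_j$ attached at increasingly distant points. Because locally finite $n$-chains may have infinite support, this yields
\[ H_n^{\rm lf}(|\mathcal{U}_j|;\mathbb{Q}) \cong \prod_{i \geq N_j} \mathbb{Q}. \]
The connecting map $|\mathcal{U}_j| \to |\mathcal{U}_{j+1}|$ sends the bump over $S^n_i$ to a point whenever $N_j \leq i < N_{j+1}$ (since such a sphere has now been absorbed), and to the corresponding bump otherwise. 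On $H_n^{\rm lf}$ this is the projection $\prod_{i \geq N_j} \mathbb{Q} \twoheadrightarrow \prod_{i \geq N_{j+1}} \mathbb{Q}$. Taking the direct limit and using $N_j \to \infty$, every element of finite support dies eventually while every element can be represented at some finite stage, giving
\[ H\!X_n(B^n;\mathbb{Q}) \cong \Bigl(\prod_{i=1}^\infty \mathbb{Q}\Bigr) \Big/ \Bigl(\bigoplus_{i=1}^\infty \mathbb{Q}\Bigr). \]

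For the locally finite homology of $B^n$ itself, use the obvious CW structure with the ray as $1$-skeleton and one $n$-cell for the top dimension of each sphere. The resulting cellular $L^{\rm lf}$-chain complex has $H_n^{\rm lf}(B^n;\mathbb{Q}) = \prod_{i=1}^\infty H_n(S^n_i;\mathbb{Q}) = \prod_{i=1}^\infty \mathbb{Q}$, the product (not direct sum) appearing because the spheres are mutually disjoint and head off to infinity, so arbitrary formal sums are locally finite. For the character homomorphism, any proper map $\phi_j: B^n \to |\mathcal{U}_j|$ of the required type collapses each absorbed sphere to a point and maps each unabsorbed sphere by a degree-one map onto the bump; therefore $\phi_{j\,*}:\prod_{i} \mathbb{Q} \to \prod_{i \geq N_j} \mathbb{Q}$ is the projection onto the tail, and passing to the limit produces exactly the canonical quotient map.

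The main obstacle I expect is the nerve-theoretic step: checking that for $i$ large relative to the scale of $\mathcal{U}_j$, the nerve of the restricted cover is indeed proper-homotopy equivalent to $S^n_i$. This can be handled by choosing the covers $\mathcal{U}_j$ carefully (e.g.\ so that intersections of their members are convex in the path metric, putting us in the setting of the classical nerve theorem), which is why the explicit construction above is preferable to an abstract existence argument; the remaining pieces are routine identifications of cellular and locally finite homology.
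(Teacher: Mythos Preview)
The paper does not contain a proof of this proposition: it is quoted as Proposition~2.2 of \cite{HKRS(2007)} and used without argument. There is therefore nothing in the present paper to compare your attempt against.

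That said, your outline is the standard way this computation is carried out, and it is essentially what the cited reference does. One builds an anti-\v{C}ech system at scales tending to infinity; at scale $r$ the spheres $S^n_i$ with $\pi i \lesssim r$ are swallowed by a single cover element and contribute nothing to the nerve in degree $n$, while the remaining spheres, being much larger than the mesh, contribute a copy of $S^n$ each by the nerve lemma. The resulting direct system on $H_n^{\rm lf}$ is a sequence of tail projections $\prod_{i\geq N_j}\mathbb{Q}\twoheadrightarrow\prod_{i\geq N_{j+1}}\mathbb{Q}$, whose colimit is $\prod\mathbb{Q}/\bigoplus\mathbb{Q}$. The identification of $H_n^{\rm lf}(B^n;\mathbb{Q})$ and of the character map is routine, as you say. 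The only point that deserves more care than you give it is making the covers explicit enough that the nerve theorem applies cleanly on each large sphere and near the attaching points; you flag this yourself, and it is handled in \cite{HKRS(2007)} by choosing covers adapted to the decomposition of $B^n$ into ray and spheres.
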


Using this computation we obtain the following characterization of hyperspherical manifolds.

\begin{prop} \label{charballoon}
A complete oriented Riemannian manifold $V$ of dimension $n$ is hyperspherical, if and only if 
there exists a proper Lip\-schitz map $f:V\to B^n$ such that $f_*[V]\neq 0 \in H\!X_n(B^n;\mathbb{Q})$.
\end{prop}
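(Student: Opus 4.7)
The plan is to prove the two implications separately; the ``if'' direction reduces to a straightforward composition, while the ``only if'' direction requires a careful gluing construction.

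For the ``if'' direction, suppose $f : V \to B^n$ is proper Lipschitz with $f_*[V] \neq 0 \in H\!X_n(B^n;\Q)$. The preceding proposition identifies $H_n^{\rm lf}(B^n;\Q)$ with $\prod_{i \geq 1} \Q$ via the sphere-degrees $d_i := \deg(f : V \to S^n_i)$, and identifies the character homomorphism with projection onto the quotient by $\bigoplus_{i} \Q$. Hence $d_i \neq 0$ for infinitely many $i$. For each such $i$, postcompose $f$ with the natural $1$-Lipschitz retraction $r_i : B^n \to S^n_i$ that collapses everything outside $S^n_i$ to the attachment point $i$, and then with the $\tfrac{1}{i}$-Lipschitz rescaling $\sigma_i : S^n_i \to S^n$. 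The composition $\sigma_i \circ r_i \circ f : V \to S^n$ is almost proper (since $f^{-1}(S^n_i)$ is compact by properness of $f$, and the composition is constant outside this compact set), has degree $d_i \neq 0$, and has $\dil \leq \dil(f)/i$. Sending $i$ to infinity through indices with $d_i \neq 0$ yields the almost proper maps of arbitrarily small dilation required for hypersphericity.

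For the ``only if'' direction, assume $V$ is hyperspherical. For each $k \geq 1$, invoke hypersphericity to obtain an almost proper map $g_k : V \to S^n$ of nonzero degree with $\dil(g_k) \leq 1/k^2$, constant equal to a fixed north pole $p \in S^n$ (after applying a rotation of $S^n$) outside a compact set $K_k \subset V$; inductively enlarge the $K_k$ so that $K_1 \subset K_2 \subset \cdots$ forms an exhaustion of $V$. The rescaling $s_k : S^n \to S^n_k$ produces $\bar g_k := s_k \circ g_k : V \to S^n_k \subset B^n$, which is $\tfrac{1}{k}$-Lipschitz and constant equal to the attachment point $k \in B^n$ outside $K_k$. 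The goal is to glue these into a single proper Lipschitz $f : V \to B^n$ having nonzero degree on every sphere $S^n_k$. On the annular region $A_k := K_k \setminus \mathrm{int}(K_{k-1})$, the map $f$ will be a modified $\bar g_k$ (still into $S^n_k$) that equals the attachment point on both components of $\partial A_k$ and retains a nonzero degree; on a small collar of $\partial K_k$ immediately beyond, $f$ will traverse the half-line segment $[k,k+1]$, thereby connecting $S^n_k$ and $S^n_{k+1}$ of $B^n$ continuously. On $V \setminus \bigcup_k K_k$ (empty by exhaustion, but important for normalising the properness) the rule $f(v) = k + d(v,K_k)$ would guarantee escape to infinity along the half-line.

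The main obstacle is the homotopy alluded to above: while $\bar g_k$ is constant outside $K_k$, it need not be constant on $K_{k-1}$, so a naive gluing is discontinuous. To overcome this, I would exploit the freedom to enlarge $K_k$ together with a standard local-degree argument, concentrating the nonzero degree of $\bar g_k$ near a regular preimage lying in $A_k$ and homotoping the map to be constant equal to the attachment point outside a small neighbourhood of that preimage; this produces a map homotopic to $\bar g_k$, supported on $A_k$, of the same nonzero degree, and with Lipschitz constant uniformly bounded. Once this is in place, $f$ is $O(1)$-Lipschitz and proper (the latter because $f$ escapes to infinity along the half-line as $v \to \infty$ in $V$), and $f_*[V]$ has every coordinate nonzero in $H^{\rm lf}_n(B^n;\Q) = \prod_k \Q$; hence its image in $H\!X_n(B^n;\Q) = \prod_k \Q / \bigoplus_k \Q$ is nonzero, as required.
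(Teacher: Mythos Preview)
Your ``if'' direction is correct and essentially identical to the paper's argument.

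Your ``only if'' direction has the right overall architecture---build an exhaustion and glue annular pieces mapping to successive spheres $S^n_k$, connected by the half-line---and you correctly identify the obstacle: $\bar g_k$ need not be constant on $K_{k-1}$. But your proposed fix does not work as stated. There are two problems. First, there is no reason a regular preimage of $\bar g_k$ should lie in $A_k$: the entire support of $g_k$ might sit inside $K_{k-1}$, and enlarging $K_k$ does nothing to move those preimages. Second, and more seriously, ``concentrating the degree near a regular preimage'' destroys the Lipschitz bound. If you homotope $\bar g_k$ to a map supported on a neighbourhood $U$ of a single preimage and covering all of $S^n_k$ (which has diameter $\pi k$), the Lipschitz constant of the new map is at least $\pi k / \diam(U)$; for this to stay bounded you would need $\diam(U) \gtrsim k$, which is neither ``small'' nor guaranteed to fit inside $A_k$, and certainly not delivered by a local-degree argument.

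The paper's fix is different and worth internalising. Rather than modifying the map on the domain side, it works on the target side: at stage $i$ it first maps $V$ to a very large sphere $S^n_R$ (with $R$ chosen large relative to both $i$ and the radius of the previously constructed ball $B_{i-1}$) via a $1$-Lipschitz map of nonzero degree. Because the map is $1$-Lipschitz and $B_{i-1}$ has bounded diameter, the image $f'_i(B_{i-1})$ occupies only a small portion of $S^n_R$; choosing $R$ large enough leaves room for a ball of radius $\pi i$ disjoint from both $f'_i(B_{i-1})$ and the constant value at infinity. One then postcomposes with a $1$-Lipschitz collapse $S^n_R \to S^n_i$ that crushes the complement of this ball to the basepoint. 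The result is automatically constant on $B_{i-1}$ and on $V \setminus K_i$, has nonzero degree, and is $1$-Lipschitz---no homotopy or local-degree manipulation is needed.
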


\begin{proof}
First, assume that $V$ is hyperspherical. We will construct a sequence of closed balls 
\[ 
      \emptyset=B_0\subset B_1\subset B_2\subset\ldots\subset V 
\]
that exhausts $V$ and a sequence of $1$-Lipschitz maps $f_i:B_i\setminus \mathring{B}_{i-1}\to S^n_i \vee[i,i+1]\subset 
B^n$ such that $f_i(\partial B_{i-1})=i$, $f_i(\partial B_i)=i+1$, and such that $f_i$ is of nonzero degree 
as a map to $S^n_i$. 

Assume that the balls and maps have been constructed up to index $i-1$. Let $S^n_R$ be the 
round sphere of radius $R$ with a large $R$ which will be specified later. Choose a $1$-Lipschitz map $f'_i:V\to S^n_R$ 
that is constant outside a compact set $K_i$ and that is of nonzero degree. 
Without loss of generality, we may assume that $B_{i-1}\subset K_i$ and that $f'_i(B_{i-1})$ 
and the point $f'_i(V\setminus K_i)$ avoid a ball of radius $\pi i$ in $S^n_R$ 
(choose for instance $R\geq 2 i+r/\pi$ where $r$ is the radius of $B_{i-1}$). 
Let $g_i:S^n_R\to S^n_i$ be a nonexpanding map that contracts everything outside this ball 
of radius $\pi i$ to the base point of $S^n_i$.

Choose a ball $B_i\subset V$ such that $K_i\subset B_i$ and such that $d(\partial B_i, K_i)\geq 1$. 
Define $f_i$ as follows:
\[ 
f_i(v) := \begin{cases}
g_i\circ f'_i(v) & \mbox{for } v\in B_i\setminus \mathring{B}_{i-1} \mbox{ and }  d(v,\partial B_i)\geq 1 \\
i+1-d(v,\partial B_i) & \mbox{for } v\in B_i, d(v,\partial B_i)\leq 1
\end{cases}
\]
Then $f_i$ has the asserted properties.

The collection of the maps $f_i$ defines a proper $1$-Lipschitz map $f:V\to B^n$ such that every entry 
of $f_*[V]\in H_n^{l\!f}(B^n;\mathbb{Q})\cong\prod_{i=1}^\infty \mathbb{Q}$ is nonzero.  
In particular $f_*[V]\neq 0 \in H\!X_n(B^n;\mathbb{Q})$ as required.

For the converse, let $f:V\to B^n$ be a proper Lipschitz map such that $f_*[V]\neq 0 \in H\!X_n(B^n;\mathbb{Q})$. 
Let $\varepsilon>0$, and choose an integer $i\geq \dil_1(f)/\varepsilon$ such that the $i$-th entry of 
$f_*[V]\in H_n^{\rm lf}(B^n;\mathbb{Q})\cong\prod_{i=1}^\infty 
\mathbb{Q}$ is not zero. This is possible since by assumption there are infinitely many nonvanishing entries.

Let $f_\varepsilon$ be the composition of $f$ with the canonical quotient map from $B^n$ to the $i$-th sphere $S^n_i$ 
and the dilation from this sphere of radius $i$ to the unit sphere. Then $f_\varepsilon$ is constant outside a compact set, 
has nonzero degree, and its dilation is given by $\dil_1(f)/i\leq\varepsilon$. This proves that $V$ is indeed hyperspherical.
\end{proof}

In \cite[Proposition 3.1]{HKRS(2007)} it is shown that hyperspherical universal covers of closed Riemannian 
manifolds admit proper Lipschitz maps to the balloon space sending the locally finite fundamental class
of the universal covers to nonzero classes. The corresponding implication in our
Proposition \ref{charballoon} is slightly more general in that it is not assumed that the metric on $V$ is invariant under a 
cocompact group action.

\begin{defn} \label{coarsestuff}
A complete oriented Riemannian manifold $V$ of dimension $n$ is called \emph{coarsely hypereuclidean}, if there is a coarse map
\[ 
f:V  \to \mathbb{R}^n
\]
such that $f_*[V]\neq 0 \in H\!X_n(\mathbb{R}^n;\mathbb{Q})\cong \mathbb{Q}$. 
It is called \emph{coarsely hyperspherical} if there is a coarse map
\[
   f: V  \to B^n 
\]
to the balloon space such that $f_*[V]\neq 0 \in H\!X_n(B^n;\mathbb{Q})$.
\end{defn}

These two notions depend only on the quasi-isometry class of the metric on $V$. 

The following diagram summarizes the known implications between some  of the largeness properties 
on a complete Riemannian manifold discussed in this  section. 
\[\xymatrix{
*\txt{hypereuclidean} \ar@{}[r]|*{\Rightarrow} \ar@{}[d]|*{\Downarrow} & *\txt{coarsely\\hypereuclidean} \ar@{}[d]|*{\Downarrow} \\
*\txt{hyperspherical} \ar@{}[r]|*{\Rightarrow} \ar@{}[d]|*{\Downarrow}_{\rm Prop. 
\ref{hypspher-fill}\;} & *\txt{coarsely\\hyperspherical} \ar@{}[d]|*{\Downarrow} \\
*\txt{infinite\\filling radius} \ar@{}[r]|*{\Leftrightarrow}^{\rm Prop. \ref{fill=macro}} & *\txt{macro-\\scopically large}
}\]

As $H_n^{\rm lf}(\R^n;\Q) \cong H\!X_n(\R^n;\Q) \cong \Q$,   hypereuclidean manifolds are coarsely 
hypereuclidean, and Proposition \ref{charballoon} implies that hyperspherical manifolds are coarsely hyperspherical. 
This explains the two upper horizontal arrows. 
Moreover, the proof of Proposition \ref{charballoon} shows the existence of a coarse map $\R^n \to B^n$ sending the coarse 
fundamental class of $\R^n$ to a non-zero class in $H\!X_n(B^n;\Q)$. This implies the upper vertical arrow on the right.
The lower right vertical implication follows by the very definition of macroscopic largeness. 
Apart from the lower horizontal arrow it is not known if any of the implications is an equivalence. 

We also remark that the properties on the left-hand side are invariants of the bi-Lipschitz class of the given metric, the ones 
on the right-hand side of its quasi-isometry class.

\section{Largeness in homology}\label{essential_and_homol_inv}

In this section we shall formulate the concept of  largeness for 
rational homology classes in simplicial complexes with finitely generated
fundamental groups. We will then prove Theorem \ref{intro_p4_subsp_thm}.
In this section, the term {\em large} is a placeholder for one of the properties of being enlargeable or having a 
universal cover which is (coarsely) hyperspherical, (coarsely) hypereuclidean or macroscopically 
large. 

The method of extending differ\-ential-geometric concepts from smooth manifolds 
to more general spaces like simplicial complexes has occured at other places in  the literature in similar contexts, 
see e.\,g.\ \cite{Br}. 

We equip the $n$-dimensional simplex $\Delta^n$ with the metric induced by the 
standard embedding into $\R^{n+1}$. Recall from \cite[Chapter 1]{Gromov(2001)} that each connected simplicial complex 
comes with a canonical path metric restricting to this standard metric 
on each simplex. Furthermore, if $p : X \to Y$ is a covering map of path connected 
topological spaces and $Y$ is equipped with a path metric, then 
there is a unique path metric on $X$ so that $p$ is a local isometry. If 
$Y$ is a simplicial complex with the canonical path metric and 
$X$ carries the induced simplicial structure, then this is the canonical path metric on $X$.

A connected subcomplex $S$ of a connected simplicial complex $X$ is called {\em $\pi_1$-surjective}, if 
the inclusion induces a surjection of fundamental groups and we say that $S$ {\em carries} a 
homology class $c \in H_*(X;\Q)$, if $c$ is in the image of the map in homology induced 
by the inclusion.  

If $p : \overline{X} \to X$ is a (not necessarily connected) cover of a simplicial complex $X$ and $c \in H_{n}(X;\Q)$ 
is a simplicial homology class, the {\em transfer} $p^!(c) \in H_n^{\rm lf}(\overline X;\Q)$ is 
represented by the formal sum of all preimages of simplices in a chain representative of $c$, with appropriate coefficients.

\begin{defn} \label{enlhom} Let $X$ be a connected simplicial complex with finitely generated fundamental group and 
let $c \in H_n(X;\Q)$ be a (simplicial) homology class. Choose a finite connected $\pi_1$-surjective subcomplex 
$S \subset X$ carrying $c$.  (This subcomplex exists, because $\pi_1(X)$ is finitely generated.) 

The class $c \in H_n(X;\Q)$ is called {\em enlargeable}, if the following holds: 
Let $\varepsilon>0$. Then there is a connected cover $p : \overline X \to X$ 
and an almost proper $\varepsilon$-contracting map 
$f_\varepsilon: \overline S  \to S^n$ which sends the class $p^!(c)$ to a nonzero class in $\widetilde H_n(S^n;\Q)$.
Here $\overline S := p^{-1}(S)$ (which is connected as $S$ is $\pi_1$-surjective) 
is equipped with the canonical path metric.    

The class $c$ is called \emph{(coarsely) hypereuclidean}, \emph{(coarsely) hyperspherical}, 
respectively \emph{macroscopically large} if the complex $\widetilde S = p^{-1}(S)$ 
associated to the universal cover $p : \widetilde X \to X$ together with the transfer class $p^!(c)$ 
enjoys the according property. 
\end{defn} 

It is important to work with $\pi_1$-surjective subcomplexes $S$, because then the covers $\overline S$ 
are connected and hence equipped with canonical path metrics. We could have
replaced the conditions in Definition \ref{enlhom} by first representing the homology class in question as the image of the fundamental 
class under a map $\phi : M \to X$ from a closed oriented $n$-dimensional manifold $M$ to $X$ and 
requiring that $M$ or appropriate covers thereof share the corresponding largeness property.  This indeed works if 
$\phi$ induces a surjection in $\pi_1$. We preferred the above definition because it 
applies as well to homology classes which are not representable by closed manifolds (which  
is relevant in low dimensions), works for fundamental groups which are not 
necessarily finitely presented and last but not least shows that metric largeness properties are 
not linked to differential-topological properties of manifolds, but only to metric properties of simplicial complexes. Of course this will not prevent 
us from applying the results of the present section mainly to the classical case of closed manifolds later on.

We remark that a map defined on a connected simplicial complex $X$ (with the 
path metric) to a metric space is $\varepsilon$-contracting if and only if this 
holds for the restriction of the map to each simplex in $X$. 

We need to show that Definition \ref{enlhom} does not depend on the choice of $S$. This is in fact 
the main technical argument in this section and we will explain it first in detail 
for enlargeable classes. We start with the following basic extension lemma. 

\begin{lem} \label{extend} Let $k, n \geq 1$ be natural numbers and equip the disk $D^k$ and its boundary $\partial D^k$ with 
fixed, but arbitrary piecewise smooth Riemannian metrics (which need not be related).   
Then there are positive constants $\delta$ and $C$ which depend only on the chosen metrics 
on $D^k$ and $\partial D^k$ an on $n$, so that the following holds: Let $0 < \epsilon < \delta$ and let $f : \partial D^k \to S^n$ be 
a piecewise smooth $\varepsilon$-contracting map. Then $f$ can be extended to a piecewise smooth 
$(C \cdot \varepsilon)$-contracting map $D^k \to S^n$.
\end{lem}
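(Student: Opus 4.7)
The plan is to exploit the fact that an $\varepsilon$-contracting map from $\partial D^k$ with small $\varepsilon$ has image of small diameter in $S^n$, so for $\varepsilon$ below a suitable threshold the image fits inside a geodesically convex ball of $S^n$, on which $S^n$ is quantitatively Euclidean. The extension is then obtained by linearly damping the map to a constant across a fixed collar of $\partial D^k$.

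First I would fix, purely in terms of the given Riemannian data and independently of $\varepsilon$ or $f$, the following ingredients: a bi-Lipschitz constant $K_1 \geq 1$ comparing the given metric on $\partial D^k$ with the metric induced from $D^k$ (both are piecewise smooth Riemannian metrics on the same compact manifold, hence bi-Lipschitz equivalent); a smooth collar embedding $\iota : \partial D^k \times [0,a] \hookrightarrow D^k$ together with a bi-Lipschitz constant $K_2 \geq 1$ comparing the product metric (with the given $\partial D^k$-metric as its spatial factor) with the restriction of the given metric on $D^k$; and a smooth cutoff $\phi : [0,a] \to [0,1]$ with $\phi(0) = 1$ that vanishes identically near $a$. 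Set $\delta := \pi / \bigl(4\diam(\partial D^k)\bigr)$, the diameter being taken in the given metric. For $0 < \varepsilon < \delta$ the set $f(\partial D^k)$ has diameter less than $\pi/4$ in $S^n$, so fixing any $p_0 \in f(\partial D^k)$ one has $f(\partial D^k) \subset B_{\pi/4}(p_0)$, a geodesically convex ball on which $\exp_{p_0}^{-1}$ is bi-Lipschitz with a constant $L_n$ depending only on $n$.

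Now let $g := \exp_{p_0}^{-1} \circ f : \partial D^k \to T_{p_0}S^n$. This composite is piecewise smooth, $L_n \varepsilon$-contracting in the given metric on $\partial D^k$, and takes values in the Euclidean ball of radius at most $L_n \varepsilon \diam(\partial D^k) < \pi/4$ around the origin. Define the extension $\tilde g : D^k \to T_{p_0}S^n$ by $\tilde g(\iota(x,t)) := \phi(t)\, g(x)$ on the collar and $\tilde g \equiv 0$ elsewhere; the two definitions match in a neighborhood of the inner boundary of the collar because $\phi$ vanishes there. Finally set $\tilde f := \exp_{p_0} \circ \tilde g$; this is piecewise smooth, agrees with $f$ on $\partial D^k$, and takes values in $B_{\pi/4}(p_0)$. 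On the collar in the product metric the pointwise bounds $\|\partial_x \tilde g\| \leq L_n \varepsilon$ and $\|\partial_t \tilde g\| \leq \|\phi'\|_\infty L_n \varepsilon \diam(\partial D^k)$ combine to give Lipschitz constant of order $\varepsilon$; passing to the given metric on $D^k$ via $K_1 K_2$ and composing with $\exp_{p_0}$, which is $L_n$-Lipschitz on the ball in question, yields $\dil_1(\tilde f) \leq C\varepsilon$ for an explicit constant $C$ built from $L_n$, $K_1$, $K_2$, $\|\phi'\|_\infty$ and $\diam(\partial D^k)$.

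The only real subtlety is to verify that each constant collected along the way depends solely on $n$ and on the two fixed Riemannian metrics, not on $\varepsilon$ or $f$. This is automatic because the collar, the bi-Lipschitz comparisons on $\partial D^k$ and on the collar, the cutoff $\phi$, and the uniform behavior of $\exp_{p_0}$ on $B_{\pi/4}(p_0)$ are all chosen before $\varepsilon$ enters the picture; the threshold $\delta$ exists precisely to guarantee that the entire construction stays inside the Euclidean chart on which these comparisons are uniform.
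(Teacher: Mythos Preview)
Your proposal is correct and follows essentially the same approach as the paper: both arguments observe that for small $\varepsilon$ the image $f(\partial D^k)$ lies in a small geodesic ball of $S^n$, transfer to a Euclidean chart (you use $\exp_{p_0}^{-1}$, the paper uses polar coordinates on a hemisphere), and then extend by interpolating linearly to a constant. The only cosmetic difference is that the paper first normalizes the metric on $D^k$ to a radial form and cones from the center, whereas you keep the given metric and damp along a fixed collar with a cutoff; either device produces the required $C\varepsilon$ bound with constants independent of $\varepsilon$ and $f$.
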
 

\begin{proof} Because any two piecewise smooth metrics on $D^k$ are in bi-Lipschitz correspondence, it is 
enough to treat the case when the metric on $D^k$ is given in polar coordinates by $dr^2 + r^{2(k-1)} \cdot g$ 
where $g$ is the given piecewise smooth Riemannian metric on $\partial D^k$. 

We can choose a $\delta > 0$ so that for any $0< \varepsilon< \delta$ and any $\varepsilon$-contracting 
piecewise smooth map 
$f : \partial D^k \to S^n$,
 the image of $f$ is contained in a closed hemisphere $D \subset S^n$. 
Using polar coordinates, we identify the hemisphere with the unit ball $D^n \subset \R^n$ 
by a diffeomorphism $\omega: D^n \to D$. This diffeomorphism is bi-Lipschitz with Lipschitz constants 
independent of $f$. In particular there is a constant $C$, independent 
of $\epsilon$ an $f$, so that $\omega^{-1} \circ f : \partial D^k \to D^n$ is $(C \cdot \epsilon)$-contracting. This implies that 
the diameter of the image of $\omega^{-1} \circ f$ is bounded above by $\diam(\partial D^k) \cdot C \cdot \varepsilon$. 

Now map the midpoint $P$ of $D^k$ to some point contained in the image of $ \omega^{-1} \circ f$ 
and extend $\omega^{-1} \circ f$ to $D^k$ linearly along the radial lines joining $P$. This extended map 
is piecewise smooth and using polar coordinates on 
$D^k$ for computing the lengths of piecewise smooth curves in $D^k$  (and the fact that the metric 
on $D^k$ has the special form described above), 
this extended map is $C \cdot  \varepsilon$-contracting 
with a constant $C$ which is independent of $\varepsilon$ and $f$.

Upon composing this map with the Lipschitz map $\omega$, the claim of the lemma follows.
\end{proof}

Returning to Definition \ref{enlhom} let   
$S' \subset S$ be a smaller finite connected $\pi_1$-surjective subcomplex which carries $c$.  
If one of the properties described in Definition \ref{enlhom} holds 
for $S$, then it holds as well for $S'$ by the naturality of $p^!$ and 
because the lifted inclusion $\overline{S'} \hookrightarrow \overline{S}$ 
is $1$-Lipschitz for any connected cover $\overline{X} \to X$.  
Now let $S, S' \subset X$ be two  finite connected $\pi_1$-surjective subcomplexes carrying $c$. Then 
there is a third finite connected $\pi_1$-surjective subcomplex $T \subset X$ carrying $c$ 
and containing $S$ and $S'$. Hence it remains to show that in Definition \ref{enlhom} we may 
pass from $S$ to a  larger finite connected $\pi_1$-surjective subcomplex $T$ of $X$.

Let $\varepsilon > 0$. By assumption there is a connected cover $p : \overline X \to X$ and an almost proper 
$\varepsilon$-contracting map $f_{\varepsilon} : \overline S \to S^n$ 
satisfying $(f_{\varepsilon})_{*} (p^!(c)) \neq 0$. We will show that if $\varepsilon$ is small 
enough, then $f_{\varepsilon}$ can be extended to a $(C\cdot \varepsilon)$-contracting 
almost proper map $\overline{T} \to S^n$ where $C > 0$ is a constant which depends 
only on $S$ and $T$, but not on $\varepsilon$. 

The proof is by induction on the $k$-skeleta $T^{(k)}$ of $T$, where $0 \leq k \leq \dim T$. However, the
start of the induction is a bit involved, because we need to treat the cases $k = 0, 1$ 
simultanously.  

Let us first assume that $T \setminus S$ contains just one vertex $v$. 
Let $\overline V \subset \overline T$ 
be the set of lifts of $v$. For each $\overline v \in \overline V$, let $F(\overline v) \subset 
\overline S$ be the set of all vertices having a common edge with $\overline v$. Note that because 
$\overline T$ is connected and locally finite, the set $F(\overline v)$ is nonempty and finite. 
Furthermore, $\diam F(\overline v)$  (measured with respect to the path metric on $\overline S$) 
is  independent of $\overline v \in \overline V$. Let $F(\widetilde v) \subset \widetilde S$ be 
the subset defined in an analogous fashion as $F(\overline v)$ but with $\overline S$ replaced
by the universal cover $\widetilde S \to S$ (and $\overline v$ by a point $\widetilde v \in \widetilde S$ over $v$) and 
set
\[
   d := \diam F(\widetilde v)
\]
measured with respect to the path metric on $\widetilde S$. Then $d$ is independent of the choice of 
$\widetilde v$ and $\varepsilon$ and furthermore
\[
    \diam F(\overline v) \leq d \, .
\]
Let $e \subset T$ be a fixed 
edge connecting $v$ with a vertex in $S$. For $\overline v \in \overline V$, we  
set $f_{\varepsilon}(\overline v) := f_{\varepsilon}(v_1(\overline e (\overline v)))$ where 
$\overline e (\overline v)$ is the uniqe lift of $e$ containing $\overline v$ and $v_1(\overline e(\overline v))$ 
is the vertex of this lift different from $\overline v$. The extension 
$f_{\varepsilon} : \overline{S \cup \{v\}} \to S^n$ defined in this 
way satisfies $d(f_{\varepsilon} (v_0) , f_{\varepsilon}(v_1)) \leq \max\{d,1\} \cdot \varepsilon$, 
if $v_0$ and $v_1$ are the vertices in some $1$-simplex of $\overline{S \cup T^{(1)}} = p^{-1}(S \cup T^{(1)})$. 
Next, assuming  
that $\max\{d,1\} \cdot \varepsilon < \delta_1$, we extend $f_{\varepsilon}$ to a 
$(\max\{d,1\} \cdot C_1 \cdot \varepsilon)$-contracting map 
$\overline{S \cup T^{(1)}} \to S^n$ using Lemma \ref{extend}.  Here, $\delta_1$ and $C_1$ are given by 
Lemma \ref{extend}  and depend only on $S$ and $T$.

If $T \setminus S$ contains more than one vertex, we apply this process inductively, where 
in each induction step, we pick a vertex in $T$ which has a common edge with some vertex 
in the subcomplex where $f_{\varepsilon}$ has already been defined (note that in each step, this subcomplex 
is connected). In this way, we get (for small enough $\varepsilon$) a $(C_1' \cdot \varepsilon)$-contracting, 
almost proper extension $\overline{S \cup T^1} \to S^n$ of $f_{\varepsilon}$ where $C_1' > 0$ is 
an appropriate constant which just depends on $S$ and $T$. 

Now, if for $k \geq 3$, we have  a $(C_{1}' \cdot C_2 \cdot \ldots \cdot C_{k-1} \cdot 
\varepsilon)$-contracting (respectively, if $k = 2$, a $(C_1' \cdot \varepsilon)$-contracting) 
almost proper extension $\overline{S \cup T^{(k-1)}} \to S^n$ of $f_{\varepsilon}$, 
we extend this map to a $(C_1' \cdot \ldots \cdot C_k \cdot \varepsilon)$-contracting 
almost proper map $\overline{S \cup T^{(k)}} \to S^n$. This is possible by 
Lemma \ref{extend}, if $\varepsilon$ is small enough (where the smallness just depends on $S$, $T$ and $k$). 

A similar argument works for hypereuclidean and hyperspherical classes. 

If we are dealing with coarse largeness properties, the preceding argument can 
be replaced by the simple observation that the inclusion $\widetilde S \to \widetilde T$ 
is a coarse equivalence, where $\widetilde S = p^{-1}(S)$ is the preimage 
of $S$ under the universal covering map $p : \widetilde X \to X$ and similarly for $T$.

As a further consequence of our argument we notice the following slightly different 
characterisation of large classes which will be important for 
one of our  constructions in the next section.

\begin{prop} \label{raffiniert} Let $X$ be a connected simplicial complex with finitely generated fundamental group and 
let $c \in H_n(X;\Q)$ be a homology class. Then $c$ is enlargeable, if and only if the following 
holds: Choose a finite 
connected $\pi_1$-surjective subcomplex 
$S \subset X$ carrying $c$. Let $C \subset S$ be a finite subcomplex 
carrying $c$ which is not necessarily connected or $\pi_1$-surjective.  Then, for 
any $\varepsilon > 0$, there is a connected cover $p : \overline X \to X$ and an $\varepsilon$-contracting almost 
proper map 
\[
    \overline C \to S^n 
\]
mapping $p^!(c)$ to a nonzero class. Here, $\overline C := p^{-1}(C) \subset \overline S= p^{-1}(S)$ is equipped with 
the restriction of the canonical path metric on $\overline S$. 
\end{prop}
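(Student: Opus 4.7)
The plan is to recognize Proposition \ref{raffiniert} as a direct reformulation of Definition \ref{enlhom}, with each direction following from simple manipulation of the map supplied by the other. In particular, no extension argument like Lemma \ref{extend} is required: both directions reduce to restriction and specialization.

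For the backward direction, I would specialize the hypothesis to $C = S$. Since $S$ is itself a finite subcomplex of $S$ carrying $c$, and the clause ``not necessarily connected or $\pi_1$-surjective'' allows this case, the assumption yields, for every $\varepsilon > 0$, a connected cover $p \colon \overline X \to X$ and an $\varepsilon$-contracting almost proper map $\overline S \to S^n$ sending $p^!(c)$ to a nonzero class. The restricted metric on $\overline C = \overline S$ coincides with the canonical path metric on $\overline S$, so this is exactly the condition of Definition \ref{enlhom}, and hence $c$ is enlargeable.

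For the forward direction, suppose $c$ is enlargeable and fix a pair $(S, C)$ and a number $\varepsilon > 0$. By Definition \ref{enlhom} applied to $S$, there is a connected cover $p \colon \overline X \to X$ and an $\varepsilon$-contracting almost proper map $f_\varepsilon \colon \overline S \to S^n$ with $(f_\varepsilon)_*(p^!(c)) \neq 0$. I would take $g_\varepsilon := f_\varepsilon|_{\overline C} \colon \overline C \to S^n$, where $\overline C = p^{-1}(C) \subset \overline S$ carries the restricted metric. Then $g_\varepsilon$ is $\varepsilon$-contracting because distances in $\overline C$ equal the corresponding distances in $\overline S$, and it is almost proper because if $f_\varepsilon$ is constant outside a compact set $K \subset \overline S$, then $g_\varepsilon$ is constant outside the compact set $K \cap \overline C$.

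The only genuinely homological step is to verify that $g_\varepsilon$ sends the transfer class to a nonzero class, which I expect to be the main (if minor) obstacle. For this I would invoke naturality of the transfer under the proper inclusion $\iota \colon \overline C \hookrightarrow \overline S$ (proper because $\overline C$ is a subcomplex of $\overline S$): a representative simplicial chain for $c$ lying in $C$ lifts simplex-by-simplex to the same locally finite chain when viewed on either side, so $\iota_*\bigl(p^!(c)\bigr) = p^!(c)$ in $H_n^{\rm lf}(\overline S;\Q)$. Consequently $(g_\varepsilon)_*(p^!(c)) = (f_\varepsilon)_* \iota_*(p^!(c)) = (f_\varepsilon)_*(p^!(c)) \neq 0$, completing the plan.
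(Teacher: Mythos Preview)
Your forward direction (enlargeable $\Rightarrow$ condition on $C$) is correct and matches the paper: restrict $f_\varepsilon$ from $\overline S$ to $\overline C$, note that $\overline C$ carries the \emph{subspace} metric from $\overline S$ so the Lipschitz bound is preserved, and use naturality of the transfer. This is exactly what the paper means by ``It is easy to see that the given property is necessary for the enlargeability of $c$.''

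The gap is in your backward direction. You treat the right-hand side of the equivalence as a universal statement over all admissible $C$ and then specialise to $C=S$. That is not the intended reading, and it would strip the proposition of all content. The pair $(S,C)$ is fixed in advance (this is how the proposition is used in the proof of Theorem~\ref{nonenl}, where one takes a \emph{specific} $C=BN$ which is neither $\pi_1$-surjective nor equal to $S$), and the proposition asserts that the condition for \emph{this} $C$ already forces enlargeability. So in the backward direction you are handed, for each $\varepsilon$, an $\varepsilon$-contracting almost proper map on $\overline C$ only, and you must produce one on $\overline S$. You cannot simply choose $C=S$.

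This is precisely where the extension argument enters: the paper's proof says explicitly that for the converse ``we apply the preceding argument'' --- i.e.\ the skeleton-by-skeleton extension based on Lemma~\ref{extend} --- to pass from $\overline C$ to the larger complex $\overline S$, with the minor modification that in the initial step $k=0,1$ one uses the restricted metric from $\overline S$ (resp.\ $\widetilde S$) rather than an intrinsic path metric on $\overline C$, since $\overline C$ need not be connected. Your assertion that ``no extension argument like Lemma~\ref{extend} is required'' is therefore incorrect; the extension lemma is exactly the missing idea in your backward direction.
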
 

\begin{proof} It is easy to see that the given property is necessary for the enlargeability of $c$. 
Conversely, if this condition is satisfied by $C$ we apply the preceding argument in order to show that it 
is also satisfied by the larger subcomplex  $S$. 
The only difference is that in the beginning of the induction (i.e. for $k = 0,1$)
we work with the restrictions of the path metrics from  $\overline S$ to $\overline C$ and from 
$\widetilde S$ to $\widetilde C$ (for the defintition of $d$). Here $\widetilde C := 
\psi^{-1}(C)$, where $\psi : \widetilde X \to X$ is the universal cover.  
\end{proof} 

We remark that analogues of  Proposition \ref{raffiniert} hold for (coarsely) hypereuclidean, (coarsely) 
hyperspherical or macroscopically large classes. 

Next we study functorial properties of large homology classes. 

\begin{prop} \label{funktoreins} Let $X$ and $Y$ be connected simplicial complexes 
with finitely generated fundamental groups and let $\phi  : X \to Y$ be a continuous 
(not necessarily simplicial) map. Then the following implications hold:
\begin{itemize}
   \item If $\phi$ induces a surjection of fundamental groups and $\phi_*(c)$ is 
         enlargeable, then  $c$ is enlargeable.
   \item If $\phi$ induces an isomorphism of fundamental groups and $c$ is enlargeable, 
         then also $\phi_*(c)$ is enlargeable. 
\end{itemize}
If we are dealing with (coarsely) hyperspherical, (coarsely) hypereuclidean or macroscopically
large classes and if we assume  that $\phi$ induces an isomorphism of fundamental groups, 
then $c$ is large in the respective sense if and only if $\phi_*(c)$ is.
\end{prop}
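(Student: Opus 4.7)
The plan is to reduce the proposition to the case of a simplicial inclusion $i : A \hookrightarrow B$ by factoring $\phi$ through its mapping cylinder $Z = Z_\phi$. After choosing a triangulation in which $X$ and $Y$ are both subcomplexes of $Z$, the inclusion $i_Y : Y \hookrightarrow Z$ is a $\pi_1$-isomorphism (being a homotopy equivalence), while $i_X : X \hookrightarrow Z$ has the same effect on $\pi_1$ as $\phi$; since $i_X \simeq i_Y \circ \phi$, one also has $(i_X)_*(c) = (i_Y)_*(\phi_*(c))$ in $H_n(Z;\Q)$. It therefore suffices to prove two ``inclusion lemmas'': that largeness descends across any $\pi_1$-surjective inclusion and ascends across any $\pi_1$-isomorphic inclusion. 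The first bullet then follows by ascent along $i_Y$ (a $\pi_1$-iso) followed by descent along $i_X$ (which is $\pi_1$-surjective under the hypothesis on $\phi$); the second bullet and the coarse statements follow by performing both steps with $\pi_1$-iso inclusions.

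For the inclusion case, fix a finite connected $\pi_1$-surjective subcomplex $S \subset A$ carrying $d \in H_n(A;\Q)$. Since $i$ is $\pi_1$-surjective, $S$ is still finite, connected and $\pi_1$-surjective when regarded in $B$, and it carries $i_*(d)$. For any connected cover $p_B : \overline B \to B$, its pullback $p_A : \overline A \to A$ is again connected, and the preimages $p_A^{-1}(S)$ and $p_B^{-1}(S)$ coincide as coverings of $S$. Two compatibilities make the rest automatic. First, the canonical path metric on $\overline S$ is intrinsic to its simplicial structure and is the same in both ambient descriptions. Second, the natural bijection between lifts of each simplex $\sigma \subset S$ to $\overline A$ and to $\overline B$, induced by the pullback square, identifies $p_A^!(d)$ and $p_B^!(i_*(d))$ as the same formal sum in $H_n^{\rm lf}(\overline S;\Q)$. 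Consequently, one and the same $\varepsilon$-contracting almost proper map $\overline S \to S^n$ (resp.\ to $\R^n$, $B^n$, or with nonzero image in $H\!X_n$) witnesses largeness on either side. This gives the descent step in full generality; when $i$ is a $\pi_1$-isomorphism the assignment $\overline A \mapsto \overline B$ given by the subgroup $i_*(p_{A*}\pi_1(\overline A)) \subset \pi_1(B)$ is well-defined and inverse to the pullback, which yields the ascent step.

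For the coarse variants one runs the analogous scheme on the universal covers $\widetilde X$ and $\widetilde Y$: the $\pi_1$-isomorphism hypothesis implies that the lift $\tilde\phi : \widetilde X \to \widetilde Y$ restricts to a coarse equivalence between $\widetilde S$ and the preimage of $\phi(S)$ in $\widetilde Y$, so the character homomorphism and the locally finite fundamental class transport functorially in both directions. The principal technical obstacle is the matching of transfer classes under the pullback of covers, but this is forced by $\pi_1$-surjectivity of $i$: the quotient $\pi_1(A)/p_{A*}\pi_1(\overline A)$ is carried bijectively onto $\pi_1(B)/p_{B*}\pi_1(\overline B)$ by $i_*$, producing the required bijection of lifts and making the two transfer chains literally equal. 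Once these compatibilities are in place, no further extension argument of the type used to show that largeness is independent of the choice of $S$ is required.
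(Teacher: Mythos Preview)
Your argument is correct and close in spirit to the paper's, but the organization differs in a way worth noting. You uniformly factor $\phi$ through its mapping cylinder and then prove ``inclusion lemmas'' (descent along any $\pi_1$-surjective inclusion, ascent along any $\pi_1$-isomorphic inclusion), exploiting that for an inclusion $i:A\hookrightarrow B$ the subcomplex $S\subset A$ serves simultaneously as the witnessing subcomplex in $B$, with identical preimages and identical intrinsic path metrics in all covers. The paper, by contrast, treats the first bullet directly without reducing to an inclusion: it chooses $S\subset X$ and a finite $\pi_1$-surjective $T\subset Y$ containing $\phi(S)$, observes that $\phi|_S:S\to T$ is $\lambda$-Lipschitz for some $\lambda$ (by compactness), pulls back the cover $\overline Y\to Y$ to $\overline X\to X$, and notes that the induced $\overline\phi:\overline S\to\overline T$ is proper and $\lambda$-Lipschitz, so precomposing the given $\varepsilon$-contracting map $\overline T\to S^n$ with $\overline\phi$ gives a $(\lambda\varepsilon)$-contracting witness. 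Only for the second bullet does the paper invoke the ``replace $Y$ by a homotopy equivalent complex to make $\phi$ an inclusion'' move that you use throughout. Your route is more uniform and avoids the explicit Lipschitz-constant bookkeeping; the paper's route for the first bullet is slightly more direct and makes visible why no $\pi_1$-isomorphism is needed there. Both rest on the same core facts: $\pi_1$-surjectivity forces pullback covers to be connected, and the canonical path metric on $\overline S$ is intrinsic to its simplicial structure.
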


\begin{proof} First, assume that $\phi_*(c)$ is enlargeable and $\phi$ is surjective on $\pi_1$. 
Let $S \subset X$ be a finite connected $\pi_1$-surjective subcomplex carrying $c$. 
Then $\phi(S)$ is contained in a finite connected $\pi_1$-surjective subcomplex $T \subset Y$ which 
carries $\phi_*(c)$. Because $S$ and $T$ are compact, the map $\phi : S \to T$ is Lipschitz   
with Lipschitz constant $\lambda$, say.  

Let  $\varepsilon > 0$ and choose a connected 
cover $p_Y : \overline Y \to Y$ together with an almost proper  $\varepsilon$-contracting map 
$\overline T \to S^n$ mapping $(p_Y)^!(c)$ to a nonzero class. Let $p_X : \overline X \to X$ be the pull back
of this cover under $\phi$. Because $\phi$ is surjective on $\pi_1$, $\overline X$ is connected and 
we get a map of covering spaces
\[
\xymatrix{ 
      \overline S  \ar[r]^{\overline \phi} \ar[d]_{p_X}    &  \overline T  \ar[d]^{p_Y}\\
        S   \ar[r]^{\phi}                   &     T         }                
\]
which restricts to a bijection on each fibre. In particular, 
the map $\overline \phi$ is proper 
and $\lambda$-Lipschitz (as usual, $\overline S$ is equipped with 
the canonical path metric). Hence, 
if $f_{\varepsilon} : \overline T \to S^n$ is an almost 
proper $\varepsilon$-contracting map, then $f_{\varepsilon} \circ \overline \phi$ is almost proper, 
$\lambda \cdot \varepsilon$ contracting and maps $(p_X)^!(c)$ to a nonzero class. 

Now assume that $c$ is large and $\phi$ induces an isomorphism of fundamental groups. 
By the first part of this proof, we can replace $Y$ by a homotopy equivalent complex 
and hence we may assume that $\phi$ is an inclusion. Let $S \subset X$ 
be a finite connected subcomplex carrying $c$. Then $S$ is also a subcomplex of $Y$ 
and it carries $\phi_*(c)$. Because $\phi$ induces an isomorphism on $\pi_1$, 
each connected cover of $X$ can be written as the restriction of a connected cover of $Y$. This 
shows that $\phi_*(c)$ is also enlargeable. 

Again, the other largeness properties are treated in a similar manner. 
\end{proof}

Proposition \ref{funktoreins} implies the important fact that the large classes 
form  a well-defined subset of $H_*(\Gamma ;\Q)=H_*(B \Gamma ; \Q)$ for each finitely generated group $\Gamma$ in the following sense: 
For each simplicial model $X$ of $B \Gamma$, the large classes form a subset of $H_*(X;\Q)$ and if $X$ and 
$Y$ are two models of $B \Gamma$ and $X \to Y$ is the (up to homotopy unique) homotopy equivalence inducing the 
identity on $\pi_1 = \Gamma$, 
then the induced map in homology identifies the large classes in $H_*(X;\Q)$ and $H_*(Y;\Q)$. 

The next corollary states homological invariance of classical largeness properties. 

\begin{cor}\label{manfd} 
Let $M$ be a closed oriented manifold of dimension $n$. Then $M$ is 
large if and only if $\phi_*[M] \in H_n ( B \pi_1(M) ; \Q)$ is large. 
\end{cor}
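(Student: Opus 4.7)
The plan is to deduce the corollary directly from Proposition~\ref{funktoreins} applied to the classifying map $\phi : M \to B\pi_1(M)$, once we know that the metric largeness of the closed Riemannian manifold $M$ agrees with the homological largeness of the class $[M] \in H_n(M;\Q)$ in the sense of Definition~\ref{enlhom}.

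First I would fix a smooth triangulation of $M$, which exhibits $M$ as a finite connected simplicial complex. Taken as a subcomplex of itself, $M$ is trivially $\pi_1$-surjective and carries the fundamental class $[M]$, so it is a legitimate choice for the subcomplex $S$ appearing in Definition~\ref{enlhom}. The canonical path metric coming from this triangulation and any Riemannian metric on $M$ are bi-Lipschitz equivalent, since $M$ is compact; the analogous bi-Lipschitz equivalence holds on each connected cover $\overline{M} \to M$ with constants independent of the cover, because the metric lifts locally isometrically on both sides. From this I would conclude that the definitions of enlargeability, (coarse) hypersphericity, (coarse) hypereuclideanness and macroscopic largeness, as applied to $M$ with its Riemannian metric in Section~\ref{largeriem}, agree with the corresponding notions applied to $[M] \in H_n(M;\Q)$ in Definition~\ref{enlhom}. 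For the coarse properties this is immediate from the fact that bi-Lipschitz equivalent path metrics are in particular quasi-isometric; for the Lipschitz properties one simply absorbs the bi-Lipschitz constant into $\varepsilon$.

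With this identification in hand, the corollary reduces to showing that $[M]$ is large if and only if $\phi_*[M]$ is large. Since $\phi : M \to B\pi_1(M)$ induces by construction an isomorphism on fundamental groups, the second bullet of Proposition~\ref{funktoreins} immediately gives the implication from largeness of $[M]$ to largeness of $\phi_*[M]$ in each of the properties at hand; and, since an isomorphism is in particular a surjection, the first bullet (together with its evident analogues for the remaining largeness properties, already recorded after Proposition~\ref{funktoreins}) gives the reverse implication.

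The main potential obstacle is step two, namely the verification that the metric notions from Section~\ref{largeriem} really coincide with the simplicial notions of Definition~\ref{enlhom} for a closed manifold. The delicate point is that one must check the comparison uniformly across \emph{all} covers $\overline{M}_\varepsilon$ (in the enlargeability case one allows arbitrary, possibly infinite connected covers), which is why it is important that the bi-Lipschitz constants between the simplicial and Riemannian metrics on $M$ lift to the same constants on every connected cover. Once this uniform bi-Lipschitz comparison is in place, all remaining arguments are purely formal and follow from Proposition~\ref{funktoreins}.
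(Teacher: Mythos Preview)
Your proof is correct and follows exactly the same route as the paper: the paper's proof simply states that the corollary follows from Proposition~\ref{funktoreins} together with the fact that $M$ is large if and only if $[M]\in H_n(M;\Q)$ is large. You have merely supplied the details behind that second clause (the bi-Lipschitz comparison of the simplicial and Riemannian metrics, uniform over covers), which the paper leaves implicit.
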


\begin{proof} 
This is implied by Proposition \ref{funktoreins}, 
because $M$ is large, if and only if $[M] \in H_n(M;\Q)$ is large.  
\end{proof} 

The next theorem indicates that the subset of non-large classes should 
actually be our main concern.

\begin{thm}\label{subsp_thm} Let $X$ be a connected simplicial complex with finitely 
generated fundamental group. Then the non-large homology classes 
in $H_n(X;\mathbb{Q})$ form a rational vector subspace.
\end{thm}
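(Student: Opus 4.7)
The plan is to verify the three vector space axioms. Containment of zero follows from the fact that $p^!(0) = 0$ for any cover $p$, so $f_*(p^!(0)) = 0$ for every admissible $f$ and the nonzero image required in Definition \ref{enlhom} can never occur. Closure under scalar multiplication by $\lambda \in \mathbb{Q}$ is immediate from linearity of $p^!$ and $f_*$ together with $\mathbb{Q}$ being a field: for $\lambda \neq 0$ the identity $f_*(p^!(\lambda c)) = \lambda \, f_*(p^!(c))$ shows that $\lambda c$ is large if and only if $c$ is, while $\lambda = 0$ is covered by the previous observation.

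The main obstacle is closure under addition. Given non-large classes $c_1, c_2 \in H_n(X;\mathbb{Q})$, I would first invoke the independence of Definition \ref{enlhom} of the chosen subcomplex (established earlier in the section) to pick a single finite connected $\pi_1$-surjective subcomplex $S \subset X$ that simultaneously carries $c_1$, $c_2$, and therefore $c_1 + c_2$. Arguing by contradiction, assume that $c_1 + c_2$ is large.

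For the largeness properties that do not involve an $\varepsilon$ (hypereuclidean, both coarse variants, and macroscopic largeness), the argument reduces to a single application of linearity: any admissible map $f$ for which $f_*(p^!(c_1 + c_2))$ is nonzero must send at least one of $p^!(c_1)$ and $p^!(c_2)$ to a nonzero class, contradicting the non-largeness of the relevant $c_i$.

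For enlargeability and hypersphericity, where the witnessing map must additionally be $\varepsilon$-contracting for every $\varepsilon > 0$, I set
\[
    A := \{\varepsilon > 0 : \exists \text{ admissible cover } p \text{ and } \varepsilon\text{-contracting } f \text{ with } f_*(p^!(c_1)) \neq 0\},
\]
and define $B$ analogously for $c_2$. The key observation is that $A$ and $B$ are each \emph{upward closed}, because an $\varepsilon$-contracting map is automatically $\varepsilon'$-contracting for every $\varepsilon' > \varepsilon$, and the same cover and map keep working. Linearity combined with the assumed largeness of $c_1 + c_2$ forces $A \cup B = (0,\infty)$, and an upward-closed subset of $(0,\infty)$ is either all of $(0,\infty)$ or has positive infimum; two sets of the latter type cannot cover $(0,\infty)$. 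Hence one of $A, B$ equals $(0,\infty)$, so one of $c_1, c_2$ is large, contradicting the hypothesis. This upward-closedness observation is what reconciles the ``for every $\varepsilon$'' quantifier with $\mathbb{Q}$-linearity and is the only nontrivial point in the argument.
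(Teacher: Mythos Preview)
Your proof is correct and follows essentially the same approach as the paper's: both argue the contrapositive for additivity (if $c_1+c_2$ is large then some $c_i$ is large) via linearity of $f_*\circ p^!$, together with the observation that a witness at scale $\varepsilon$ is also a witness at every larger scale. The paper phrases this last point as a pigeonhole over the discrete sequence $\varepsilon = 1/k$, while you phrase it via upward-closedness of $A$ and $B$ in $(0,\infty)$; these are the same argument, and your separation of the $\varepsilon$-free cases (which the paper leaves to the reader) is a welcome clarification.
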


\begin{proof} The class $0 \in H_n(X;\Q)$ is not large: Take any  connected finite $\pi_1$-surjective 
subcomplex $S \subset X$. Clearly, $S$ carries $0$. By a direct application of Definition \ref{enlhom}
it follows that $0$ is not large. 

It is obvious that if $c \in H_n(X;\Q)$ is not large, then no rational multiple of $c$ is large. 

In order to show that the subset of non-large classes is closed under addition, we need to 
show the following: Let $c , d \in H_n(X;\Q)$ and assume that $c + d$ is large. Then one of $c$ and 
$d$ must also be large. For a proof, let $S \subset X$ be a connected finite $\pi_1$-surjective 
subcomplex carrying $c$ and $d$. Then $S$ also carries $c + d$. Assume that $c+d$ is enlargeable (the 
other largeness properties are easier and left to the reader). 
Let $\varepsilon := \frac{1}{k}$ for 
a natural number $k \geq 1$. Because $c + d$ is enlargeable, there is a connected cover $p : \overline X \to X$ 
and an almost proper $\varepsilon$-contracting map $f_{\varepsilon} : \overline S \to S^n$ mapping 
$p^!(c+d)$ to a nonzero class in $H_n(S^n;\Q)$. This can hold only if either $p^!(c)$ or 
$p^!(d)$ is mapped to a nonzero class. Hence, for infinitely many $k$, either $p^!(c)$ or 
$p^!(d)$ is mapped to a nonzero class (for appropriate covers $p : \overline X \to X$) and consequently 
either $c$ or $d$ is enlargeable. 
\end{proof} 

\begin{defn} If one largeness property $P$ is chosen and $X$ is a connected simplicial complex with finitely 
generated fundamental group, we denote the rational vector subspace of $H_*(X;\Q)$ consisting 
of classes which are not large with respect to $P$ by $H^{\rm sm(P)}_*(X;\Q)$. This is the {\em small homology of $X$ with respect to $P$} and depends a priori on 
the given largeness property $P$. 
\end{defn} 

Theorem \ref{subsp_thm} implies that $0 \in H_n^{\rm sm(P)}(B \Gamma;\Q)$ for each finitely generated 
group and each largeness property $P$. Together with Corollary \ref{manfd} this shows again that large manifolds are 
essential. 

The results of this section leave it  as a central problem to determine $H^{\rm sm(P)}(B \Gamma;\Q)$
 for different largeness properties and finitely generated groups $\Gamma$. This will be the topic of the next section.

\section{Examples and applications} \label{bspiele}

The following theorem illustrates the usefulness of our systemtic approach to large homology 
in Section \ref{essential_and_homol_inv}.

\begin{thm} \label{calculation} 
The small homology of $B \Z^k = T^k$, $k \geq 1$,  is calculated as follows. 
\begin{itemize}
   \item If $P$ denotes enlargeability we have $H_n^{\rm sm(P)}(T^k;\Q) = 0$ for all $n \geq 1$. 
   \item If $P$ denotes (coarse) hypereuclideaness, (coarse) hypersphericity and macroscopic largeness  we have 
   \[
   H_n^{\rm sm(P)}(T^k;\Q) = 
   \begin{cases}
   0 & \textrm{for~} n = k  \\
   H_n(T^k;\Q) & \textrm{for~} 1 \leq n < k  
   \end{cases}. 
   \] 
\end{itemize}
\end{thm}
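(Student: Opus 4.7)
The plan splits along the dichotomy suggested by the two bullets: the enlargeability case exploits the freedom to use arbitrary covers, while the remaining five properties are defined only in terms of the universal covering $p : \R^k \to T^k$. Throughout I restrict to $1 \leq n \leq k$, since $H_n(T^k;\Q) = 0$ otherwise. For the enlargeability bullet the plan is to reduce to the classical enlargeability of $T^n$ (which is flat, hence enlargeable). Given a nonzero class $c \in H_n(T^k;\Q) \cong \Lambda^n \Q^k$, pick an $n$-element subset $I_0 \subset \{1,\dots,k\}$ indexing a nonzero wedge term of $c$ and let $\pi : T^k \to T^n$ be the coordinate projection onto the indices in $I_0$; then $\pi_*(c)$ is a nonzero rational multiple of $[T^n]$. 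Corollary \ref{manfd} upgrades the enlargeability of $T^n$ to that of $[T^n] \in H_n(T^n;\Q)$, Theorem \ref{subsp_thm} transports this along nonzero scalars to $\pi_*(c)$, and Proposition \ref{funktoreins} (applicable because $\pi$ induces a surjection of fundamental groups) transports it back to $c$. This gives $H_n^{\rm sm(P)}(T^k;\Q) = 0$.

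For the remaining five properties and $n = k$, the identity $\R^k \to \R^k$ immediately exhibits $T^k$ as hypereuclidean; the hypereuclidean analogue of Corollary \ref{manfd} propagates this to $[T^k]$, and the diagram at the end of Section \ref{largeriem} propagates it further to all five properties. Since large classes are stable under nonzero scaling and $H_k(T^k;\Q)$ is the one-dimensional space spanned by $[T^k]$, this settles the $n = k$ case.

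The interesting case is $1 \leq n < k$, where the plan is to show that no class is even macroscopically large. For any finite connected $\pi_1$-surjective subcomplex $S \subset T^k$, the deck group $\Z^k$ acts on $\widetilde S := p^{-1}(S)$ freely, properly, and cocompactly with quotient $S$, so the Milnor--\v{S}varc lemma identifies $\widetilde S$ (with its canonical path metric) up to quasi-isometry with $\Z^k$, hence with $\R^k$. Consequently $H\!X_n(\widetilde S;\Q) \cong H\!X_n(\R^k;\Q) = 0$, so the character image of $p^!(c)$ vanishes for every $c \in H_n(T^k;\Q)$; no such class is macroscopically large. The implications collected in the Section \ref{largeriem} diagram then rule out each of the other four properties as well. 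The main technical point I will need to verify carefully is that those implications, originally formulated for Riemannian manifolds, genuinely carry over to the class-based setting of Definition \ref{enlhom} --- in particular the hyperspherical-to-macroscopically-large step, which should follow by adapting Propositions \ref{hypspher-fill} and \ref{fill=macro} to $\widetilde S$ equipped with the transfer class $p^!(c)$ in place of the fundamental class.
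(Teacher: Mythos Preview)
Your proposal is correct but diverges from the paper in two places, and in one of them the paper's route is notably simpler.

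For enlargeability, the paper constructs the cover $\R^n \times T^{k-n} \to T^k$ and the $\varepsilon$-contracting map to $S^n$ by hand, then checks that only the distinguished basis class $t_{1,\ldots,n}$ survives. Your functorial reduction via $\pi : T^k \to T^n$, Proposition~\ref{funktoreins}, and Corollary~\ref{manfd} is a clean alternative that exploits the machinery of Section~\ref{essential_and_homol_inv} instead of repeating a direct construction; it buys brevity at the cost of quoting the classical enlargeability of $T^n$ as input.

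For $1 \leq n < k$ and the five universal-cover properties, the paper's argument is more direct than yours. It simply takes $S = T^k$, so $\widetilde S = \R^k$, and observes via Poincar\'e duality that $H_n^{\rm lf}(\R^k;\Q) \cong H^{k-n}(\R^k;\Q) = 0$. Hence the transfer $p^!(c)$ vanishes already in locally finite homology, and every one of the five defining conditions in Definition~\ref{enlhom} (each of which asks some map to send $p^!(c)$ to a nonzero class) fails at once. Your route through $H\!X_n(\R^k;\Q) = 0$ handles macroscopic largeness cleanly but then forces you to verify that the Section~\ref{largeriem} implications carry over to the class-based setting with $p^!(c)$ in place of a fundamental class --- the very technical point you flag. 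That verification is doable, but the paper sidesteps it entirely by killing $p^!(c)$ one level earlier in the locally finite homology.
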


\begin{proof} We equip $T^k = \R^k / \Z^k$ with  the metric induced from $\R^k$.  
Let $0 \neq c \in H_n(T^k;\Q)$. 
We write
\[
    c = \sum_{1 \leq i_1 <  i_2 <  \ldots < i_n \leq k} \alpha_{i_1, \ldots, i_n} t_{i_1, \ldots, i_n} 
\]
where $t_{i_1, \ldots ,i_n } \in H_n(T^k;\Z)$ is represented by the embedding 
\begin{align*}
    T^n & \to T^k, \\
   (\xi_1, \ldots, \xi_n) & \mapsto (1, \ldots, 1, \xi_{1}, 1, \ldots, 1, \xi_n, 1, \ldots ,1) 
\end{align*} 
of the $n$-torus into $T^k= (S^1)^k$, where $\xi_{\nu}$ is put at the $i_{\nu}$th entry, and each $\alpha_{i_1, \ldots, i_n} \in \Q$. 
Without loss of generality  we 
may assume that $\alpha_{1,\ldots, n} \neq 0$. 

Let $p : \R^n \times T^{k-n} \to T^k$ be the cover associated to the subgroup $\Z^{k-n} = 0 \times \Z^{k-n} \subset \Z^k$ 
given by the last $k-n$ coordinates. For $\varepsilon > 0$, let $f : \R^n \to S^n$ be an $\varepsilon$-contracting 
almost proper map of nonzero degree. Then the composition 
\[
    f_{\varepsilon} : \R^n \times T^{n-k} \stackrel{pr_{\R^n}}{\longrightarrow} \R^n \stackrel{f}{\longrightarrow} S^n
\]
is $\varepsilon$-contracting, almost proper and   
\[
    (f_{\varepsilon})_*( p^!(t_{i_1, \ldots, i_n})) \neq 0 \in \widetilde H_n(S^n;\Q) 
\]
if $(i_1 ,\ldots, i_n) = (1,2, \ldots, n)$ and is zero otherwise. Hence, 
$(f_{\varepsilon})_*(p^!(c)) \neq 0$. This shows that $c$ is enlargeable.

Concerning the other largeness properties it is clear that $H_k^{\rm sm(P)}(T^k;\Q) = 0$, because the  
universal cover $\R^k$ of  $T^k$ shares all of the mentioned largeness properties. Futhermore, notice 
that the transfer maps each non-zero class in $H_k(T^k;\Q)$ to a rational multiple of the locally 
finite fundamental class of $\R^k$.

Now let $1 \leq n < k$ and let $c \in H_n(T^k;\Q)$. The full space $T^k$ carries $c$ and is
$\pi_1$-surjective. However, if $n < k$, then Poincar\'e duality implies 
\[
      H^{\rm lf}_{n}(\R^k;\Q) \cong H^{k-n}(\R^k;\Q) = 0
\]
so that under the universal cover $p : \R^k \to T^k$, the transfer $p^!(c) \in H^{\rm lf}_n(\R^k;\Q)$ is equal 
to zero. This shows that $c$ cannot be large. 
\end{proof} 

Combined with Corollary \ref{manfd} this calculation has the following interesting consequence. 

\begin{thm} \label{interesting} Let $M$ be a closed orientable manifold of dimension $n$ and with fundamental group $\Z^k$, 
where $1 \leq n < k$. Then $M$ is enlargeable if and only if it is essential. However, the 
universal cover of  $M$ is never (coarsely) hypereuclidean, (coarsely) hyperspherical or macroscopically large. 
\end{thm}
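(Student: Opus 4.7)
The plan is to deduce Theorem \ref{interesting} directly from Corollary \ref{manfd} (homological invariance of largeness for closed manifolds) combined with the computation in Theorem \ref{calculation} of the small-homology subspaces of $T^k = B\Z^k$.

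First I would handle enlargeability. By Corollary \ref{manfd}, $M$ is enlargeable if and only if $\phi_*[M] \in H_n(B\Z^k;\Q) = H_n(T^k;\Q)$ is enlargeable, where $\phi:M \to T^k$ is the classifying map. By the first bullet of Theorem \ref{calculation}, $H_n^{\rm sm(P)}(T^k;\Q) = 0$ when $P$ is enlargeability, so the non-enlargeable classes in $H_n(T^k;\Q)$ consist solely of $0$. Consequently $\phi_*[M]$ is enlargeable if and only if $\phi_*[M] \neq 0$, which is precisely the definition of $M$ being essential. This gives the first claim.

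For the second claim, let $P$ denote any one of (coarse) hypereuclideaness, (coarse) hypersphericity, or macroscopic largeness. Again by Corollary \ref{manfd}, the universal cover of $M$ is large in the sense of $P$ if and only if $\phi_*[M]$ is large in the sense of $P$ as an element of $H_n(T^k;\Q)$. But by the second bullet of Theorem \ref{calculation}, for $1 \leq n < k$ we have $H_n^{\rm sm(P)}(T^k;\Q) = H_n(T^k;\Q)$, so \emph{every} class in $H_n(T^k;\Q)$ is small with respect to $P$. In particular $\phi_*[M]$ is not large, and hence the universal cover of $M$ is neither (coarsely) hypereuclidean, (coarsely) hyperspherical nor macroscopically large.

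There is essentially no obstacle here beyond invoking the two cited results; the substantive work was done in establishing Corollary \ref{manfd} (which packages homological invariance of largeness) and Theorem \ref{calculation} (which carries out the actual computation on $T^k$ using the covers $\R^n \times T^{k-n} \to T^k$ and the Poincar\'e duality vanishing $H^{\rm lf}_n(\R^k;\Q) = 0$ for $n < k$). The conclusion illustrates the point of the whole formalism: the largeness of $M$ is read off from where $\phi_*[M]$ sits relative to $H_n^{\rm sm(P)}(B\pi_1(M);\Q)$, and when these subspaces differ for different $P$ one obtains manifolds distinguishing the corresponding largeness notions.
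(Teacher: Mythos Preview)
Your proof is correct and is exactly the argument the paper intends: the paper introduces Theorem \ref{interesting} with the sentence ``Combined with Corollary \ref{manfd} this calculation has the following interesting consequence'' and gives no further justification, so your write-up just spells out that one-line deduction in detail.
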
 

For $4 \leq n < k$ we can construct essential $n$-dimensional manifolds $M$ with 
fundamental group $\Z^k$ as follows. Start with the oriented connected sum
\[
    C := T^n \sharp \big( \sharp^{k-n}  (S^1 \times S^{n-1}) \big) 
\]
of an $n$-torus with $k-n$ copies of $S^1 \times S^{n-1}$.  This manifold has fundamental group $\pi_1(C) = 
\Z^n *  (*^{k-n} \Z)$, the free product of $\Z^n$ with $k-n$ copies of $\Z$. 
Let $C \to B\pi_1(C)$ be the classifying map of the universal cover and consider
the composition $\phi : C \to B\pi_1(C) \to B\Z^k$ induced by the abelianization $\Z^n * (*^{k-n} \Z) \to \Z^k$. 
Then $\phi$ sends the fundamental class of $C$ 
to a non-zero class in $H_n(T^k ; \Z)$. Moreover it  induces a surjective 
map of fundamental groups $\pi_1(C) \to \pi_1(B \Z^k) = \Z^k$ whose kernel 
can be killed by oriented surgeries in $C$ (here we use the assumption $n \geq 4$). 
The resulting manifold $M$ has the stated properties.

Together with Theorem \ref{interesting} this completes the proof of Theorem \ref{example1}.

We will now describe a refined  construction to obtain essential manifolds which are not enlargeable. As
we have to deal with arbitrary covers of the manifolds in question, we need to recall some facts 
from covering space theory. 

Let $X$ be a path connected space and let $S \subset X$ be a path connected subspace. 
We choose a base point $\widetilde x$ in the universal 
cover $\widetilde X$ of $X$ which lies over $S$ and 
compute all fundamental groups with 
respect to $\widetilde x$ and its images in the different covers of $X$. Let $G = \pi_1(X)$, let $H \subset G$ 
be a subgroup and let $\lambda : \pi_1(S) \to \pi_1(X)$ be the map induced by the inclusion $S \subset X$. We denote 
the image of this map by $\Sigma \subset \pi_1(X)$. 
Let $p : \overline X \to X$ be the cover 
associated to $H$. 
Recall that $G$ acts on $\widetilde X$ from the left in a canonical way. 

In view of Proposition \ref{raffiniert} we collect some information on the components 
of $\overline S := p^{-1}(S)$. The projection $\widetilde X \to \overline X$ is denoted by $\psi$. 
If $g_1, g_2 \in G$, then $g_1 \widetilde x$ and $g_2 \widetilde x$ are mapped to the same component of $\overline S$ 
if and only if there are elements $\sigma \in \Sigma$ and $h \in H$ with $g_1 \sigma = h g_2$. Hence 
the components of $\overline S$ are in one to one correspondence with double cosets $H \backslash G / \Sigma$. 
If $P \subset \overline S$ is one component, then there exists a $g \in G$ so that $\psi(g \widetilde x) \in P$. 
With respect to this base point, the fundamental group of $P$ is canonically isomorphic 
to $\lambda^{-1} (g^{-1} H g) \subset \pi_1(S)$. 
Replacing $g \widetilde x $ by $h g \sigma \widetilde x$ gives another point 
whose image lies in $P$, and with respect to this base point the fundamental group of $P$ is equal 
to $\lambda^{-1}(\sigma^{-1} g^{-1} H g \sigma)$. 

If $\lambda$ is injective, we identify $\pi_1(S)$ and $\Sigma$ and 
then the fundamental group of the component $P \subset \overline S$ with respect to the 
base point $\psi(g \widetilde x)$ is equal to $\Sigma \cap (g^{-1} H g)$. If moreover $\Sigma$ 
is normal in $\pi_1(X)$, this group is equal to $g^{-1}(\Sigma \cap H) g$ and so the 
fundamental groups of the components $P \subset \overline S$ are mutually isomorphic. 

Let 
\[
   \Hig := \langle a,b,c,d | a^{-1} b a = b^2, b^{-1} c b = c^2, c^{-1} d c = d^2, d^{-1}a d = a^2 \rangle
\]
be the Higman $4$-group \cite{Hig}. This is a finitely presented group with no proper subgroups of finite index. 
By \cite{BDH} ${\rm Hig}$ is integrally acyclic, i.\,e.\ $\widetilde H_*(\Hig ; \Z) = 0$. Pick a  
$z \in \Hig$ of infinite order. We define 
\[
     K:=  \Hig *_{\langle z \rangle} \rm \Hig
\]
as the amalgamated free product of $\Hig$ with itself along $z$. We claim that the group $K$ still does not possess any proper 
subgroups of finite index. Assume the contrary and let $H < K$ be a proper subgroup of finite index. Then the 
homomorphism from $K$ to the permutation group of $K/H$ induced by the left translation action of $K$ 
is nontrivial and has finite image. But then the push out property of the amalgamated free product implies that also $\Hig$ has a nontrivial 
homomorphism to a finite group, contradicting the fact that $\Hig$ has no proper subgroups of finite index. 

A Mayer-Vietoris argument shows that $\widetilde H_*(K;\Z) = 
\widetilde H_*(S^2; \Z)$. Let $c \in H^2(K;\Z)$ 
be a generator and define 
\begin{itemize}
    \item the group $L$ by the central extension $1 \to \Z \to L \to K \to 1$ classified by $c$  and
    \item  the group $N$ by the central extension $1 \to \Z \to N \to \Z^2 \to 1$ 
              classified by a nontrivial generator of $H^2(\Z^2; \Z) \cong \Z$. 
\end{itemize}
Notice that $N$ is the fundamental group of the closed oriented $3$-manifold arising as the 
total space of the $S^1$-bundle over $T^2$ with Euler number $1$. This can be regarded as
the quotiend $\Nil/N$ of the corresponding $3$-dimensional nilpotent Lie group by the cocompact lattice
$N$. 

We consider $\Z$ as a central subgroup of $N \times L$ via the diagonal embedding and finally set 
\[ 
    G := (N \times L) / \Z \, . 
\]

In the following, we will regard 
\begin{itemize}
   \item $N$ as a normal subgroup of $G$ via the inclusion $N = N \times  1  \subset N \times L \to G$ and 
   \item $\Z$ as a central subgroup of $G$ via the inclusion $\Z \subset N \subset G$. 
\end{itemize}

The following formulates a key property of $G$.

\begin{thm} \label{besonders} Let $H \subset G$ be a subgroup which maps surjectively onto $K$ under the 
canonical map 
\[
   p : G \to N/\Z \times L/\Z \to L / \Z = K \, . 
\]
Then the generator $e \in \Z\subset G$ belongs to $H$. 
\end{thm}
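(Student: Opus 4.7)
The strategy will be a two-step reduction: first (a), show that $H\cap L$ surjects onto $K=L/\Z$; then (b), exploit the nontriviality of the central extension $L\to K$ to force $\Z\subset H\cap L$, giving $e\in H$.

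To set up (a), one observes that in $G$ the subgroups $N$ and $L$ commute, intersect in $\Z$, and together generate $G$; so $G/\Z\cong N/\Z\times L/\Z=\Z^2\times K$, and $p$ is the projection onto the second factor. I would write $\pi:G\to G/\Z$ and set $P:=\pi(H)\subset\Z^2\times K$; by hypothesis $P$ surjects onto $K$. The crucial calculation is that, since $\Z^2$ is abelian, the commutator in $\Z^2\times K$ of any two elements $(a,k),(a',k')\in P$ equals $(0,[k,k'])$, whence $\{0\}\times[K,K]\subset[P,P]\subset P$. Since $K$ has no proper finite-index subgroups (as already proved above), it admits no nontrivial finite quotient and in particular no nontrivial abelian quotient, so $K=[K,K]$. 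Hence $\{0\}\times K\subset P$, and pulling back via $\pi$ (noting that $\pi^{-1}(\{0\}\times K)=L$) yields that every $k\in K$ is the image under $L\to K$ of some element of $H\cap L$.

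For step (b), I would set $D:=H\cap\Z$ and argue by contradiction, supposing $D\neq\Z$, so $D=m\Z$ with $m=0$ or $m\geq 2$. Then $(H\cap L)/D\hookrightarrow L/D$ provides a section of the central extension $1\to\Z/D\to L/D\to K\to 1$, which forces the image of $c$ under the coefficient reduction $H^2(K;\Z)\to H^2(K;\Z/D)$ to vanish. However, the isomorphism $\widetilde H_*(K;\Z)\cong\widetilde H_*(S^2;\Z)$ together with universal coefficients gives $H^2(K;\Z)=\Z$ and $H^3(K;\Z)=0$, so the Bockstein sequence identifies the reduction in cohomology with the surjection $\Z\twoheadrightarrow\Z/m$, sending the generator $c$ to a generator of $H^2(K;\Z/m)\cong\Z/m$ (and for $m=0$ the reduction is the identity on $\Z$). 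In every case $c$ reduces to a nonzero class, contradicting vanishing. Hence $D=\Z$ and $e\in H$.

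The main obstacle is clearly (a): the commutator trick collapses the instant $K$ acquires any nontrivial abelian quotient, in which case $\pi(H)$ could be a proper ``twisted'' subgroup of $\Z^2\times K$ missing the factor $\{0\}\times K$. This is precisely why the construction $K=\Hig*_{\langle z\rangle}\Hig$ was rigged to inherit from the Higman $4$-group the absence of proper finite-index subgroups.
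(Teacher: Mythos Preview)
Your proof is correct and takes a somewhat different, more streamlined route than the paper's.

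The paper argues by analysing the kernel $F=\ker(p|_H)\subset N$ and its image under $\phi:G\to N/\Z=\Z^2$. When $\phi(F)=0$ it shows, using that $K$ admits no nontrivial homomorphism to a finitely generated abelian group, that the image of $H$ in $\Z^2\times K$ is exactly $\{0\}\times K$, hence $H\subset L$, and then the indivisibility of $c$ forces $F=\Z$. When $\phi(F)\neq 0$ it passes to a smaller subgroup of $H$ (still surjecting onto $K$, because $K$ has no proper finite-index subgroups) on which the rank of $\phi(F)$ drops, and iterates until it reaches the first case.

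Your commutator trick bypasses this reduction entirely: since $K$ is perfect, the inclusion $\{0\}\times K=\{0\}\times[K,K]\subset[P,P]\subset P$ holds for \emph{any} subgroup $P\subset\Z^2\times K$ surjecting onto $K$, with no case distinction needed. This gives only $H\cap L\twoheadrightarrow K$ rather than the paper's stronger conclusion $H\subset L$ (in the base case), but that is all step~(b) requires. Your step~(b) is essentially the paper's endgame, rephrased more explicitly via the Bockstein sequence rather than just invoking ``indivisibility of $c$''. Both arguments hinge on the same two facts about $K$: perfection (from the absence of proper finite-index subgroups) and the cohomological computation $H^2(K;\Z)\cong\Z$ with $c$ a generator.
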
 

\begin{proof} Restricting $p$ to the subgroup $H$ leads to an extension 
\[
    1 \to F \to H \to K \to 1 
\]
where $F = \ker p|_H \subset N$.  Let
\[
   \phi : G \to N/ \Z \times L / \Z \to N /  \Z = \Z^2
\]
be the canonical map. 

First, assume $\phi(F)  = 0$. This implies that the canonical map 
\[
    G \to N/ \Z \times L/ \Z = \Z^2 \times K 
\]
sends $H$ to a subgroup $\overline{H}  \subset \Z^2 \times K$ which maps isomorphically to $K$ 
under the projection onto the second factor. Because any homomorphism of $K$ to a finitely 
generated abelian group is trivial (using the fact that $K$ has no proper subgroups of finite index), 
we conclude that the projection of $\overline{H}$ onto $\Z^2$ is trivial. In summary this means that 
$1 \to F \to H \to K \to 1$
is a subextension of $1 \to \Z \to L \to K \to 1$. 
Because $c \in H^2(K;\Z)$ is indivisible, this implies that $F = \Z$ and hence
$e \in H$ as desired. 

Now we consider the case $\rk \phi(F) > 0$. Let  
\[
   \psi :  G \stackrel{\phi}{\to}  \Z^2 \to \Z 
\]
be the composition of $\phi$ with one of the projections $\Z^2 \to \Z$ so that $\psi(F) \neq 0$. 
Then $\psi(F)$ is a finite index subgroup of $\psi(H)$. 
Hence $H' := \psi^{-1}(\psi(F)) \subset H$ still maps surjectively to $K$, as $K$ has no proper subgroups of finite 
index. Because $\psi(H') = \psi(F)$ and $F \subset \ker p$,  this implies that $\ker \psi \subset H$ 
also maps surjectively to $K$. Repeting this argument if necessary this shows that we can 
assume $\phi(F) = 0$ and we are in the case treated before.  
\end{proof} 

\begin{lem} \label{comp} 
We have $H_3(N; \Z) \cong \Z$. Moreover,
the inclusion $N \to G$ induces an isomorphism $ H_3(N;\Z) \cong H_3(G;\Z)$. 
\end{lem}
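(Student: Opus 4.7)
The first assertion is immediate: since $N$ is the fundamental group of the closed oriented aspherical $3$-manifold $\Nil/N$, $BN$ is a model for this Heisenberg nilmanifold and $H_3(N;\Z) = H_3(BN;\Z) \cong \Z$.

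For the second assertion the plan is to compare two Lyndon--Hochschild--Serre spectral sequences associated with $G$. The key preliminary observation is that the subgroups $N$ and $L$ commute elementwise inside $G$, since $G = (N \times L)/\Z$ is a quotient of a direct product by a central subgroup. Consequently the conjugation actions of $K = G/N$ on $N$ and of $\Z^2 = G/L$ on $L$ are both trivial, as they may be computed using lifts in the commuting complementary subgroup. As a preparation I will first compute $H_*(L;\Z)$ via the Gysin sequence for the central extension $1 \to \Z \to L \to K \to 1$: because $\widetilde H_*(K;\Z) \cong \widetilde H_*(S^2;\Z)$ and the extension class $c \in H^2(K;\Z) \cong \Z$ is a generator, cap product with $c$ identifies $H_2(K;\Z)$ with $H_0(K;\Z)$, yielding $H_*(L;\Z) \cong H_*(S^3;\Z)$.

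Next I will apply the Serre spectral sequence to $1 \to L \to G \to \Z^2 \to 1$. The $E^2$-page $H_p(T^2;\Z) \otimes H_q(L;\Z)$ is concentrated in rows $q \in \{0,3\}$ and columns $p \leq 2$; the only differential that could move between the two nonzero rows is $d^4_{p,0} \to E^4_{p-4,3}$, which is identically zero because its source requires $p \leq 2$ while its target requires $p \geq 4$. Hence the spectral sequence collapses at $E^2$ and produces $H_*(G;\Z) \cong H_*(T^2;\Z) \otimes H_*(S^3;\Z)$. In particular $H_*(G;\Z)$ is torsion-free with $H_3(G;\Z) \cong \Z$ and total rank $8$.

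Finally I will examine the Serre spectral sequence for $1 \to N \to G \to K \to 1$, whose $E^2$-page $H_p(K;\Z) \otimes H_q(N;\Z)$ is supported in columns $p \in \{0,2\}$, so it degenerates at $E^3$. The only possibly nonzero differentials are $d^2_{2,q}: H_q(N;\Z) \to H_{q+1}(N;\Z)$ for $q \in \{0,1,2\}$. Since $E^2$ has total rank $12$ while $H_*(G;\Z)$ has rank $8$, the combined image rank of these differentials equals $2$; matching ranks degree by degree against the known $H_n(G;\Z)$ forces $d^2_{2,0}=0$, forces $d^2_{2,1}: \Z^2 \to \Z^2$ to be a rational isomorphism, and forces $d^2_{2,2}: \Z^2 \to \Z$ to vanish rationally. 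The main obstacle will be to upgrade these conclusions integrally: a finite cokernel of $d^2_{2,1}$ would contribute a nonzero torsion subgroup to the torsion-free group $H_2(G;\Z)=\Z$, and a nonzero integral image of $d^2_{2,2}$ would force torsion in $H_3(G;\Z)=\Z$; both being impossible, $d^2_{2,1}$ is an integral isomorphism and $d^2_{2,2}$ vanishes integrally. Consequently $E^\infty_{0,3} = H_3(N;\Z) = \Z$ and $E^\infty_{p,3-p} = 0$ for $p > 0$, so the inclusion-induced edge map $H_3(N;\Z) \to H_3(G;\Z)$ is an isomorphism.
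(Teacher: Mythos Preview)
Your proof is correct and follows essentially the same route as the paper: compute $H_*(L;\Z)\cong H_*(S^3;\Z)$ from the central extension $\Z\to L\to K$, feed this into the spectral sequence for $L\to G\to \Z^2$ to obtain $H_*(G;\Z)$ torsion-free of ranks $1,2,1,1,2,1$, and then analyse the spectral sequence for $N\to G\to K$ to see that $E^2_{0,3}=H_3(N;\Z)$ survives untouched. Your justification that $d^2_{2,1}$ is an integral isomorphism and $d^2_{2,2}$ vanishes integrally (via the torsion-freeness of $H_2(G;\Z)$ and $H_3(G;\Z)$) is more explicit than the paper's, which simply asserts these facts after the rank count; this extra care is welcome and fills in exactly the step a reader might otherwise pause over.
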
 

\begin{proof} The three dimensional closed oriented manifold $\Nil/N$ is a 
model for $BN$. This implies the first assertion. 

The homological spectral sequence for the central extension 
\[
   1 \to \Z \to L \to K \to 1 
\]
shows that $ H_*(L;\Z)\cong \Z$ in degree $0$ and $3$ and $H_*(L;\Z) =0$ otherwise. 
With this information we conclude that the spectral sequence for the normal extension 
\[  
   1 \to L \to G \to N/\Z \to 1 
\]
collapses at the $E^2$-level (recall that $N/\Z = \Z^2$). 
Because the induced action of $N/\Z$ on $L$ is trivial, this 
implies that $H_*(G; \Z)$ is free of rank $1,2,1,1,2,1,0, \ldots$ in degrees $0,1,2,3,4,5,6, \ldots$. 
With this information we go into the spectral sequence for the normal extension 
\[
   1 \to N \to G \to L/\Z \to 1 
\]
and conclude (using that the induced action of $L/\Z$ on $N$ is trivial) that on the 
$E^2$-level the differential 
\[   
   \partial^2 : E^2_{2,1} \to E^2_{0,2} 
\]
is an isomorphism (of free abelian groups of rank $2$) and all other differentials (on $E^2$ or on higher levels) 
are zero. This implies that $E^2_{0,3}= H_0(L/\Z; H_3(N;\Z)) \cong \Z$ cannot be hit by a differential and 
hence the assertion of Lemma \ref{comp}. 
\end{proof} 

\begin{thm} \label{nonenl} 
The homology group $H_3(G;\Q)$ (which is different from zero by Lemma \ref{comp}) consists only of non-enlargeable classes. 
\end{thm}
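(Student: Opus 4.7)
The plan is to show that the generator of $H_3(G;\Q)\cong\Q$, which by Lemma \ref{comp} is the image of $[\Nil/N]$ under the inclusion $N\hookrightarrow G$, is not enlargeable. Fix a simplicial model $X$ of $BG$ and a finite connected $\pi_1$-surjective subcomplex $S\subset X$ containing $\Nil/N$ as a subcomplex whose inclusion induces $N\hookrightarrow G$ on fundamental groups; this is possible since $G$ is finitely presentable, obtained from $\Nil/N$ by attaching $1$- and $2$-cells for a presentation of $G$ relative to $N$. By Proposition \ref{raffiniert} applied to $C:=\Nil/N\subset S$, it suffices to exhibit $\varepsilon_0>0$ such that for every connected cover $p:\overline X\to X$ there is no $\varepsilon_0$-contracting almost proper map $\overline C\to S^3$ sending $p^!(c)$ to a nonzero class, where $\overline C:=p^{-1}(C)$.

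For such a cover with associated subgroup $H\subset G$, the components of $\overline C$ are in bijection with $\overline H\backslash K$ (using that $N$ is normal in $G$ with quotient $K$, since $\ker(G\to K)=N$), and each component is a cover of $\Nil/N$ whose fundamental group is a $G$-conjugate of $F:=H\cap N$. Assume for contradiction that $c$ is enlargeable. I would split into two cases according to whether $\overline H=K$ or not. If $\overline H=K$, Theorem \ref{besonders} yields $e\in H$, hence $\Z\subset F$: every component of $\overline C$ is a cover of $\Nil/N$ to which the central circle lifts as an $S^1$-fibre of fixed length, so for $\varepsilon$ small an $\varepsilon$-contracting map collapses each fibre to a loop of length at most $\varepsilon$ and essentially factors through the $2$-dimensional base of the Seifert fibration; a fibre-integration argument then forces the degree to vanish. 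If $\overline H\neq K$, then $[K:\overline H]=\infty$ since $K$ has no proper subgroups of finite index, so $\overline C$ has infinitely many components. By almost-properness only finitely many of them carry nonzero degree of $f$, and the $\varepsilon$-contracting condition, together with the constant ``value at infinity'' of $f$ propagated through lifts in $\overline C$ of the finitely many extra $1$-cells of $S$ connecting the relevant components to the irrelevant ones, forces the images of the relevant components to shrink to an $\varepsilon$-ball, contradicting the non-vanishing of the degree.

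The main obstacle is the geometric step, especially in the case $\overline H\neq K$. Making rigorous both (a) the intuition that a Seifert-fibred cover with fibre of fixed length is effectively two-dimensional for $\varepsilon$-contracting maps of small $\varepsilon$, and (b) the fact that an almost proper map on a cover with many components must be quantitatively ``mostly constant'' in a way that rules out concentration of degree on finitely many components, requires careful bookkeeping with the combinatorial structure of the cover $\overline C\subset\overline S$ and ultimately exploits the rigidity of the Higman $4$-group $\Hig$ hidden inside $K$.
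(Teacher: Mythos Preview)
Your overall structure---splitting on whether $H$ surjects onto $K$---matches the paper, and your case $\overline H=K$ is essentially Lemma~\ref{referee} (the paper proves it by filling the $S^1$-fibre to a disk and extending the map, rather than by fibre integration, but either works). The genuine gap is in your treatment of the case $[K:\overline H]=\infty$.

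Your argument there says: only finitely many components of $\overline C$ are non-constant, and the $1$-cells of $S\setminus C$ connect a relevant component $P$ to a constant one, so the image $f(P)$ shrinks to an $\varepsilon$-ball. This does not follow. A lift of a single $1$-cell joins one point $x\in P$ to one point $y$ in an adjacent component; it tells you $f(x)$ is $\varepsilon$-close to the constant value, but says nothing about the rest of $f(P)$, and $P$ is an unbounded cover of $\Nil/N$. To bound the whole image you would need every point of $P$ to lie within a fixed distance of a constant component, and your outline gives no mechanism for that. Even if you borrow the paper's uniform-closeness argument, your direction of propagation fails: you would be chaining from $P$ through possibly many non-constant translates before reaching a constant one, and the accumulated distance $k\Delta\varepsilon$ is uncontrolled.

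The paper runs the propagation in the opposite direction, and this is the point you are missing. Using that the generators $\Sigma\subset L$ commute with $N$, it passes to an intermediate cover $S'\to S$ (associated to $N\cap H$) on which $\Sigma$ acts by deck transformations, and shows that for each $\sigma\in\Sigma$ the translate $\sigma\cdot P'$ is \emph{uniformly} within a fixed $\Delta$ (independent of $H$ and $\varepsilon$) of $P'$, point by point. Choosing $\varepsilon\Delta<\pi/2$, the maps $f\circ r|_{\tau_k(P')}$ for successive words $\tau_k=\sigma_1\cdots\sigma_k$ are pairwise $\pi/2$-close and hence homotopic, so by induction \emph{all} of them have the same nonzero degree as on $P$. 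Since the $\tau_k$ can be chosen to hit infinitely many cosets in $\phi(H)\backslash K$, this produces infinitely many components of $\overline C$ on which $f$ is non-constant, directly contradicting almost properness. The key geometric input you are lacking is this uniform $\Delta$-closeness of parallel translates, which comes from the specific structure $G=(N\times L)/\Z$ and the cocompactness of the $N$-action on $\widetilde P$.
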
 

Before we go into the proof, we deal with the following lemma concerning the manifold $C := \Nil / N$ considered before. 
We fix a basepoint $c \in C$. 

\begin{lem} \label{referee} Let $C$ be equipped with some Riemannian metric. 
Then there is an $\varepsilon > 0$ so that the following holds: 
Let $P \to C$ is a connected cover which is equipped with the lifted metric, and $p \in P$ is a point over $c$, 
then an $\varepsilon$-contracting almost
proper map $P \to S^3$ of nonzero degree can only exist, if the image of $\pi_1(P,p)$ in $\pi_1(C,c)$ 
does not contain $e \in \Z \subset N = \pi_1(C,c)$.
\end{lem}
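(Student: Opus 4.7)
The plan is to exploit the $S^1$-bundle structure of $C = \Nil/N$ and to show that, for sufficiently small $\varepsilon$, an $\varepsilon$-contracting almost proper map $f : P \to S^3$ of nonzero degree is incompatible with the hypothesis $e \in \pi_1(P,p)$. Roughly, the hypothesis forces $P$ to inherit a free $S^1$-action whose orbits have uniformly bounded length, so that $f$ can be homotoped through almost proper maps to a map factoring through the two-dimensional quotient $P/S^1$ and hence having degree zero.

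The starting observation is that $C$ is the total space of a principal $S^1$-bundle over $T^2$ with Euler number $1$, where the $S^1 = \R/\Z$-action comes from left multiplication by the center of $\Nil$. Write $L > 0$ for the common length of the circle fibres in the given metric on $C$. Because the central subgroup $\Z \subset N$ commutes with every element of $N$, the lifted $\R$-action on $\Nil$ commutes with every deck transformation and consequently descends to any cover $P \to C$ whose fundamental group $H \subset N$ contains $\Z$. Under the hypothesis $e \in H$ this is the case, so $P$ carries a free $S^1$-action making it a principal $S^1$-bundle $\pi : P \to B$ over a topological two-manifold $B$, and all orbits have the same length $L$.

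Next I would fix $\varepsilon > 0$ so that $\varepsilon L$ is strictly smaller than the convexity radius of $S^3$. For any such $\varepsilon$-contracting map $f : P \to S^3$, the image $f(\pi^{-1}(b))$ of each fibre is then contained in a geodesically convex ball of $S^3$ and hence has a well-defined center of mass $g(b) \in S^3$. The map $g : B \to S^3$ is continuous, and the geodesic interpolation in $S^3$ provides a canonical homotopy from $f$ to $g \circ \pi$. If $f$ is constantly equal to $p_0 \in S^3$ outside a compact set $K \subset P$, then outside the compact saturation $\pi^{-1}(\pi(K))$ both $f$ and $g \circ \pi$ equal $p_0$; on this saturation the geodesic interpolation is continuous, so the homotopy passes through almost proper maps.

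The conclusion is a standard dimension count. The almost proper homotopy gives $f_*[P] = (g \circ \pi)_*[P]$ in $\widetilde H_3(S^3;\Q)$. Because $\pi : P \to B$ is proper with compact fibres and $B$ is two-dimensional, $H^{\rm lf}_3(B;\Q) = 0$ and $\pi_*[P] = 0$; therefore $(g \circ \pi)_*[P] = 0$, contradicting the assumed nonzero degree. The main technical subtlety is to keep the homotopy through almost proper maps, which is precisely why one enlarges $K$ to the compact saturation $\pi^{-1}(\pi(K))$; once the $S^1$-bundle structure on $P$ is in place, the rest of the argument is a straightforward interplay between this bundle structure and the dimension of the base.
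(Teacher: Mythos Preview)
Your argument is correct and takes a genuinely different route from the paper's. Both proofs begin with the same observation: if $e \in \pi_1(P,p)$ then the central $\Z \subset N$ lies in $\pi_1(P)$, so $P$ inherits the structure of a principal $S^1$-bundle $\pi : P \to B$ over a surface $B$ (a cover of $T^2$), with fibres of uniformly bounded length. From this point the two proofs diverge.

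The paper \emph{fills in} the circle fibres: it passes to the associated disk bundle $D(P)$, observes that the pair $(D(P),P)$ is assembled from finitely many cell types coming from the compact pair $(D(C),C)$, and then invokes the extension Lemma~\ref{extend} to extend any sufficiently contracting almost proper map $P \to S^3$ over $D(P)$. Since $P = \partial D(P)$, the degree must vanish. This approach stays within the paper's cell-attachment framework and reuses Lemma~\ref{extend}, and the uniformity of $\varepsilon$ over all covers comes from the finiteness of the cell types in $(D(C),C)$.

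You instead \emph{collapse} the circle fibres: averaging $f$ over each $S^1$-orbit via the Riemannian centre of mass produces a continuous $g : B \to S^3$, and geodesic interpolation gives an almost proper homotopy from $f$ to $g \circ \pi$ (the saturation $\pi^{-1}(\pi(K))$ is compact because $\pi$ has compact fibres). Since $\dim B = 2$ one has $H_3^{\rm lf}(B;\Q) = 0$, forcing the degree to vanish. This is more self-contained and avoids Lemma~\ref{extend} entirely, at the modest cost of invoking the Karcher mean. One small slip: for an arbitrary Riemannian metric on $C$ the circle fibres need not share a \emph{common} length; take $L$ to be the supremum of the fibre lengths (finite by compactness of $C$), and the rest of your argument goes through unchanged.
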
 

\begin{proof} Let $P \to C$ be a connected cover so that $\pi_1(P,p)$, considered as a subgroup of $\pi_1(C,c)$, 
contains the generator $e \in N$. Then $P$ can be written as the total space of the pull back of the bundle 
$S^1 \to C \to T^2$ along a covering map  $\phi: V \to T^2$. 

Let $D^2 \to D(C) \to T^2$ be the disk bundle of $S^1 \to C \to T^2$ and extend the given Riemannian metric on $C$ to $D(C)$. 
Let $D^2 \to D(P) \to V$ be the disk bundle of $S^1 \to P \to V$ equipped with the pull back metric from $D(C)$. 

There is a relative CW-structure on the pair $(D(C), C)$ with finitely many cells attached along 
piecewise smooth maps. Hence the Riemannian manifold $D(P)$ can be obtained from $P$ by 
attaching lifts of the same cells equipped with Riemannian metrics induced from $D(C)$. 

Using Lemma \ref{extend} inductively over the cells in this relative CW-decomposition of $(D(P), P)$ 
we find an $\epsilon > 0$, which is independent of
$\phi$ (i.e. of the specific cover  $P$),  so 
that any almost proper $\varepsilon$-contracting map $P \to S^3$ can be extended to an almost  proper map $D(P) \to S^3$.   This implies
that any $\varepsilon$-contracting almost proper map $P \to S^3$ must be of zero degree. 
\end{proof} 

\begin{proof}[Proof of Theorem \ref{nonenl}.] 

We write $X := BG$. Let $\Nil/N = BN \to X$ be the map induced by the inclusion $N \hookrightarrow G$. 
We assume that $BN$ is a finite simplicial complex and that this map is an inclusion of simplicial complexes. 
By Lemma \ref{comp} it is enough 
to show that the image $c\in H_3(X;\Q)$ of the fundamental class $[BN]$ is not enlargeable. Our proof 
is based on Proposition \ref{raffiniert}. Choose a finite 
connected $\pi_1$-surjective subcomplex $S \subset X$ carrying $c$.  We can assume that  $C:=BN$ is contained in $S$. 
Note that $C$ is not $\pi_1$-surjective. Because $G$ is finitely presented, we can and will assume that the inclusion
 $S \subset X$ induces an isomorphism on $\pi_1$.

Using Lemma \ref{referee} there is a constant $\varepsilon > 0$ 
with the following property: If $P  \to C$ is a connected cover and $P$ is equipped with a metric 
which is dominated by the simplicial path metric, then an $\varepsilon$-contracting almost proper map $P \to S^3$ 
of nonzero degree can only exist, if the fundamental group of $P$ does not contain $e \in \Z  \subset N = \pi_1(C)$. 
We fix an $\varepsilon$ with this property and 
- in view of Proposition \ref{raffiniert} - assume that we are given a connected 
cover $p : \overline X \to X$ and an almost proper $\varepsilon$-contracting map
\[
    f_{\varepsilon} : \overline C \to S^3
\]
so that $(f_{\varepsilon})_*((p|_{\overline C})^!(c)) \neq 0$, where $\overline C := p^{-1}(C)$ is equipped with the metric 
induced from $\overline{S} := p^{-1}(S)$. We can choose a 
component $P \subset \overline C$ so that $(f_{\varepsilon})_*((p|_P)^!(c)) \neq 0$. 

Our aim is to show that $f_{\varepsilon}$ has 
nonzero degree on infinitely many components of $\overline C$ and therefore cannot be almost proper, which contradicts our 
assumptions.  
Roughly speaking this is implied by the fact that $\overline C$ contains a string of 
infinitely many components ``parallel'' to $P$. 

Let $\widetilde x \in \widetilde X$ 
be a base point lying over  $P$ and set $H:= p_*(\pi_1(\overline X)) \subset G$ where we choose images of $\widetilde x$ as the 
base points of $\overline X$ and of $X$. 
If under the projection $\phi : G \to K$
the subgroup $H$ maps surjectively to $K$, then 
it follows from Theorem \ref{besonders} that $e \in N \cap H = \pi_1(P)$. 
By the choice of $\varepsilon$, this 
is impossible and hence $H$ cannot map surjectively onto $K$. Because $K$ has no proper subgroup of finite 
index, we conclude $[K:\phi(H)] = \infty$. The set of double cosets $H \backslash G/ N$ is mapped by $\phi$ to the 
set of cosets $\phi(H)\backslash K$ and is hence infinite. Choose a finite set $\Sigma \subset L \subset G$ mapping
to a set of generators of $K$. Because $[K : \phi(H)] = \infty$, there is a sequence $(\sigma_n)_{n \in \N}$ of elements in $\Sigma$
so that the set $\{ \sigma_{1} \sigma_{2} \ldots \sigma_{k} \, | \, k \in \N\setminus \{0\} \}$ maps to an infinite 
set of cosets in $\phi(H) \backslash K$. 

Now all elements in $\Sigma$ commute with all elements in  $N$. 
However, the elements of $\Sigma$ do not act on $\overline S$, because $\Sigma$ need not be 
in the normalizer of $H=\pi_1(\overline S)$. 
But certainly each $\sigma \in \Sigma$ acts as a deck transformation on the universal cover $\psi : \widetilde S \to S$. 
We set $\widetilde C := \psi^{-1}(C) \subset {\widetilde S}$ and denote by $\widetilde P \subset \widetilde C$ the component 
containing $\widetilde x$. Note that $\widetilde P \to C$ is a universal covering.  Let  $\tau_k : \widetilde S \to \widetilde S$ 
be the deck transformation associated to $\sigma_1 \sigma_2 \ldots \sigma_k$ and set 
$\tau_0 := \id_{\widetilde S}$. Since $\Sigma$ is finite and $G$ acts isometrically on $\widetilde S$,
there is a positive number $\Delta$ so that 
\[
   d_{\widetilde S}(\tau_{k} (x), \tau_{k+1} (x)) = d_{\widetilde S} ( \tau_k(x) , \tau_k(\sigma_{k+1}(x)) = 
   d_{\widetilde S}(x, \sigma_{k+1}(x)) \leq \Delta 
\]
for all $x \in \widetilde P$  and all $k \in \N$. For the last inequality we use the facts that the actions of $N$ and $L$ commute and 
that $\widetilde P$ is invariant under the restricted action of $N$ with compact quotient $C$ (and hence with compact 
fundamental domain). Note that $\Delta$ is independent of
$H$ and of $\varepsilon$. We will henceforth assume that $\varepsilon$ has been chosen so small 
that $\varepsilon \cdot \Delta < \pi/2$.

Let $q : S' \to S$ be the cover associated to the subgroup $N \cap H \subset H$. Under the covering 
map $r: S' \to \overline S$ (recall that the cover $\overline{S} \to S$ was associated to $H \subset G$), 
each component of $C':=q^{-1}(C)$ 
maps bijectively to a component of $\overline C$ by a map of Lipschitz constant $1$ (here 
we use the metrics from $S'$ and $\overline S$, respectively). Let $P'\subset C'$ 
be the component of the image of $\widetilde x$. Now 
each $\sigma \in \Sigma$ acts on the covering $S' \to S$, because $\Sigma$ is contained in the centralizer of $N \cap H$. 
Furthermore we know that $d_{S'}(\tau_{k}(x) ,\tau_{k+1}(x)) \leq \Delta$ for all $x \in P'$ and all $k \in \N$ (this 
uses the fact that the projection $\widetilde S \to S'$ has Lipschitz constant $1$).

The compositions 
\[
        \tau_k(P')  \stackrel{r}{\to} \overline S \stackrel{f_{\varepsilon}} \to S^3
\]
for increasing $k$ have the same degree, different from zero, because for adjacent $k \in \N$ the respective 
maps are   $\pi/2$-close to each other (after identifying $\tau_k (P') = P' = \tau_{k+1}(P')$) 
and for $k = 0$ we see 
the map $P' \to P \to S^3$ which has nonzero degree by assumption. By the choice of the sequence 
$(\sigma_n)$, this means that $f_{\varepsilon}$ is nonconstant on infinitely many components of $\overline C$ and 
therefore not almost proper.  
\end{proof}

Before we can prove Theorem \ref{example2} we need the following extension of Theorem \ref{besonders}. 

\begin{prop} \label{extension2} Let $G= (N \times L)/\Z$ be the group considered in Theorem \ref{besonders}, let $k \in \N$ and let 
\[
    H \subset   G \times \Z^k 
\]
be a subgroup which maps surjectively onto $K$ under the canonical map 
\[
   p : G \times \Z^k \to ( N / \Z \times L / \Z)  \times \Z^k \to L/\Z = K \, . 
\]    
Then $(e,0, \ldots, 0) \in H$ where $e$ is a generator of $\Z \subset N \subset G$ as before.  
\end{prop}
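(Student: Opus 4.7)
The plan is to reduce the statement directly to Theorem~\ref{besonders} applied to the subgroup
\[ H_G := \{ g \in G \mid (g, 0, \ldots, 0) \in H \} \subset G. \]
Once I verify that $H_G$ still maps surjectively onto $K$ under $\bar p : G \to K$, Theorem~\ref{besonders} yields $e \in H_G$, which is precisely the desired conclusion $(e, 0, \ldots, 0) \in H$.

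To verify that $\bar p(H_G) = K$, I would consider the quotient map $\phi := \bar p \times \mathrm{id}_{\Z^k} : G \times \Z^k \to K \times \Z^k$ and set $\bar H := \phi(H)$. The given map $p$ factors as $\phi$ followed by projection onto the first factor, so the hypothesis on $H$ translates into the statement that $\bar H$ surjects onto $K$ via the first coordinate. Because the second factor $\Z^k$ is abelian and central, a direct commutator computation in $K \times \Z^k$ shows that $[\bar H, \bar H]$ is contained in $K \times 0$.

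The key ingredient is then that $K$ is perfect: the paper already establishes $\widetilde H_*(K; \Z) \cong \widetilde H_*(S^2; \Z)$ via a Mayer--Vietoris argument applied to $K = \Hig *_{\langle z \rangle} \Hig$ together with the integral acyclicity of $\Hig$, and in particular $H_1(K; \Z) = 0$. Consequently the surjection $\bar H \to K$ restricts to a surjection $[\bar H, \bar H] \to [K, K] = K$, which together with the inclusion $[\bar H, \bar H] \subset K \times 0$ forces $K \times 0 \subset \bar H$. Unwinding the definition of $\phi$, this means that for every $k \in K$ there exists $g \in G$ with $\bar p(g) = k$ and $(g, 0, \ldots, 0) \in H$, so $H_G \to K$ is surjective and Theorem~\ref{besonders} closes the argument.

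I do not anticipate any serious obstacle: once perfectness of $K$ is isolated as the correct input, the whole proposition becomes a formal reduction to the previously established Theorem~\ref{besonders}. The only mildly delicate point is recognising that a surjection $\bar H \to K$ cannot be ``twisted'' by the $\Z^k$-factor because the commutator subgroup $[\bar H, \bar H]$ automatically lives on the $K$-slice; this is precisely the place where perfectness of $K$ (rather than just the absence of finite-index subgroups, as in Theorem~\ref{besonders}) is used.
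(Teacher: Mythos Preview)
Your argument is correct and takes a genuinely different route from the paper's own proof. The paper does not reduce to Theorem~\ref{besonders} as a black box; instead it re-runs the entire two-case argument of that theorem inside $G \times \Z^k$, replacing the map $\phi: G \to \Z^2$ by $\phi: G \times \Z^k \to \Z^2 \times \Z^k$ and using the absence of proper finite-index subgroups in $K$ to iteratively strip away the abelian directions until $\phi(F) = 0$. Your approach is more modular: by passing to the image $\bar H \subset K \times \Z^k$ and observing that $[\bar H, \bar H]$ both lies in $K \times 0$ and surjects onto $[K,K] = K$, you obtain $K \times 0 \subset \bar H$ in one stroke, which immediately yields that $H_G = \{g \in G : (g,0) \in H\}$ surjects onto $K$ and hence Theorem~\ref{besonders} applies verbatim. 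The gain is that you isolate perfectness of $K$ (already available from $H_1(K;\Z)=0$) as the single structural input and avoid repeating the case analysis; the paper's version, by contrast, keeps the argument parallel to Theorem~\ref{besonders} and relies on the no-finite-index-subgroups property at the reduction step rather than on perfectness directly.
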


\begin{proof} We adapt the proof of Theorem \ref{besonders} accordingly. 

We set $F := \ker p|_H$ and distinguish two cases for $\phi(F) \subset (\Z^2) \times \Z^k$ where 
\[
     \phi : G \times \Z^k \to (N / \Z \times L/ \Z) \times \Z^k \to N/\Z \times \Z^k = \Z^2 \times \Z^k 
\]
is the canonical map.

If $\phi(F) = 0$, then we conclude similarly as before that the image of $H$ in $(N/\Z \times L / \Z) \times \Z^k = (\Z^2 \times K) \times \Z^k$ 
projects onto the trivial subgroup in $\Z^2 \times \Z^k$. Hence we can again regard 
$1 \to F \to H \to K \to 1$ as a subextension of $1 \to \Z \to L \to K \to 1$ 
and the indivisibility of the 
class $c \in H^2(K; \Z)$ implies  $F = \Z \times 0 \subset N \times \Z^k$ as desired. 

If $\phi(F) \neq 0$, then we consider one of the compositions 
\[
   \psi : G \times \Z^k \stackrel{\phi}{\to} \Z^2 \times \Z^k \to \Z 
\]
with $\psi(F) \neq 0$. Arguing as before, the subgroup $\ker \psi \subset H$ still maps surjectively onto $K$. Repeting this 
argument we are reduced to the case $\phi(F) = 0$. 
\end{proof}

\begin{thm} For each $k \geq 0$, we have $H_{3+k}(G \times \Z^k;\Z) \cong \Z$ and $H_{3+k}(G \times \Z^k;\Q)$ 
consists only of non-enlargeable classes. 
\end{thm}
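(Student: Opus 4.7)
The plan is to imitate the proof of Theorem \ref{nonenl}, substituting Proposition \ref{extension2} for Theorem \ref{besonders} and generalising Lemma \ref{referee} to the higher-dimensional setting. For the integral statement one applies Künneth to the computation $H_*(G;\Z)$ of Lemma \ref{comp} together with $H_*(\Z^k;\Z) = \Lambda^*\Z^k$, and identifies the distinguished class produced by the fundamental class of the closed aspherical $(3+k)$-manifold $C := \Nil/N \times T^k$ under the inclusion $BN \times T^k \hookrightarrow B(G \times \Z^k)$; this class is the Künneth product $[\Nil/N] \times [T^k]$ and generates a $\Z$-summand.

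The key technical step is an extension of Lemma \ref{referee} to $C$: for a fixed Riemannian metric on $C$ there exists $\varepsilon_0 > 0$ so that, for every connected cover $q : P \to C$ equipped with the lifted metric, an $\varepsilon$-contracting almost proper map $P \to S^{3+k}$ of nonzero degree with $\varepsilon < \varepsilon_0$ forces the image of $\pi_1(P)$ in $\pi_1(C) = N \times \Z^k$ to avoid $(e, 0, \ldots, 0) \in \Z \subset N$. The proof carries over: if $(e,0,\ldots,0) \in \pi_1(P)$, then $P$ is the pullback of the principal $S^1$-bundle $S^1 \to \Nil/N \times T^k \to T^{2+k}$ (the $T^k$ factor trivial) along some covering $V \to T^{2+k}$; extending the metric to the disk bundle $D(C)$ and inductively applying Lemma \ref{extend} over the finitely many relative cells of $(D(C),C)$ yields an almost proper extension of the map to $D(P)$, forcing the degree to vanish. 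With this in hand, one copies the argument of Theorem \ref{nonenl} line by line: supposing $[C]$ enlargeable, Proposition \ref{raffiniert} produces, for small $\varepsilon$, a cover $p : \overline X \to X := B(G \times \Z^k)$ with $H := p_*\pi_1(\overline X) \subset G \times \Z^k$ and an almost proper $\varepsilon$-contracting map $\overline C \to S^{3+k}$ on a component $P$ of which the degree is nonzero; if $H$ surjected onto $K$ under the canonical $\phi : G \times \Z^k \to L/\Z = K$, Proposition \ref{extension2} would deposit $(e,0,\ldots,0)$ into $H$ and hence into $\pi_1(P) = (N \times \Z^k) \cap H$, contradicting the extended lemma; consequently $[K : \phi(H)] = \infty$, and the commuting action of the deck transformations coming from generators of $L \subset G \times \Z^k$ (which commute with the $N \times \Z^k$-action on the universal cover and preserve distances) produces infinitely many ``parallel'' copies of $P$ in $\overline C$ of the same nonzero degree, contradicting almost properness.

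The main obstacle will be making the generalised Lemma \ref{referee} quantitative, i.e.\ producing an $\varepsilon_0$ and a Lipschitz extension constant independent of the (arbitrary) cover $V \to T^{2+k}$; this requires verifying that the $T^k$-factor contributes only trivial bundle data so that the relative CW-extension has finitely many cells with uniformly bounded geometry. A secondary issue is upgrading the conclusion from the single product class $[\Nil/N] \times [T^k]$ to \emph{every} non-zero class of $H_{3+k}(G \times \Z^k;\Q)$: by Theorem \ref{subsp_thm} it suffices to treat each nontrivial Künneth summand $H_i(G;\Q) \otimes H_{3+k-i}(\Z^k;\Q)$ separately, which is done by the same bundle argument applied to subcomplexes of the form (a cycle representing the $G$-part) $\times T^{3+k-i}$, the crucial point being that Proposition \ref{extension2} continues to apply whenever the fundamental group of the cover surjects onto $K$.
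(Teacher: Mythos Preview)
Your main line --- generalising Lemma \ref{referee} to $C = \Nil/N \times T^k$, substituting Proposition \ref{extension2} for Theorem \ref{besonders}, and rerunning the proof of Theorem \ref{nonenl} --- is exactly what the paper does, and your expansion of the bundle-extension step is correct.

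There is, however, a genuine issue with the first assertion of the theorem, which your K\"unneth sketch does not resolve. The proof of Lemma \ref{comp} shows that $H_*(G;\Z)$ is free of ranks $1,2,1,1,2,1,0,\ldots$; K\"unneth then gives, for instance, $H_4(G \times \Z;\Z) \cong H_4(G) \oplus H_3(G) \cong \Z^3$, not $\Z$. So the claim $H_{3+k}(G \times \Z^k;\Z) \cong \Z$ is false for $k \geq 1$, and your phrasing ``generates a $\Z$-summand'' suggests you already sensed this. Consequently your ``secondary issue'' is the heart of the matter rather than a technicality: the product-class argument handles only the summand $H_3(G;\Q) \otimes H_k(\Z^k;\Q)$, and your proposed extension to the summands with $i = 4,5$ is not worked out --- those classes are carried by cycles coming from $BL \subset BG$ rather than from $\Nil/N$, so the disk-bundle filling of Lemma \ref{referee} does not apply without further argument.

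The paper's own proof is equally terse and does not address this point either. What both arguments actually establish is that the single class $[\Nil/N] \times [T^k]$ is non-enlargeable, and this is precisely what the application to Theorem \ref{example2} requires; the stronger assertions in the theorem statement appear to be an oversight.
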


\begin{proof}  The proof is analogous to the one of Theorem \ref{nonenl}. The main difference is that $X$ is 
replaced by $B(G \times \Z^k) = BG \times T^k$, the subgroup $N$ by $N \times \Z^k$  and $C$ by 
the subcomplex $\Nil/N \times T^{k} \subset BG \times T^k = X$. In this context, Lemma \ref{referee}, with 
$e \in \Z \subset N$ replaced by $(e, 0 , \ldots, 0) \in \Z \times \Z^k \subset N \times \Z^k$ and 
$S^3$ replaced by $S^{3 +k}$, remains valid so that 
Proposition \ref{extension2} allows us to adopt the proof of Theorem \ref{nonenl}. 
\end{proof}

For the proof of Theorem \ref{example2}, let $n \geq 4$. We can construct an essential $n$-dimensional 
manifold $M$ with fundamental group $G \times \Z^{n-3}$ by first representing the homology class 
$[\Nil/N] \times [T^{n-3}] \in H_{n}(B (G \times \Z^{n-3}) ; \Q)$ 
as $\phi_*[M]$ with a closed oriented manifold $M$ and some map $\phi : M \to B (G \times \Z^{n-3})$ and then improving the map $\phi$ 
by surgery so that it induces an isomorphism on $\pi_1$ (without changing $\phi_*[M]$). This 
is possible by the definition of $G$ and because $L$ is finitely presented. The previous corollary says that 
the class $[\Nil/N] \times [T^{n-3}]$
is not enlargeable. Now the claim follows from Corollary \ref{manfd}.

\section{Higher enlargeability implies essentialness}\label{higher_enl}

This section contains the proof of Theorem \ref{intro_p4_higher_enl_impl_ess}. We argue by induction, 
where the inductive argument relies on a bound, shown by Gromov, on the radius of a submanifold by its volume.

The notion of $k$-dilation extends in an obvious way to piecewise smooth maps of simplicial complexes with 
piecewise smooth Riemannian metrics. For triangulated Riemannian manifolds this new definition 
coincides with Definition \ref{dilation}. Moreover the inequality 
\[ 
  \dil_l(f)^{1/l } \leq \dil_k(f)^{1/k} 
\]
for $l\geq k$ remains valid.
 
\begin{defn} \label{higherenl} Let $X$ be a connected finite simplicial complex equipped with smooth Riemannian metrics on 
each of its simplices. Let $c \in H_n(X;\Q)$ be a homology class. The class $c$ is 
called {\em $k$-enlargeable}, if for every $\varepsilon>0$
there exists a connected cover $p : \overline X_\varepsilon \to X$ and a piecewise smooth almost proper map 
$f_\varepsilon: \overline X_\varepsilon  \to S^n$ with $k$-dilation at most $\varepsilon$ 
and satisfying $(f_{\varepsilon})_*(p^!(c)) \neq 0$.
\end{defn}

Contrary to Definition \ref{enlhom} we allow metrics on the simplices in $X$ 
which are different from the standard metrics. This flexibility is 
convenient for the following argument. 
By the compactness of $X$, Definition \ref{higherenl} is independent of the choice 
of the Riemannian metric on the simplices of $X$.

\begin{prop}\label{extension}
Let $X$ be a connected finite simplicial complex, and let $l \leq n$ be positive integers. Let 
$c \in H_n(X;\Q)$ be $l$-enlargeable. Let $X'$ be obtained from $X$ 
by attaching  
finitely many $(l+1)$-cells (i.e. simplicial $(l+1)$-balls) to the $l$-skeleton $X^{(l)}$ of $X$. 
Then the image of $c$ in $H_n(X';\mathbb{Q})$ is $(l+1)$-enlargeable.
\end{prop}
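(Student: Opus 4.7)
The plan is to lift the new $(l+1)$-cells to the cover furnished by $l$-enlargeability of $c$ and to extend the given map $f_{\varepsilon'}$ across them by a coning construction whose cost in $(l+1)$-dilation is controlled by the $l$-dilation on the boundary. First I fix piecewise smooth Riemannian metrics on the simplices of $X'$ extending those on $X$, and let $\alpha_1, \ldots, \alpha_m : S^l \to X^{(l)}$ denote the attaching maps of the new cells, with $l$-dilations bounded by a constant $A$ depending only on $X'$. Given $\varepsilon > 0$, I apply $l$-enlargeability with a parameter $\varepsilon' > 0$ (to be specified) to produce a connected cover $p : \overline X \to X$ together with an almost proper piecewise smooth map $f_{\varepsilon'} : \overline X \to S^n$ satisfying $\dil_l(f_{\varepsilon'}) \leq \varepsilon'$ and $(f_{\varepsilon'})_*(p^!(c)) \neq 0$. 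For $l \geq 2$ each attaching map is $\pi_1$-trivial, so $p$ extends to a connected cover $p' : \overline X' \to X'$ by adjoining one $(l+1)$-cell per preimage of the basepoint of each attaching map, and $\overline X$ sits inside $\overline X'$ as a subcomplex. By naturality of the transfer, $(p')^!(c')$ coincides with the image of $p^!(c)$ under this inclusion, where $c'$ denotes the image of $c$ in $H_n(X'; \Q)$.

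The key input is the following extension lemma: there exist constants $\delta_0, C_0 > 0$ depending only on $l$, $n$ and on the fixed metrics on $D^{l+1}$ and $S^l$, such that every piecewise smooth map $g : S^l \to S^n$ with $\dil_l(g) \leq \delta_0$ admits a piecewise smooth extension $G : D^{l+1} \to S^n$ with $\dil_{l+1}(G) \leq C_0 \cdot \dil_l(g)$. I would establish it using the bound, mentioned in the preamble to this section, on the radius of a closed $l$-submanifold of $S^n$ in terms of its $l$-volume: since $\dil_l(g)$ controls the $l$-volume of the image, for $\dil_l(g)$ sufficiently small the image $g(S^l)$ lies inside a metric ball $B_r \subset S^n$ with $r \to 0$ as $\dil_l(g) \to 0$. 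Transport $B_r$ to Euclidean coordinates via the exponential map (bi-Lipschitz with constant close to $1$), define $\widetilde G(tx) = t \cdot \widetilde g(x)$ by linear coning to the center of the ball for $t \in [0,1]$ and $x \in S^l$, and push the result back to $S^n$. A direct computation of $\|\Lambda^{l+1} D\widetilde G\|$ shows that $\dil_{l+1}(\widetilde G)$ is bounded by a universal constant times $r \cdot \dil_l(\widetilde g)$, which yields the claimed bound.

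Assembling the extension is then routine. The lifted attaching map $\widetilde \alpha : S^l \to \overline X$ of each lifted cell has the same $l$-dilation as the corresponding $\alpha_i$ because $p$ is a local isometry on simplices, so $f_{\varepsilon'} \circ \widetilde \alpha$ has $l$-dilation at most $A \varepsilon'$. Provided $A \varepsilon' \leq \delta_0$, the extension lemma produces an extension of $(l+1)$-dilation at most $C_0 A \varepsilon'$ on each relevant cell. Since $f_{\varepsilon'}$ is constant outside a compact set $K \subset \overline X$, and the family of lifted attaching spheres is locally finite in the locally finite complex $\overline X$, only finitely many of them meet $K$; on the remaining cells $f_{\varepsilon'} \circ \widetilde \alpha$ is constant and I extend by that constant value. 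The resulting piecewise smooth map $f'_{\varepsilon} : \overline X' \to S^n$ is almost proper, restricts to $f_{\varepsilon'}$ on $\overline X$, and therefore satisfies $(f'_{\varepsilon})_*((p')^!(c')) = (f_{\varepsilon'})_*(p^!(c)) \neq 0$. On the new cells the $(l+1)$-dilation is at most $C_0 A \varepsilon'$; on the simplices of $\overline X$ the inequality $\dil_{l+1}^{1/(l+1)} \leq \dil_l^{1/l}$ recalled at the start of the section yields $\dil_{l+1}(f'_{\varepsilon}|_{\overline X}) \leq (\varepsilon')^{(l+1)/l}$. Taking $\varepsilon'$ sufficiently small makes both bounds at most $\varepsilon$.

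The main obstacle is the extension lemma: establishing the quantitative form of Gromov's radius-by-volume bound in $S^n$ and running a coning argument that respects the piecewise smooth structure with uniform constants require care. Once that lemma is in place, the remainder reduces to bookkeeping with the naturality of the transfer and the locally finite structure of the cover.
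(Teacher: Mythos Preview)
Your overall strategy coincides with the paper's, but the argument you sketch for the extension lemma contains a genuine gap. You assert that when $\dil_l(g)$ is small the image $g(S^l)$ lies in a metric ball $B_r \subset S^n$ with $r \to 0$; for $l \geq 2$ this is false. A smooth map $S^2 \to S^3$ factoring through a geodesic arc has $\dil_2 = 0$ yet image of diameter $\pi$, and in general small $l$-dilation bounds $l$-volume but says nothing about diameter. Gromov's inequality (Lemma~\ref{bound_on_Rad}) does not give a small containing ball; it produces a map $f' : S^l \to S^n$ into an $(l-1)$-\emph{dimensional subcomplex} with $d(g(x), f'(x)) \leq C_n \Vol_l(g(S^l))^{1/l}$ for all $x$. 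The paper's extension exploits exactly this: the attached $(l+1)$-cell is modelled as two cylinders and a hemispherical cap, where the first cylinder carries $g$ via projection (image $l$-dimensional, so $\dil_{l+1}=0$), the second sends $\{x\}\times[0,1]$ to the short geodesic from $g(x)$ to $f'(x)$ (this is the piece contributing the controlled $(l+1)$-dilation, bounded by $\delta \cdot C_n(\delta v)^{1/l}$), and the cap maps into a cone over the $(l-1)$-dimensional target of $f'$ (again $l$-dimensional image, $\dil_{l+1}=0$). Your linear coning to a single point is only justified for $l=1$, where small length genuinely forces small diameter.

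A minor further omission: for $l = n$ the existence of any extension requires the separate observation that small $\dil_n$ makes $g : S^n \to S^n$ non-surjective, hence nullhomotopic; the paper treats this case on its own, after which every extension automatically has $\dil_{n+1}=0$.
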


Note that in contrast to the extension Lemma \ref{extend} (which corresponds to the case $l=1$
in Proposition \ref{extension}) and the subsequent argument, 
we must - at least for $l \geq 2$ - pass from the $l$-dilation to the $(l+1)$-dilation 
upon attaching $(l+1)$-cells.

In the proof we will need the following lemma.

\begin{lem}\label{bound_on_Rad}
There exists a constant $C_n>0$ depending only on $n$ such that for any piecewise smooth map $f:N\to S^n$ from an 
$l$-dimensional manifold $N$ with $l<n$ to the unit $n$-sphere there is a piecewise smooth map $f':N\to S^n$ to an 
$(l-1)$-dimensional 
subcomplex of $S^n$ such that $d(f(x),f'(x))\leq C_n\cdot\Vol_l(f(N))^{1/l}$ for all $x\in N$.
\end{lem}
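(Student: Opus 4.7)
The plan is a Federer--Fleming style deformation of $f$ into the $(l-1)$-skeleton of a suitable triangulation of $S^n$. Set $V := \Vol_l(f(N))$. The case $V^{1/l} \geq 1$ is trivial: choose $f'$ to be any constant map (valued in a vertex of a triangulation, which is a $0$-dimensional hence $\leq (l-1)$-dimensional subcomplex) and absorb $\diam(S^n) = \pi$ into $C_n$. So we may assume $V^{1/l}$ is small.

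Fix a dimension-dependent constant $\alpha_n > 0$ to be chosen below, let $r := \alpha_n V^{1/l}$, and pick a smooth triangulation $T$ of $S^n$ of bounded geometry (its simplices uniformly bi-Lipschitz to the standard $n$-simplex) and mesh $r$. The first main step is to push $f$ into the $l$-skeleton $T^{(l)}$ in $n-l$ stages, decreasing the ambient skeletal dimension by one at each stage. For this I perturb $f$ slightly so that its image avoids the barycenter of each top-dimensional simplex (possible by transversality since $\dim N = l < n$), then radially project from each such barycenter onto the boundary of the corresponding simplex; I repeat on the $(n-1)$-skeleton, and so on down to $T^{(l)}$. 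Each projection displaces a point by at most the diameter of the ambient simplex, so the resulting map $f_0 : N \to T^{(l)}$ satisfies $d(f(x), f_0(x)) \leq C_n' \, r$ for a dimension-dependent constant $C_n'$.

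For the second main step, I push $f_0(N) \subset T^{(l)}$ into the $(l-1)$-skeleton $T^{(l-1)}$ simplex by simplex. For each $l$-simplex $\Delta$ of $T$ one has $\Vol_l(f_0(N) \cap \Delta) \leq C\,V$ while $\Vol_l(\Delta) \geq c\, r^l = c\, \alpha_n^l\, V$; choosing $\alpha_n$ so that $C < c \alpha_n^l$, there exists a point $p_\Delta \in \Delta^\circ \setminus f_0(N)$. Radial projection from $p_\Delta$ onto $\partial\Delta$ sends $f_0(N) \cap \Delta$ into $T^{(l-1)}$ and displaces points by at most $\diam(\Delta) \leq r$. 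Combining both steps yields the desired map $f' : N \to T^{(l-1)}$ with $d(f(x), f'(x)) \leq (C_n' + 1)\, r = (C_n' + 1)\alpha_n \cdot V^{1/l}$, and we set $C_n := (C_n' + 1)\alpha_n$.

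The main obstacle is the first step: when iterating radial projections, one must ensure after each stage that the pushed image again avoids the barycenters of the simplices in the next lower skeleton. This is handled by applying the transversality perturbation inductively at each stage, with the total perturbation kept far below the mesh $r$ so that it does not affect the displacement estimate. A subsidiary issue is formalizing the averaging (Fubini) argument needed to select a ``good'' center $p_\Delta$ in the second step; but for the displacement estimate itself any interior point outside $f_0(N) \cap \Delta$ suffices, which is why only the crude volume comparison $\Vol_l(f_0(N) \cap \Delta) < \Vol_l(\Delta)$ is needed.
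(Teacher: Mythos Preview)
Your strategy is exactly the Federer--Fleming deformation that the paper defers to (it only cites Gromov's Proposition 3.1.A in \emph{Filling Riemannian manifolds} and says the lemma follows analogously), so in spirit you are doing precisely what the paper has in mind.

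There is, however, a genuine gap in your execution of the first step. You project from the \emph{barycenters} of the top simplices after a tiny transversality perturbation making $f(N)$ miss those single points. This makes the projection well-defined, but it does not control the $l$-volume of the pushed image: the radial projection $\pi_b:\Delta\setminus\{b\}\to\partial\Delta$ has derivative of size roughly $\mathrm{dist}(b,\partial\Delta)/\mathrm{dist}(x,b)$, which blows up as $x\to b$. If $f(N)$ passes close to $b$ (which your perturbation does not prevent), then $\Vol_l(\pi_b(f(N)\cap\Delta))$ can be arbitrarily large. After $n-l$ such stages you have no bound on $\Vol_l(f_0(N))$, so the inequality $\Vol_l(f_0(N)\cap\Delta)\leq C\,V$ that you invoke in the second step is unjustified, and with it the existence of the missing point $p_\Delta$.

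The fix is the point you relegated to a ``subsidiary issue'': the Fubini averaging is needed already in step one, not only in step two. At each stage and in each simplex $\sigma$ one integrates the projected $l$-volume over all centers $p$ in a fixed interior ball of $\sigma$; since $|x-p|^{-l}$ is locally integrable in dimension $>l$, the average projected volume is at most a dimensional constant times the incoming volume, so some center gives controlled volume. Choosing centers this way at every stage yields $\Vol_l(f_0(N))\leq C(n)\,V$, after which your second step goes through verbatim. This is precisely how Gromov's argument runs; once you move the averaging into step one, your write-up becomes a correct proof along the paper's intended lines.
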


Gromov \cite[Proposition 3.1.A]{Gromov(1983)} showed a similar statement for the Euclidean space instead of the unit sphere. 
The above lemma may be proved in an analogous manner or can easily be deduced from Gromov's result.

\begin{proof}[Proof of Proposition \ref{extension}]
Let $g$ be a Riemannian metric on $X$. Since the attached cells do not interfere with each other we may assume
that there is only one $(l+1)$-cell to attach. Let $h:S^l \to X$ be the 
(simplicial) attaching map. Choose a lift $\widetilde h:S^l\to \widetilde X$ to the universal covering, 
and denote the $l$-dimensional volume of the image $\widetilde h(S^l)$ by $v$. 

First assume $l<n$. Let $\varepsilon>0$. Choose $\delta>0$ such that $C_n \cdot v^{1/l} \cdot 
\delta^{\frac{l+1}{l}} \leq \varepsilon$ 
and such that $\delta^{\frac{l+1}{l}}\leq\varepsilon$. Let $f_\delta: \overline X_\delta \to S^n$
be almost proper with $(f_{\delta})_*(\overline c) \neq 0$ and $\dil_l(f_\delta)\leq \delta$, where $\overline c$ is
 the transfer of $c$ to $\overline{X}_\delta$. 
Note that 
\[ 
\dil_{l+1}(f_\delta) \leq \dil_l (f_\delta)^{\frac{l+1}{l}} \leq \delta^{\frac{l+1}{l}} 
\leq \varepsilon. 
\]

We extend the given Riemannian metric $g$ on $X$ over the attached $(l+1)$-cell as follows: Think of $X'$ as
\[ 
       X \;\;\;\bigcup_h\;\;\; S^l\times [-1,0] \;\;\;\bigcup\;\;\; S^l\times[0,1] \;\;\;\bigcup\;\;\; S^{l+1}_+ 
\]
and define the metric by taking
\[ 
       g, \;\;\; (-t h^* g + (1+t) g_r) +  dt^2,\;\;\; g_r  +  dt^2,\;\;\; g_r 
\]
on the respective parts. Here, $g_r$ is the round metric of radius $r$ on $S^l$, respectively on the hemisphere 
$S^{l+1}_+$, 
with $r$ chosen so that $h:(S^l,g_r)\to (X,g)$ is $1$-contracting.

Moreover, we extend $f_\delta$ over the attached cells as follows: on $S^l\times [-1,0]$ we use the projection to 
$S^l$ and apply the composition $f_\delta \circ \overline{h}$ where $\overline{h}: S^l  \to \overline{X}_{\delta}$ is an 
appropriate lift of $h$. The $l$-dimensional volume of
 $f_\delta\circ \overline{h}(S^l)$ is at most $\delta \cdot  v$. By Lemma \ref{bound_on_Rad} there is a map $f':S^l\to S^n$ to an
 $(l-1)$-dimensional subcomplex of the $n$-sphere such that $d(f',f_\delta\circ \overline{h})\leq C_n (\delta \cdot v)^{1/l}$. 
The cylinder lines $\{x\}\times [0,1]$ are mapped to minimizing geodesics from $f_\delta\circ \overline{h}(x)$ to $f'(x)$. For 
small enough $\delta$ this map is  well-defined. The remaining cap $S^{l+1}_+$ may be regarded as the 
cone over $S^l\times \{1\}$ and is mapped to some cone over the $(l-1)$-dimensional subcomplex to which 
$S^l\times \{1\}$ is mapped.

By the choice of $\delta$, this new map has $(l+1)$-dilation at most $\varepsilon$: on $S^l\times [-1,0]$ 
and the cap $S^{l+1}_+$, because they are mapped to $l$-dimensional subcomplexes, which 
are zero sets for the $(l+1)$-dimensional
 volume, and on $S^l\times [0,1]$ because the $l$-dimensional volume of the first factor is multiplied by a factor of at most 
$\delta$ and the second factor is $C_n (\delta \cdot v)^{1/l}$-contracted. Hence the 
image of $c$ in $H_n(X';\mathbb{Q})$ is $(l+1)$-enlargeable.

Finally assume $l=n$. For any $\varepsilon>0$ satisfying $\varepsilon v < \Vol_n(S^n)$ the composition 
with $f_\varepsilon$ of any lift $\overline{h}$ of the attaching map is not surjective. Hence, 
$f_\varepsilon\circ \overline{h}$ is nullhomotopic and we may extend $f_\varepsilon$ over $\overline X_\varepsilon$
with the new cells attached.  
Since $S^n$ is $n$-dimensional, every map to it has zero $(n+1)$-dilation.
\end{proof}

Next, we will show Theorem \ref{intro_p4_higher_enl_impl_ess} by an inductive argument. In this proof, 
Proposition \ref{extension} will serve as the induction step.

\begin{proof}[Proof of Theorem \ref{intro_p4_higher_enl_impl_ess}]
Let $M$ be $k$-enlargeable, and let  $\pi_i(M)$ be trivial for $2 \leq i\leq k-1$. Then it is possible to 
construct $B\pi_1(M)$ from $M$ by attaching only cells of dimension at least $k+1$. We may assume 
that the image of the attaching 
map of every $l$-cell lies in the $(l-1)$-skeleton. 

If $M$ is not  essential, then there is a finite subcomplex $X\subset B\pi_1(M)$ 
containing $M$ such that $[M]=0$ 
in $H_n(X;\mathbb{Q})$. We may assume that $X$ is of dimension $n+1$. Then by an induction using Proposition \ref{extension},
 the class $[M] \in H_n(X;\mathbb{Q})$ is $(n+1)$-enlargeable. 
But this contradicts 
the fact that $[M]$ vanishes in $H_n(X;\Q)$. Therefore, $M$ has to be  essential. 
\end{proof}

\end{document}